\newcommand{\C}{\mathbb{C}}
\newcommand{\N}{\mathbb{N}}
\renewcommand{\P}{\mathbb{P}}
\newcommand{\card}{\mathrm{card}}
\numberwithin{equation}{section}
\newtheorem{theorem}{Theorem}[section]
\newtheorem{lemma}[theorem]{Lemma}
\newtheorem{corollary}[theorem]{Corollary}
\newtheorem{definition}[theorem]{Definition}
\newtheorem{example}[theorem]{Example}
\title[Difference analogue of Cartan Second Main Theorem]{Difference analogue of Cartan's Second Main Theorem for slowly moving periodic targets}
\author[R. Korhonen, N. Li,  and K. Tohge ]
{Risto Korhonen*, Nan Li, and Kazuya Tohge}
\address{Risto Korhonen \newline
Department of Physics and Mathematics,
University of Eastern Finland,
P. O. Box 111, 80101 Joensuu, Finland.}
\email{risto.korhonen@uef.fi}
\address{Nan Li \newline
University of Jinan, School of Mathematical Sciences, Jinan, Shandong, 250022, P.R. China
\newline
and \newline
Department of Physics and Mathematics,
University of Eastern Finland,
P. O. Box 111, 80101 Joensuu, Finland.}
\email{nanli32787310@163.com}
\address{Kazuya Tohge \newline
School of Electrical and Computer Engineering,
Kanazawa University,
Kakuma-machi, Kanazawa, 920-1192, Japan.}
\email{tohge@se.kanazawa-u.ac.jp}
\thanks{This work was supported by the Academy of Finland( Grant No. 268009 \& No. 268009), the NNSF of China (Grant No. 11171013 \& No. 11371225) and the Japan Society for the Promotion of Science Grant-in-Aid for Scientific Research (C) \#22540181 and \#25400131.}%\\
\thanks{*Corresponding Author: Risto Korhonen; Corresponding Email: risto.korhonen@uef.fi}
\subjclass[2010]{Primary 32H30; Secondary 30D35.}
\keywords{entire function; meromorphic function; holomorphic curve; Casorati determinant; Nevanlinna theory; Cartan's second main theorem}
\begin{document}

\begin{abstract}
We extend the difference analogue of Cartan's second main theorem for the case of slowly moving periodic hyperplanes, and introduce two different natural ways to find a difference analogue of the truncated second main theorem. As applications, we obtain a new Picard type theorem and difference analogues of the deficiency relation for holomorphic curves.
\end{abstract}

\maketitle

\section{Introduction}

In 1933, Cartan \cite{cartan:33} obtained a generalization of the second main theorem to holomorphic curves. Cartan's result is a natural extension of Nevanlinna's second main theorem for the $n$-dimensional complex projective space, and it has, somewhat surprisingly, turned out to be a powerful tool for important problems in the complex plane as well. Examples of such problems appear in relation to considering Fermat-type equations, and Waring's problem for analytic functions, etc. A thorough review due to Gundersen and Hayman of the applications of Cartan's second main theorem to the complex plane can be found in \cite{gundersenh:04}. See, for instance, also \cite{lang:87,ru:01} for detailed presentations of Cartan's value distribution theory, and \cite{cherryy:01,hayman:64} for Nevanlinna theory.

Difference analogues of Cartan's second main theorem have been recently obtained, independently, by  Halburd, Korhonen and Tohge \cite{halburdkt:14}, and by Wong, Law and Wong \cite{wonglw:09}. In order to state the Cartan second main theorem for differences, we define the $n$-dimensional complex projective space $\P^n$ as the quotient space
$\bigl(\C^{n+1}\setminus\{\mathbf{0}\}\bigr)/\sim$, where
    $$(a_0,a_1, \ldots , a_{n})\sim (b_0, b_1, \ldots , b_{n})$$
if and only if
    $$(a_0,a_1, \ldots , a_{n})=\lambda (b_0, b_1, \ldots , b_{n})$$
for some $\lambda\in\C\setminus\{0\}$. The \textit{Cartan characteristic function} of a holomorphic curve
$$g=[g_0:\cdots:g_n]:\C\to\P^{n}\,,$$
or its associated system of $n+1$ entire functions $g_j$,
$$G:=(g_0,\ldots, g_n):\C\to\C^{n+1}\setminus\{\boldsymbol{0}\}\,,$$
is defined by
    \begin{equation}\label{sec1thm1.11}
    T_g(r)=T(r, G)=\int_0^{2\pi}u(re^{i\theta})\frac{d\theta}{2\pi}-u(0),
    \end{equation}
where $r>0$ and
    \begin{equation*}
    u(z)=\max_{0\leq k\leq n} \log |g_k(z)|.
    \end{equation*}
Here $g_0,\ldots, g_{n}$ are entire functions such that for all complex numbers $z$ the quantity
$\max_{0\leq k \leq n} |g_k(z)| $ is non-zero, that is, the $g_j$'s have no common zeros in the whole of~$\C$.
We call the holomorphic map~$G$ a \textit{reduced representation} of the curve $g$. The hyper-order of $g$ is defined by
\begin{eqnarray*}
  \varsigma(g)=\limsup_{r\rightarrow\infty}\frac{\log \log T_g(r)}{\log r}.
\end{eqnarray*}

Let $c\in \mathbb{C}$, and let $\mathcal{P}_{c}^{1}$ be the field of period $c$ meromorphic
functions defined in $\mathbb{C}$ of hyper-order strictly less than one. The following theorem is a difference analogue of
Cartan's result, where the ramification term has been replaced by a quantity expressed in terms of the Casorati determinant of functions which are linearly independent over a field of periodic functions.

\begin{theorem}[\cite{halburdkt:14}] \label{thm1.1}
Let $n \geq 1$, and let $g_{0},\, \ldots,\, g_{n}$ be entire functions, linearly independent over $\mathcal{P}_{c}^{1}$, such that
$\max \{|g_{0}(z)|,\, \ldots,\, |g_{n}(z)|\}> 0$ for each $z\in \mathbb{C}$, and
\begin{equation*} \label{e1.4}
\varsigma := \varsigma (g)<1, \quad  g=[g_{0}: \cdots : g_{n}].
\end{equation*}
Let $\varepsilon>0$. If $f_{0},\, \ldots,\, f_{q}$ are $q+1$ linear combinations over $\C$ of the $n+1$ functions $g_{0},\, \ldots,\, g_{n}$,
where $q>n$, such that any $n+1$ of the $q+1$ functions $f_{0},\, \ldots,\, f_{q}$ are linearly
independent over $\mathcal{P}_{c}^{1}$, and
\begin{equation*} \label{a1.1}
L=\frac{f_{0}f_{1}\cdots f_{q}}{C(g_{0},g_{1}, \ldots, g_{n})},
\end{equation*}
then
\begin{equation*}
(q-n)T_{g}(r)\leq N\left(r,\frac{1}{L}\right)-N(r,L)+o\left( \frac{T_{g}(r)}{r^{1-\varsigma -\varepsilon}} \right)+O(1),
\end{equation*}
where $r$ approaches infinity outside of an exceptional set $E$ of finite logarithmic measure (i.e. $\int _{E\cap[1,\infty)} dt/t < \infty$).
%as $t \to +\infty$.
\end{theorem}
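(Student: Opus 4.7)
My plan is to mimic the classical proof of Cartan's Second Main Theorem, replacing the Wronskian by the Casorati determinant $C(g_0,\ldots,g_n)(z) = \det\bigl(g_k(z+jc)\bigr)_{0\le j,k\le n}$ and replacing the classical Nevanlinna logarithmic derivative lemma by its difference analogue for functions of hyper-order strictly less than one. The whole argument splits naturally into a pointwise lower bound for $\log|L|$, integration around the circle $|z|=r$ against $d\theta/(2\pi)$ together with an application of Jensen's formula, and finally control of the arising logarithmic proximity terms by the difference lemma.

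\textbf{Pointwise estimate.} Fix $z\in\C$ and relabel the $f_j$'s so that $|f_{i_0}(z)|\le |f_{i_1}(z)|\le\cdots\le|f_{i_q}(z)|$. Because any $n+1$ of the $f_j$'s are linearly independent over $\mathcal{P}_{c}^{1}$, and all of them lie in the $(n+1)$-dimensional $\C$-span of $g_0,\ldots,g_n$, the last $n+1$ functions $f_{i_{q-n}},\ldots,f_{i_q}$ form a $\C$-basis of that span, hence
\[
C(g_0,\ldots,g_n)= (\det M)\,C(f_{i_{q-n}},\ldots,f_{i_q}),
\]
for some invertible constant matrix $M$ (depending on the chosen index set, but only finitely many occur). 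Moreover the top $n+1$ of the $|f_{i_j}(z)|$'s are comparable to $\max_{0\le k\le n}|g_k(z)| = e^{u(z)}$. Writing
\[
|L(z)| = \frac{\prod_{j=0}^{q}|f_{j}(z)|}{|C(g_0,\ldots,g_n)(z)|}
\]
and using the two observations above yields a pointwise lower bound of the form
\[
\log|L(z)| \ge (q-n)\,u(z) \;-\; \log\frac{|C(f_{i_{q-n}},\ldots,f_{i_q})(z)|}{\prod_{k=0}^{n}|f_{i_{q-n+k}}(z)|} \;+\; O(1),
\]
uniformly over the (finitely many) choices of the index set $\{i_{q-n},\ldots,i_q\}$.

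\textbf{Integration and the difference lemma.} Integrating the previous inequality over $|z|=r$ against $d\theta/(2\pi)$, the left-hand side becomes $N(r,1/L)-N(r,L)+O(1)$ by Jensen's formula, while $\int u(re^{i\theta})\,d\theta/(2\pi) = T_g(r)+u(0)$ by the definition of the Cartan characteristic in \eqref{sec1thm1.11}. Expanding $C(f_{i_{q-n}},\ldots,f_{i_q})/\prod_{k} f_{i_{q-n+k}}(z)$ as a sum over permutations, each summand is a product of shifted quotients $f_{i_k}(z+jc)/f_{i_k}(z)$; the logarithm of each factor contributes, after integration, a proximity function of a difference quotient. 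By the difference analogue of the lemma on the logarithmic derivative for hyper-order $<1$,
\[
m\!\left(r,\frac{h(z+c)}{h(z)}\right) = o\!\left(\frac{T(r,h)}{r^{1-\varsigma-\varepsilon}}\right)
\]
outside an exceptional set of finite logarithmic measure, and since each $f_j$ is a $\C$-linear combination of $g_0,\ldots,g_n$ we have $T(r,f_j)=O(T_g(r))$. Assembling everything produces the claimed inequality.

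\textbf{Main obstacle.} The delicate points are twofold. First, one must track the bookkeeping of the finitely many "relabelings" of the $f_j$'s so that the pointwise estimate passes to a single integral inequality; this uses the fact that $\max_{0\le k\le n}|g_k(z)|$ is comparable to the product of the top $n+1$ values $|f_{i_{q-n}}(z)|\cdots|f_{i_q}(z)|$ up to a bound that does not depend on $z$. Second, and more substantial, is the careful application of the difference logarithmic derivative lemma: one must verify that the (finite) union of exceptional $r$-sets arising from applying the lemma to each relevant shifted quotient still has finite logarithmic measure, and that linear combinations of the $g_j$'s do not worsen the hyper-order. Both are standard but require care; the general position hypothesis that any $n+1$ of $f_0,\ldots,f_q$ are linearly independent over $\mathcal{P}_{c}^{1}$ is what legitimises the basis change in the pointwise step.
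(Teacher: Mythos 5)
This statement is quoted in the paper from \cite{halburdkt:14}; the paper proves only the generalization Theorem~\ref{thm1.2} (Section~\ref{proofsec}), which reduces to this statement for constant $a_{ij}$, so that proof is the relevant benchmark. Measured against it, your outline has a fatal sign error in the pointwise step. With your increasing ordering $|f_{i_0}(z)|\le\cdots\le|f_{i_q}(z)|$ you pair the \emph{largest} $n+1$ functions with the Casorati determinant, so the factor of the numerator left over is $\prod_{j=0}^{q-n-1}|f_{i_j}(z)|$, the product of the $q-n$ \emph{smallest} values; your claimed bound $\log|L(z)|\ge(q-n)u(z)-\cdots$ then needs each of these smallest values to dominate $e^{u(z)}$ up to a constant, which is false --- they may vanish at $z$. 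The Gundersen--Hayman-type lemma (Theorem~\ref{lem2.1} here) guarantees $e^{u(z)}\le A(z)|f_{m_\nu}(z)|$ only for the \emph{largest} $q-n+1$ values. The correct decomposition is the mirror image: use the largest $q-n$ factors to produce $(q-n)u(z)$, and absorb the smallest $n+1$ functions --- which still form a basis by general position --- into the Casoratian quotient, where their zeros become poles of that quotient and are accounted for in $N(r,1/L)$ via Jensen's formula. (Your side claim that the top $n+1$ values are comparable to $e^{u(z)}$ is likewise only guaranteed when $n+1\le q-n+1$.)

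The proximity step also fails as written. You apply the difference lemma directly to $h=f_{i_k}$ and assert $T(r,f_j)=O(T_g(r))$, but the Cartan characteristic controls only quotients of the coordinate functions, not the functions themselves: for the reduced representation $g_0=e^{-z^2}$, $g_1=ze^{-z^2}$ one has $T_g(r)=\log r+O(1)$ while $T(r,f_j)\sim 2r^2/\pi$ and $m\bigl(r,f_j(z+c)/f_j(z)\bigr)\sim 2|c|r/\pi$, so the required $o$-bound is false; in the same example even $m\bigl(r,C(f_{b_0},\ldots,f_{b_n})/\prod_k f_{b_k}\bigr)$ grows like $r$. This is exactly why the paper (i) works with the quotients $f_\mu/f_\nu$, for which $T(r,f_\mu/f_\nu)\le(1+o(1))T_g(r)$ by Lemma~\ref{lem2.5}, and (ii) replaces $L$ by the shifted auxiliary function $\widetilde L=f_0\overline f_1\cdots\overline f_n^{[n]}f_{n+1}\cdots f_q/C(g_0,\ldots,g_n)$, whose shifted numerator is what allows the Casoratian to be rewritten as a determinant in shifts of such quotients (the function $G$ in \eqref{eak}); one then returns from $N(r,1/\widetilde L)-N(r,\widetilde L)$ to $N(r,1/L)-N(r,L)$ using Lemma~\ref{lem2.2}. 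This detour is not optional in the difference setting, because the rows of the Casoratian live at different translates of $z$; your proposal omits it entirely.
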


Comparing the operators $Df=f'$ and $\Delta f=f(z+1)-f(z)$, a natural
difference analogue of constant targets for $f'$ is the
periodic targets case for $\Delta f$. For instance, linear differential equations with constant coefficients can be exactly solved modulo arbitrary constants, while for linear difference equations the same statement is true but with arbitrary periodic functions. Also, as shown in \cite{halburdk:14}, the natural target space for the second main theorem in the complex plane is the solution space of
    $$
    L(f)=0,
    $$
where $L$ is a linear operator mapping a subclass $\mathcal{N}$ of the meromorphic functions in $\C$ into itself. Taking $L(f)=Df$ gives constants as targets, while the choice $L(f)=\Delta f$ yields periodic functions. Also, as in
the above Theorem~\ref{thm1.1}, the condition ``entire functions $g_{1},\, g_{2},\, \ldots,\, g_{p}$ linearly independent
over $\mathbb{C}$'' is changed naturally into ``linearly independent
over $\mathcal{P}_{c}^{1}$''. %(The difference analogue
%of the 2nd main theorem is for periodic targets, for example).\\
A natural difference analogue of Cartan's second main theorem would therefore be for slowly moving periodic target hyperplanes, rather than constants as is the case in Theorem~\ref{thm1.1}. In this paper we remedy this situation by introducing the following theorem.

\begin{theorem} \label{thm1.2}
Let $n\geq 1$, and let $g=[g_{0}: \ldots: g_{n}]$ be a holomorphic curve of $\C$ into $\P^n(\C)$ with $\varsigma := \varsigma(g)<1$, where $g_{0},\, \ldots, g_{n}$ are %be entire functions,
linearly independent over $\mathcal{P}_{c}^{1}$. If
    \begin{equation*}
      f_{j}=\sum_{i=0}^{n}a_{ij}g_{i} \quad  j=0,\, \ldots, \, q, \, q>n,
    \end{equation*}
where $a_{ij}$ are $c$-periodic entire functions satisfying
$T(r,a_{ij})= o(T_{g}(r))$, such that any $n+1$ of the $q+1$ functions $f_{0}, \ldots, f_{q}$
are linearly independent over $\mathcal{P}_{c}^{1}$, and
 \begin{eqnarray}\label{hab}
 % \nonumber to remove numbering (before each equation)
  L=\frac{f_{0}f_{1}\cdots f_{q}}{C(g_{0},\, g_{1},\, \ldots,\, g_{n})},
 \end{eqnarray}
then
 \begin{eqnarray}\label{hac}
 % \nonumber to remove numbering (before each equation)
   (q-n)T_{g}(r)\leq N\left(r,\frac{1}{L}\right)-N(r, L)+o(T_{g}(r)),
 \end{eqnarray}
where $r$ approaches infinity outside of an exceptional set $E$
of finite logarithmic measure.
\end{theorem}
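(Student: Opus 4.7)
My plan is to mimic the proof of Theorem~\ref{thm1.1} from \cite{halburdkt:14}, exploiting the $c$-periodicity of the coefficients $a_{ij}$: with respect to the shift $z \mapsto z+c$ that drives every estimate in the difference setting, the $a_{ij}$ behave like constants. The growth hypothesis $T(r, a_{ij}) = o(T_g(r))$ ensures that every new small-function error term introduced by this replacement is absorbed into $o(T_g(r))$.

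The principal algebraic ingredient is the identity
\begin{equation*}
C(f_{j_0}, \ldots, f_{j_n})(z) = (\det A_J)(z) \cdot C(g_0, \ldots, g_n)(z),
\end{equation*}
valid for every $(n+1)$-element subset $J = \{j_0, \ldots, j_n\} \subset \{0, 1, \ldots, q\}$, where $A_J(z) := [a_{i,j_l}(z)]_{0 \le i, l \le n}$. This follows at once from $f_{j_l}(z+kc) = \sum_i a_{i,j_l}(z+kc)\, g_i(z+kc) = \sum_i a_{i,j_l}(z)\, g_i(z+kc)$ by periodicity, so the Casorati matrix of the $f_{j_l}$'s factors as the shift matrix of the $g_i$'s multiplied by $A_J(z)$. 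The hypothesis that any $n+1$ of the $f_j$'s are $\mathcal{P}_{c}^{1}$-linearly independent forces $\det A_J \not\equiv 0$, and since $\det A_J$ is $c$-periodic and a polynomial in the $a_{ij}$, one has $T(r, \det A_J) = o(T_g(r))$.

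With this identity in hand, I would follow the Cartan-type pointwise combinatorial argument of \cite{halburdkt:14}: at each $z$, reorder $|f_0(z)|, \ldots, |f_q(z)|$, take $J$ to be the indices of the $n+1$ largest values, and use their linear independence to bound $\max_k \log |g_k(z)|$ in terms of $\log|L|$ corrected by a Casorati ratio. Integrating, the Nevanlinna-theoretic step requires the proximity estimate
\begin{equation*}
m\left(r, \frac{f_{j_0} \cdots f_{j_n}}{C(f_{j_0}, \ldots, f_{j_n})}\right) = o(T_g(r)),
\end{equation*}
obtained by expanding the Casorati ratio into entries $f_{j_l}(z+kc)/f_{j_l}(z)$ and invoking the difference logarithmic derivative lemma, with the comparison $T(r, f_j) \le T_g(r) + O\bigl(\max_i T(r, a_{ij})\bigr) = T_g(r) + o(T_g(r))$. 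Substituting the factorization and absorbing $m(r, 1/\det A_J) \le T(r, \det A_J) = o(T_g(r))$, a final application of the first main theorem converts proximity functions into $N(r, 1/L) - N(r, L)$ and yields \eqref{hac}.

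The main technical obstacle is the bookkeeping of small functions: one must verify that the sum of $T(r, \det A_J)$ over all $\binom{q+1}{n+1}$ subsets $J$, together with the union of the exceptional sets produced by all attendant applications of the difference logarithmic derivative lemma, collapses into a single $o(T_g(r))$ bound outside a single exceptional set of finite logarithmic measure. A short additional argument is also needed to extract $\det A_J \not\equiv 0$ from the linear independence hypothesis, since a vanishing determinant would yield a non-trivial $c$-periodic dependence among the $f_{j_l}$'s that must be checked to actually lie in $\mathcal{P}_{c}^{1}$.
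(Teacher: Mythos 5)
Your proposal is correct in outline and follows essentially the same route as the paper: the periodicity-based factorization of $C(f_{b_0},\ldots,f_{b_n})$ as a small $c$-periodic determinant times $C(g_0,\ldots,g_n)$, the Cartan-type pointwise selection of indices, the difference analogue of the logarithmic derivative lemma, and the absorption of all coefficient determinants and exceptional sets into a single $o(T_g(r))$ bound are precisely the ingredients of the paper's argument (Theorem~\ref{lem2.1}, Lemmas~\ref{lem2.2}, \ref{lem2.3} and \ref{lem2.5}, together with the shifted auxiliary function $\widetilde L$). One detail is reversed: to bound each $|g_k(z)|$ by every one of the $q-n$ largest moduli $|f_{m_\nu}(z)|$, you must solve for the $g_k$ by Cramer's rule from the $n+1$ \emph{smallest} $|f_j(z)|$, not the $n+1$ largest.
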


In Section~\ref{deficiencysec} below we will show how Theorem~\ref{thm1.2} leads to two difference analogues of the truncated second main theorem, which are based on natural discrete versions of properties of the derivative function. Now, we will show that Theorem~\ref{thm1.2} implies the difference analogue of the second main theorem obtained in \cite[Theorem 2.5]{halburdk:06AASFM} in the general case of slowly moving periodic targets. (Theorem \ref{thm1.1} implies only the special case of constant targets.) To this end, let $w$ be a non-periodic meromorphic function of hyper-order $\varsigma(w)<1$. Let $g_0$ and $g_1$ be linearly independent
entire functions with no common zeros such that $w=g_0/g_1$. Let $a_j$ be $c$-periodic meromorphic functions that are small with respect to $w$ for all $j=0,\ldots,q-1$. Denote
    $$
    a_j = \frac{\alpha_j}{\beta_j}, \qquad j\in\{0,\ldots,q-1\},
    $$
where $\alpha_j$ and $\beta_j$ are $c$-periodic entire functions, and define $f_j=\beta_j g_0-\alpha_j g_1$ and $f_{q}=g_1$. Then Theorem \ref{thm1.2} yields
    \begin{equation}\label{applT}
    (q-1)T_g(r)\leq N\left(r,\frac{1}{L}\right)-N(r,L)+o(T_g(r))
    \end{equation}
where
    \begin{equation*}
    L=\frac{f_0f_1\cdots
    f_{q-1}g_{1}}{g_0\overline{g}_1-\overline{g}_0
    g_1},
    \end{equation*}
and $r$ approaches infinity outside of a set of finite logarithmic measure. The counting function $\widetilde{N}$ in the difference analogue of the second main theorem is defined in
\cite{halburdk:06AASFM} by
    \begin{equation}\label{c1}
    \widetilde{N}\left(r,\frac{1}{w-a}\right)=\int_0^r\frac{\widetilde{n}(t,a)-\widetilde{n}(0,a)}{t}\,dt+
    \widetilde{n}(0,a)\log r
    \end{equation}
where $\widetilde{n}(r,a)$ is the number of $a$-points of $w$ with multiplicity of $w(z_0)=a$ counted according to multiplicity
of $a$ at $z_0$ minus the order of zero of $\Delta_c w:=w(z+c)-w(z)$ at $z_0$. By combining \eqref{applT} with \eqref{c1}, it follows that
    \begin{equation*}
    (q-1)T(r,w)\leq \widetilde{N}(r,w)+ \sum_{j=0}^{q-1}
    \widetilde{N}\left(r,\frac{1}{w-a_j}\right)-N_0\left(r,\frac{1}{\Delta_c w}\right)
    +o(T(r,w))
    \end{equation*}
where $N_0(r,1/\Delta_c w)$ is the counting function of those zeros of
$\Delta_c w$ which do not coincide with any of the zeros or poles of $w-a_j$, and $r$ approaches infinity outside of a set of finite logarithmic measure. We have therefore shown that \cite[Theorem
2.5]{halburdk:06AASFM} follows from Theorem~\ref{thm1.2}

The remainder of the paper is organized in the following way. Section~\ref{zerossec} contains a key result (see Theorem~\ref{lem2.1} below) on linear combinations of entire functions over the field of meromorphic functions, which is a crucial tool in the proof of Theorem~\ref{thm1.2} in Section~\ref{proofsec}. Applications of the difference analogue of Cartan's theorem to Picard's theorem are in Section~\ref{picardsec}, while deficiencies and difference analogues of the truncated second main theorem can be found in Section~\ref{deficiencysec}.

\section{Zeros of linear combinations of entire functions}\label{zerossec}

%\noindent The proof of Lemma \ref{lem2.12} is based on the following
%Lemma.

One of the key problems in the proof of Theorem \ref{thm1.2} has to do with finding a lower bound for linear combinations of entire functions over the field of small functions in terms of moduli of their base functions. In the case of constant coefficients \cite[Lemma 8.2]{gundersenh:04} yields the desired results, but for non-autonomous linear combinations the situation becomes much more delicate due to possible poles and zeros of the coefficients. The key idea, which enables an applicable result needed for the proof of Theorem \ref{thm1.2}, is to formulate the estimate using a positive real valued function $A(z)$ for which the proximity function
    $$m(r,A)=\int_0^{2\pi}\log^+|A(re^{i\theta})|\frac{d\theta}{2\pi}$$
can be evaluated as $m(r,A)=o(T_g(r))$, despite of the fact that $A$ is not meromorphic. The zeros of the coefficients in the linear combinations can then be included in a small error term of the growth $o(T_g(r))$. The exact formulation is as follows.

\begin{theorem} \label{lem2.1}
Let $n\geq 1$, and let $g=[g_{0}: \ldots: g_{n}]$ be a holomorphic curve of $\C$ into $\P^n(\C)$, where $g_{0},\, \ldots, g_{n}$ are %be entire functions,
linearly independent over $\mathcal{P}_{c}^{1}$.
If
\begin{eqnarray*}
    % \nonumber to remove numbering (before each equation)
      f_{j}=\sum_{i=0}^{n}a_{ij}g_{i} \quad  j=0,\, \ldots, \, q, \, q>n,
\end{eqnarray*}
where $a_{ij}$ are entire functions satisfying
$T(r,a_{ij})=o(T_{g}(r))$, % and for all $z\in \mathbb{C}$,
 such that
any $n+1$ of the $q+1$ functions $f_{0},\, \ldots,\, f_{q}$ are linearly independent over $\mathcal{P}_{c}^{1}$,
then there exists a positive real valued function $A(z)$, such
that
\begin{eqnarray} \label{sec1thb}
|g_{j}(z)|\leq A(z)\cdot |f_{m_{\nu}}(z)|,
\end{eqnarray}
where $0\leq j \leq n$, \, $0\leq \nu \leq q-n$, $m(r,A)=o(T_{g}(r))$ and the integers $m_{0}, \ldots, m_{q}$
are chosen so that
\begin{eqnarray}\label{eaa}
% \nonumber to remove numbering (before each equation)
 |f_{m_{0}}(z)|\geq |f_{m_{1}}(z)|\geq \cdots \geq |f_{m_{q}}(z)|.
\end{eqnarray}
In particular, there exist at least $q-n+1$ functions $f_{j}$ that do not vanish at $z$
for all $r$ outside of a set of finite logarithmic measure, and moreover the integrated counting function $N^*(r)$ of common zeros of more than $n$ functions $f_{j}$ satisfies
    \begin{equation}\label{Nstar}
    N^*(r) = o(T_{g}(r)).
    \end{equation}
\end{theorem}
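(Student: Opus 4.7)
The plan is to adapt the constant-coefficient argument of Gundersen--Hayman \cite[Lemma 8.2]{gundersenh:04} to the slowly-moving periodic setting, replacing the constant upper bound there by a real-valued function $A(z)$ whose proximity function is $o(T_g(r))$. The strategy has three pieces: use the $\mathcal{P}_c^1$-linear independence hypothesis to invert the coefficient system via Cramer's rule, package all the resulting small meromorphic coefficients into a single envelope $A$, and then run a pigeonhole contradiction to show that too many small $|f_j(z)|$ values cannot occur.

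For the inversion step, for each $(n+1)$-subset $S = \{k_0, \ldots, k_n\} \subset \{0, 1, \ldots, q\}$ I would form the $(n+1)\times(n+1)$ matrix $A_S := (a_{i,k_l})_{0\le i,l\le n}$. Viewed as a matrix over $\mathcal{P}_c^1$, its determinant cannot vanish identically: if it did, linear algebra over the field $\mathcal{P}_c^1$ would produce $(c_0,\ldots,c_n)\in(\mathcal{P}_c^1)^{n+1}\setminus\{0\}$ with $\sum_l c_l a_{i,k_l}=0$ for every $i$, and hence $\sum_l c_l f_{k_l}=0$, contradicting the assumption. Cramer's rule then yields
$$g_i = \sum_{l=0}^{n} b_{il}^{(S)} f_{k_l}, \qquad i=0,\ldots,n,$$
with each $b_{il}^{(S)}$ a quotient of subdeterminants built from the $a_{ij}$'s, so $T(r,b_{il}^{(S)})=o(T_g(r))$ by subadditivity of the Nevanlinna characteristic under rational operations.

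Now I would define $A(z) := 2(n+1)\max_{S,i,l}|b_{il}^{(S)}(z)|$, with the maximum taken over the finite collection of $(n+1)$-subsets $S$ and indices $0\le i,l\le n$. Since the proximity function is subadditive over finite maxima of moduli, $m(r,A)=o(T_g(r))$ follows immediately. For the main inequality, fix $z$, set $B(z):=\max_i |g_i(z)|$ (which is strictly positive because the $g_i$'s have no common zero), and suppose for contradiction that $n+1$ or more of the values $|f_0(z)|,\ldots,|f_q(z)|$ are strictly less than $B(z)/A(z)$. Choosing such an $(n+1)$-subset $S$ and inserting it into the Cramer representation gives, for every $i$,
$$|g_i(z)| \le \sum_l |b_{il}^{(S)}(z)|\,|f_{k_l}(z)| < (n+1)\max_l |b_{il}^{(S)}(z)|\cdot\frac{B(z)}{A(z)} \le \frac{B(z)}{2},$$
which contradicts the definition of $B(z)$. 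Hence at most $n$ of the $|f_j(z)|$'s fall below $B(z)/A(z)$, so the top $q-n+1$ ordered values satisfy $|f_{m_\nu}(z)|\ge B(z)/A(z)\ge |g_j(z)|/A(z)$ for $0\le \nu \le q-n$, which is exactly \eqref{sec1thb}.

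The remaining claims fall out directly. Positivity of $B(z)$ combined with the bound above forces the top $q-n+1$ values $|f_{m_\nu}(z)|$ to be positive wherever $A(z)$ is finite, giving the non-vanishing assertion. For \eqref{Nstar}, if more than $n$ of the $f_j$'s vanish simultaneously at some $z_0$, pick an $(n+1)$-subset $S$ of such indices; the Cramer formula would force every $g_i(z_0)=0$ unless some $b_{il}^{(S)}$ has a pole at $z_0$, i.e.\ $\det A_S(z_0)=0$. Thus the exceptional locus is contained in the union of the zero sets of the finitely many determinants $\det A_S$, each entire with $T(r,\det A_S)=o(T_g(r))$; the first main theorem gives $N(r,1/\det A_S)=o(T_g(r))$, and summing yields $N^*(r)=o(T_g(r))$. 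The step requiring most care is the invertibility argument, which relies crucially on the $a_{ij}$'s taking values in (the field) $\mathcal{P}_c^1$, so that singularity of $A_S$ really does transfer into a nontrivial $\mathcal{P}_c^1$-linear relation among the $f_{k_l}$'s.
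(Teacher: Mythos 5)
Your proposal is correct and follows essentially the same route as the paper's proof: invert the coefficient system by Cramer's rule over each $(n+1)$-subset, observe that the resulting cofactor quotients are meromorphic with characteristic $o(T_g(r))$, package them into an envelope function $A(z)$ with $m(r,A)=o(T_g(r))$, and control the non-vanishing claim and $N^*(r)$ through the zero loci of the finitely many determinants $\det A_S$ (via Gundersen's lemma and $T(r,\det A_S)=o(T_g(r))$). The only cosmetic difference is that you derive \eqref{sec1thb} by a pigeonhole contradiction using a global maximum of the cofactor quotients over all subsets, whereas the paper substitutes the subset indexed by the $n+1$ smallest values $|f_{m_{q-n}}(z)|,\ldots,|f_{m_q}(z)|$ directly and takes $A(z)$ to be the corresponding sum of moduli; both versions also share the same delicate step of deducing $\det A_S\not\equiv 0$ from linear independence over $\mathcal{P}_c^1$.
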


The following lemma due to G. G. Gundersen \cite{gundersen:88}, which is used to prove Theorem~\ref{lem2.1}, is an
estimation of the logarithmic measure about the total moduli of the zeros and poles
of a meromorphic function. Its statement and proof have been embedded as a part of the proof of \cite[Theorem 3]{gundersen:88}.

\begin{lemma}[\cite{gundersen:88}] \label{lem2.12}
Let $f(z)$ be a meromorphic function, and let $E_{f}:=\{r: z\in \mathbb{C}, |z|=r, f(z)=0 \; or \; f(z)=\infty\}$.
Then the set $E_{f}$ is of finite logarithmic measure.
\end{lemma}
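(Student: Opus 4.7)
The plan is to observe that the statement is essentially immediate once one invokes the discreteness of the zero-and-pole set of a meromorphic function. First, I would note that for any meromorphic $f$ on $\C$, the set $Z_f := f^{-1}(\{0,\infty\})$ is a discrete subset of $\C$: if it accumulated at some finite $z_0 \in \C$, the identity theorem applied to either $f$ or $1/f$ would force one of them to vanish identically, contradicting $f$ being a meromorphic function with only isolated zeros and poles. Any discrete subset of $\C$ is at most countable, so one can enumerate $Z_f = \{z_n\}_{n\in\N}$ with $|z_n|\to\infty$.

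By the very definition given in the statement, $E_f$ is the image of $Z_f$ under the modulus map, that is, $E_f = \{|z_n| : n\in\N\}$, and hence is itself at most countable. Any countable subset of $\R$ has one-dimensional Lebesgue measure zero, so
\[
\int_{E_f\cap[1,\infty)}\frac{dt}{t} \;=\; 0 \;<\; \infty,
\]
which is the claim. There is no substantive obstacle here: the only point requiring mild care is the appeal to the identity theorem to rule out a finite accumulation of $Z_f$ in $\C$. Gundersen embeds the assertion inside a longer argument in which $E_f$ is combined with other exceptional sets arising from estimates on logarithmic derivatives, but the logarithmic-measure bound for $E_f$ itself is as straightforward as sketched here.
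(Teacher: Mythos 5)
Your argument is correct, and it is as elementary as the lemma, read literally, deserves: for $f\not\equiv 0$ the zero-and-pole set is discrete, hence countable, so its image under $z\mapsto|z|$ is a countable subset of $[0,\infty)$ and has logarithmic (indeed Lebesgue) measure zero. Two remarks. First, the paper supplies no proof of its own here --- it only records that the statement is embedded in the proof of \cite[Theorem 3]{gundersen:88} --- so there is nothing to match your argument against except that citation; what Gundersen actually needs in that context is the stronger, genuinely quantitative fact that a union of small \emph{intervals} about the moduli of the zeros and poles has finite logarithmic measure, which rests on finite order and the convergence of a series built from the counting function. The bare set of moduli, which is all this paper ever uses (for the determinants $d_{i_{0}\ldots i_{n}}$ in the proof of Theorem~\ref{lem2.1}), is fully handled by your countability observation, so your route is both more elementary and sufficient for the application. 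Second, you should make the hypothesis $f\not\equiv 0$ explicit: for $f\equiv 0$ one has $E_{f}=[0,\infty)$ and the conclusion fails, and your appeal to the identity theorem tacitly presupposes this nondegeneracy. In the paper's application it is guaranteed because $d_{i_{0}\ldots i_{n}}\not\equiv 0$ has already been established before the lemma is invoked.
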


\begin{proof}[Proof of Theorem~\ref{lem2.1}.]
The first part of the proof follows the basic idea behind the proof of \cite[Lemma 8.2]{gundersenh:04}. At the end we need to use other methods to find an estimate for the proximity function of $A(z)$, and to deal with the zeros of the coefficient functions in the linear combinations.

Since each $ f_{j}$ is a linear combination of the functions $g_{0}, \, \ldots,\, g_{n}$
with small coefficients,
     \begin{eqnarray}\label{eab}
      f_{j}=\sum_{i=0}^{n}a_{ij}g_{i} \quad  j=0,\, \ldots, \, q, \, q>n,
 \end{eqnarray}
where $T(r,a_{ij})=o(T_{g}(r))$. For each $z$, let $m_{0},\, m_{1},\, \ldots, m_{q}$
be the integers in (\ref{eaa}), which depend on $z$, and
let $\nu$ be any fixed integer satisfying $0\leq \nu \leq q-n$.
Then
\begin{eqnarray}\label{eac}
|f_{m_{\mu}}(z)|\leq |f_{m_{\nu}}(z)|,  \quad \mu=q-n,q-n+1, \ldots, q,
\end{eqnarray}
and letting $\{i_{0},\, \ldots,\, i_{n}\}\subset \{0, \ldots, q \}$, it follows from (\ref{eab})
that
\begin{eqnarray*}
f_{i_{k}}(z)= \sum _{j=0} ^{n}a_{ji_{k}}(z)g_{j}(z),  \quad k=0,1, \ldots, n,
\end{eqnarray*}
that is,
\begin{eqnarray}\label{eadd}
\left(
\begin{array}{c}
f_{i_{0}}(z)\\
f_{i_{1}}(z)\\
\vdots \\
f_{i_{n}}(z)\\
\end{array}
\right)
=
\left( \begin{array}{ccc}
a_{0i_{0}}(z) & \cdots & a_{ni_{0}}(z) \\
a_{0i_{1}}(z) & \cdots  & a_{ni_{1}}(z) \\
\vdots & \ddots & \vdots \\
a_{0i_{n}}(z) & \cdots & a_{ni_{n}}(z)
\end{array} \right)
\cdot
\left(
\begin{array}{c}
g_{0}(z)\\
g_{1}(z)\\
\vdots \\
g_{n}(z)\\
\end{array}
\right).
\end{eqnarray}
We set the determinant $d_{i_{0}\ldots i_{n}}(z)$ by
\begin{eqnarray}\label{eaeee}
d_{i_{0}\ldots i_{n}}(z):=\left|
\begin{array}{ccc}
a_{0i_{0}}(z) & \cdots & a_{ni_{0}}(z) \\
a_{0i_{1}}(z) & \cdots  & a_{ni_{1}}(z) \\
\vdots & \ddots & \vdots \\
a_{0i_{n}}(z) & \cdots & a_{ni_{n}}(z)
\end{array} \right|.
\end{eqnarray}
Since $g_{0},\, \ldots,\, g_{n}$ and
$f_{i_{0}},\, \ldots,\, f_{i_{n}}$ are linearly independent over $\mathbb{C}$
(since $\mathbb{C} \subset \mathcal{P}_{c}^{1}$),
we get $d_{i_{0}\ldots i_{n}}(z)\not\equiv 0$. Otherwise, since
$f_{i_{0}},\, \ldots,\, f_{i_{n}}$ can be expressed by using
$g_{0},\, \ldots,\, g_{n}$, from \eqref{eadd} we would have

\begin{eqnarray*}
\textrm{rank of}  \left(
\begin{array}{ccc}
a_{0i_{0}}(z) & \cdots & a_{ni_{0}}(z) \\
a_{0i_{1}}(z) & \cdots  & a_{ni_{1}}(z) \\
\vdots & \ddots & \vdots \\
a_{0i_{n}}(z) & \cdots & a_{ni_{n}}(z)
\end{array} \right)=\textrm{rank of}  \left(
\begin{array}{cccc}
a_{0i_{0}}(z) & \cdots & a_{ni_{0}}(z) & f_{i_{0}}\\
a_{0i_{1}}(z) & \cdots  & a_{ni_{1}}(z)& f_{i_{1}}\\
\vdots & \ddots & \vdots & \vdots \\
a_{0i_{n}}(z) & \cdots & a_{ni_{n}}(z) & f_{i_{n}}
\end{array} \right).
\end{eqnarray*}
But from $d_{i_{0}\ldots i_{n}}(z)\equiv 0$ and from the fact that $f_{i_{0}},\, \ldots,\, f_{i_{n}}$
are linearly independent, we have a contradiction. Thus $d_{i_{0}\ldots i_{n}}(z)\not\equiv 0$ as we claimed.

By using Cramer's rule, for each $j=0,1, \ldots, n$, we have
\begin{eqnarray*}\label{seceaee}
g_{j} &=&\frac{\left|
\begin{array}{ccccccc}
a_{0i_{0}} & \cdots  &  a_{j-1 i_{0}}  & f_{i_{0}} &  a_{j+1 i_{0}}  &  \cdots  &  a_{ni_{0}} \\
a_{0i_{1}} & \cdots  &  a_{j-1 i_{1}} & f_{i_{1}} &  a_{j+1 i_{1}} &\cdots  & a_{ni_{1}} \\
\vdots &  \cdots &  \vdots  & \vdots &\vdots &  \cdots  &\vdots \\
a_{0i_{n}} & \cdots  &  a_{j-1 i_{n}} &  f_{i_{q}}& a_{j+1 i_{n}}&\cdots  & a_{ni_{n}}
\end{array} \right|}{d_{i_{0}\ldots i_{n}}(z)}\\
&=&
(-1)^{1+j+1}\frac{\left|
\begin{array}{cccccc}
a_{0i_{1}} & \cdots  &  a_{j-1 i_{1}}  &  a_{j+1 i_{1}} &\cdots  & a_{ni_{1}} \\
\vdots &  \cdots &  \vdots  &\vdots &  \cdots  &\vdots \\
a_{0i_{n}} & \cdots  &  a_{j-1 i_{n}} & a_{j+1 i_{n}}&\cdots  & a_{ni_{n}}
\end{array} \right|}{d_{i_{0}\ldots i_{n}}(z)}\cdot f_{i_{0}}\\
&+&(-1)^{2+j+1}\frac{
\left|
\begin{array}{cccccc}
a_{0i_{0}} & \cdots  &  a_{j-1 i_{0}}  &  a_{j+1 i_{0}}  &  \cdots  &  a_{ni_{0}} \\
a_{0i_{2}} & \cdots  &  a_{j-1 i_{2}}  &  a_{j+1 i_{2}}  &  \cdots  &  a_{ni_{2}}\\
\vdots &  \cdots &  \vdots  & \vdots  &  \cdots  &\vdots \\
a_{0i_{n}} & \cdots  &  a_{j-1 i_{n}} & a_{j+1 i_{n}}&\cdots  & a_{ni_{n}}
\end{array} \right|}{d_{i_{0}\ldots i_{n}}(z)}
\cdot f_{i_{1}} \\
&+&\cdots \\
&+&
(-1)^{n+1+j+1}
\frac{\left|
\begin{array}{cccccc}
a_{0i_{0}} & \cdots  &  a_{j-1 i_{0}}  &  a_{j+1 i_{0}}  &  \cdots  &  a_{ni_{0}} \\
a_{0i_{1}} & \cdots  &  a_{j-1 i_{1}}  &  a_{j+1 i_{1}}  &  \cdots  &  a_{ni_{1}}\\
\vdots &  \cdots &  \vdots  & \vdots  &  \cdots  &\vdots \\
a_{0i_{n-1}} & \cdots  &  a_{j-1 i_{n-1}} & a_{j+1 i_{n-1}}&\cdots  & a_{ni_{n-1}}
\end{array} \right|}{d_{i_{0}\ldots i_{n}}(z)}
\cdot f_{i_{n}}.
\end{eqnarray*}
By setting
\begin{eqnarray}\label{sec2aaa}
c_{i_{0}j}(z)&:=&
(-1)^{1+j+1}\frac{\left|
\begin{array}{cccccc}
a_{0i_{1}} & \cdots  &  a_{j-1 i_{1}}  &  a_{j+1 i_{1}} &\cdots  & a_{ni_{1}} \\
\vdots &  \cdots &  \vdots  &\vdots &  \cdots  &\vdots \\
a_{0i_{n}} & \cdots  &  a_{j-1 i_{n}} & a_{j+1 i_{n}}&\cdots  & a_{ni_{n}}
\end{array} \right|}{d_{i_{0}\ldots i_{n}}(z)}      \nonumber \\
c_{i_{1}j}(z) &:=&
(-1)^{2+j+1}\frac{
\left|
\begin{array}{cccccc}
a_{0i_{0}} & \cdots  &  a_{j-1 i_{0}}  &  a_{j+1 i_{0}}  &  \cdots  &  a_{ni_{0}} \\
a_{0i_{2}} & \cdots  &  a_{j-1 i_{2}}  &  a_{j+1 i_{2}}  &  \cdots  &  a_{ni_{2}}\\
\vdots &  \cdots &  \vdots  & \vdots  &  \cdots  &\vdots \\
a_{0i_{n}} & \cdots  &  a_{j-1 i_{n}} & a_{j+1 i_{n}}&\cdots  & a_{ni_{n}}
\end{array} \right|}{d_{i_{0}\ldots i_{n}}(z)}     \\
&\vdots&    \nonumber \\
c_{i_{n}j}(z) &:=&
(-1)^{n+1+j+1}
\frac{\left|
\begin{array}{cccccc}
a_{0i_{0}} & \cdots  &  a_{j-1 i_{0}}  &  a_{j+1 i_{0}}  &  \cdots  &  a_{ni_{0}} \\
a_{0i_{1}} & \cdots  &  a_{j-1 i_{1}}  &  a_{j+1 i_{1}}  &  \cdots  &  a_{ni_{1}}\\
\vdots &  \cdots &  \vdots  & \vdots  &  \cdots  &\vdots \\
a_{0i_{n-1}} & \cdots  &  a_{j-1 i_{n-1}} & a_{j+1 i_{n-1}}&\cdots  & a_{ni_{n-1}}
\end{array} \right|}{d_{i_{0}\ldots i_{n}}(z)} \nonumber
\end{eqnarray}
it follows that
\begin{eqnarray*}\label{eae}
% \nonumber to remove numbering (before each equation)
g_{j}(z)= \sum _{k=0} ^{n}c_{i_{k}j}(z)f_{i_{k}}(z),  \quad j=0,1, \ldots, n,
\end{eqnarray*}
where $c_{i_{k}j}$ are meromorphic functions satisfying
\begin{eqnarray*}\label{eaelem21}
T(r,c_{i_{k}j})=O\left(\sum_{i=0}^{n}\sum_{j=0}^{q}T(r,a_{ij})\right) =o(T_{g}(r)).
\end{eqnarray*}
Thus we have
\begin{eqnarray}\label{eaelem22}
% \nonumber to remove numbering (before each equation)
|g_{j}(z)|\leq \sum _{k=0} ^{n}|c_{i_{k}j}(z)|\cdot|f_{i_{k}}(z)|,  \quad j=0,1, \ldots, n.
\end{eqnarray}
For a particular choice of $z\in \mathbb{C}$, let sequence $i_{0}\cdots i_{n}$ be $m_{q-n}\cdots m_{q}$.
Then, combining \eqref{eaelem22} with \eqref{eac} we have

\begin{eqnarray}\label{eaelem23}
% \nonumber to remove numbering (before each equation)
|g_{j}(z)|&\leq& \sum _{k=q-n} ^{q}|c_{m_{k}j}(z)|\cdot|f_{m_{k}}(z)| \nonumber \\
&\leq& \sum _{k=q-n} ^{q}|c_{m_{k}j}(z)|\cdot|f_{m_{\nu}}(z)| \nonumber \\
&\leq& \left(\sum _{k=q-n} ^{q}|c_{m_{k}j}(z)|\right)\cdot|f_{m_{\nu}}(z)|, \nonumber \\
&\leq& \left(\sum _{j=0} ^{n}\sum _{k=q-n} ^{q}|c_{m_{k}j}(z)|\right)\cdot|f_{m_{\nu}}(z)|, \nonumber
\end{eqnarray}
where $\nu=0,\ldots, q-n$ and $j=0,1, \ldots, n$. By defining
\begin{eqnarray*}
A(z)=\sum _{j=0} ^{n}\sum _{k=q-n} ^{q}|c_{m_{k}j}(z)|,
\end{eqnarray*}
we have %$A(z)$ is a possive real value integral function satisfying
\begin{eqnarray}\label{eaelem24}
|g_{j}(z)|\leq A(z)\cdot |f_{m_{\nu}}(z)|
\end{eqnarray}
for all $\nu=0,\ldots, q-n$ and $j=0,1, \ldots, n$, where $$m(r,A(z))\leq \sum _{j=0} ^{n}\sum _{k=q-n} ^{q}m(r,c_{m_{k}j}(z))=o(T_{g}(r)).$$

Next we prove that $A(z)$ is non-zero for all $z\in \mathbb{C}$. From the assumption that $g=[g_{0}: \ldots: g_{n}]$ is a holomorphic curve of $\C$ into $\P^n(\C)$, we have that $\max \{|g_{0}(z)|,\, \ldots,\, |g_{n}(z)|\}>0$ for all $z\in \mathbb{C}$. Since $f_{i_{k}}(z)\, (k=0,\ldots, n)$ are entire
functions, it clearly follows that $|f_{m_{\nu}}(z)|$ gives a finite real number for all $z\in \mathbb{C}$. If there exists $z_{0}\in \mathbb{C}$ such that
$A(z_{0})=0$, then from \eqref{eaelem24}, we have that $|g_{j}(z_{0})|=0$ for all $j=0,1, \ldots, n$,
which contradicts with the fact that
$\max \{|g_{0}(z)|,\, \ldots,\, |g_{n}(z)|\}>0$. Thus $A(z)\not=0$ for all $z\in \mathbb{C}$.

Finally, we will prove that there exist at least $q-n+1$ functions $f_{j}$ that do not vanish at $z$
for all $r$ outside of a set of finite logarithmic measure. To this end we define the sets
\begin{equation}\label{A}
\mathfrak{A}:= \big\{z\in \mathbb{C}: d_{i_{0}\ldots i_{n}}(z)=0, %\textrm{ or } a_{ij}(z)=\infty,
\; \{i_{0},\ldots,i_{n}\}
\subset \{0,\ldots, q\}. \big\}
\end{equation}
and
$$E_{\mathfrak{A}}=\{r: |z|=r, z\in \mathfrak{A}\}.$$
Then we have
    $$E_{\mathfrak{A}} \subseteq \bigcup_{\{i_{0},\ldots,i_{n}\}} E_{d_{i_{0}\ldots i_{n}}} %\cup E_{a_{ij}},
    $$
where $E_{d_{i_{0}\ldots i_{n}}}$% and $E_{a_{ij}}$
is defined as in Lemma \ref{lem2.12}. From Lemma \ref{lem2.12}, we have that
\begin{eqnarray*}\label{eaelem24aa}
\int_{E_{\mathfrak{A}}}\frac{dx}{x}\leq \int_{\bigcup_{\{i_{0},\ldots,i_{n}\}} E_{d_{i_{0}\ldots i_{n}}} %\cup E_{a_{ij}}
}\frac{dx}{x}
\leq \sum_{\{i_{0},\ldots,i_{n}\}}\int_{ E_{d_{i_{0}\ldots i_{n}}}}\frac{dx}{x}%+%\sum\int_{{E_{a_{ij}}}}\frac{dx}{x}
<\infty.
\end{eqnarray*}
So it follows that the set $E_{\mathfrak{A}}$ is of finite logarithmic measure. %If $r\not\in E_{\mathfrak{A}}$   \eqref{sec1thb} \eqref{eaa}

For all $z\in \mathbb{C}\backslash \mathfrak{A}$, we get $d_{i_{0}\ldots i_{n}}(z) \neq 0$. % and $a_{ij}(z)\neq \infty$.
Thus from \eqref{sec2aaa}, we have that $c_{i_{k}j}(z)(k,j=0,\ldots,n)$ are analytic
on $\mathbb{C}\backslash \mathfrak{A}$. Therefore $\sum _{j=0} ^{n}\sum _{k=q-n} ^{q}|c_{m_{k}j}(z)|$ gives a finite real number for
all $z\in \mathbb{C}\backslash \mathfrak{A}$. If there exists a $z_{0}\in \mathbb{C}\backslash \mathfrak{A}$ such that $|f_{m_{\nu}}(z_{0})|=0$ for any $\nu=0,\ldots, q-n$, then from \eqref{eaelem23}, we deduce that
$|g_{j}(z_{0})|=0$ for all $j=0,1, \ldots, n$,
which contradicts with the assumption that
$$\max \{|g_{0}(z)|,\, \ldots,\, |g_{n}(z)|\}>0.$$
Thus $|f_{m_{\nu}}(z_{0})|\not=0$ for all $\nu\in\{0,\ldots, q-n\}$.

If $z_0\in \mathfrak{A}$, then there may be a zero of $f_{m_{\nu}}(z)$ at $z=z_0$ but the order of this zero is bounded by the order of the zero of $d_{i_{0}\ldots i_{n}}(z)$ at $z=z_0$. By going through all points $z\in \mathfrak{A}$, and taking into account that $T(r,d_{i_{0}\ldots i_{n}})=o(T_g(r))$, we obtain \eqref{Nstar}. This completes the proof of Theorem~\ref{lem2.1}.
\end{proof}

\section{Picard's theorem}\label{picardsec}
%(background and statement of the moving targets version, describe shortly the proof [tohge's example])
As an application of the difference analogue of Cartan's theorem, in \cite{halburdkt:14} Halburd, Korhonen and Tohge
obtained a difference analogue of Picard's theorem for holomorphic curves.

\begin{theorem}[\cite{halburdkt:14}] \label{thm1.31}
Let $f: \mathbb{C}\rightarrow \mathbb{P}^{n}$ be a holomorpic curve such that
$\varsigma(f)<1$, let $c\in \mathbb{C}$ and let $p\in \{1, \ldots, n+1\}$. If
$n+p$ hyperplanes in general position have
forward invariant preimages under $f$ with respect to the
translation $\tau(z)=z+c$, then the image of $f$ is contained in a projective
linear subspace over $\mathcal{P}_{c}^{1}$ of dimension $\leq [n/p]$.
\end{theorem}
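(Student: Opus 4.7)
The plan is to argue by contradiction using Theorem~\ref{thm1.2}. Suppose the image of $f$ is not contained in any $\mathcal{P}_c^1$-projective subspace of dimension $\le [n/p]$, and let $k$ be the minimal dimension of one that does contain the image. Then $k\ge [n/p]+1$, equivalently $kp\ge n+1$.

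First I would reduce to a curve in $\P^k$. Pick a reduced representation $G=(g_0,\ldots,g_n)$ of $f$. After a $\mathcal{P}_c^1$-linear change of basis one may assume $g_0,\ldots,g_k$ are $\mathcal{P}_c^1$-linearly independent and that each of $g_{k+1},\ldots,g_n$ is a $\mathcal{P}_c^1$-linear combination of them. Writing the defining $\C$-linear forms of the given $n+p$ hyperplanes as $L_j$ and setting $f_j:=L_j\circ G$, substitution yields $f_j=\sum_{i=0}^{k}b_{ij}g_i$ with $b_{ij}\in \mathcal{P}_c^1$; multiplying through by a common $c$-periodic entire denominator one may take the $b_{ij}$ to be $c$-periodic entire. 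Since the original hyperplanes are in $\C$-general position in $\P^n$, at most $n-k$ of them contain the minimal subspace $V$ spanned by $g_0,\ldots,g_k$ (otherwise the intersection of $n-k+1$ of them would have dimension $<k$, contradicting $V\subseteq$ intersection), so at least $p+k$ of the $f_j$'s do not vanish identically. A short linear algebra argument, using $\C\subset\mathcal{P}_c^1$ and the minimality of $V$, shows that any $k+1$ of these surviving forms are $\mathcal{P}_c^1$-linearly independent as combinations of $g_0,\ldots,g_k$.

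Next I would apply Theorem~\ref{thm1.2} to $\tilde g:=[g_0:\cdots:g_k]$ with the $p+k$ surviving forms (relabelled $f_0,\ldots,f_{p+k-1}$), the roles of $n$ and $q$ in that theorem being played by $k$ and $p+k-1$ respectively, so that $q-n=p-1$. Since $g_{k+1},\ldots,g_n$ are $\mathcal{P}_c^1$-combinations of $g_0,\ldots,g_k$ one has $T_{\tilde g}(r)=T_g(r)+o(T_g(r))$, and Theorem~\ref{thm1.2} gives
\begin{equation*}
(p-1)T_{\tilde g}(r)\le N\!\left(r,\tfrac{1}{L}\right)-N(r,L)+o(T_{\tilde g}(r)),\qquad L=\frac{f_0 f_1\cdots f_{p+k-1}}{C(g_0,\ldots,g_k)},
\end{equation*}
for $r$ outside a set of finite logarithmic measure. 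The core step is then to show the right-hand side is $o(T_{\tilde g}(r))$: for any zero $z_0$ of $f_j$, forward invariance gives $f_j(z_0+\ell c)=0$ for $\ell=0,1,\ldots,k$, and since $b_{ij}(z_0+\ell c)=b_{ij}(z_0)$ the nonzero vector $(b_{0j}(z_0),\ldots,b_{kj}(z_0))$ lies in the kernel of the Casorati matrix $(g_i(z_0+\ell c))_{\ell,i}$, forcing $C(g_0,\ldots,g_k)(z_0)=0$. Careful order bookkeeping, with an $o(T_{\tilde g}(r))$ error absorbing the zeros of the $b_{ij}$'s and the common zeros of more than $k$ of the $f_j$'s controlled by \eqref{Nstar}, then yields $N(r,1/L)-N(r,L)=o(T_{\tilde g}(r))$. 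Combining with the previous inequality gives $(p-1)T_{\tilde g}(r)=o(T_{\tilde g}(r))$, impossible for $p\ge 2$ and non-constant $\tilde g$; the case $p=1$ is vacuous since $[n/1]=n$.

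The main obstacle is the multiplicity matching in the final step. Set-theoretic forward invariance only forces a first-order zero of $C$ at each zero of $f_j$, so to transfer full orders one must either interpret forward invariance divisorially (a zero of order $m$ of $f_j$ at $z_0$ forcing a zero of order $\ge m$ at each $z_0+\ell c$) and then unwind Taylor expansions of $f_j(\cdot+\ell c)$ around $z_0$ to raise the corank of the Casorati matrix by the correct amount, or else pass to truncated counting functions and invoke the truncated difference analogue of the second main theorem developed in Section~\ref{deficiencysec}.
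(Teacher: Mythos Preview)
Your route through the second main theorem is different from the paper's. Theorem~\ref{thm1.31} is only quoted here from \cite{halburdkt:14}; the proof given in this paper for its generalization, Theorem~\ref{thm1.312}, does not invoke Theorem~\ref{thm1.2} at all but instead uses a Borel-type lemma (Lemma~\ref{thm1.3121}) to show that the functions $h_j=L_{H_j}(f,\mathbf a)$ split into equivalence classes under the relation $h_i\sim h_j \iff h_i/h_j\in\mathcal{P}_c^1\setminus\{0\}$, and then bounds the number of those classes. That decomposition is precisely what absorbs the phenomenon that breaks your argument.

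The gap is in the claim that, after restricting to the minimal $\mathcal{P}_c^1$-subspace $V$ of dimension $k$, any $k+1$ of the surviving forms remain $\mathcal{P}_c^1$-linearly independent. This can fail: with $n=2$, $k=1$, and $V$ cut out by $g_2=\gamma g_0$ for some non-constant $\gamma\in\mathcal{P}_c^1$, the constant forms $L_0=x_0$ and $L_1=x_2$ (which can be completed to a system in general position in $\P^2$) restrict to $g_0$ and $\gamma g_0$, both nonzero on $V$ yet $\mathcal{P}_c^1$-proportional. Minimality of $V$ only says $g_0,\ldots,g_k$ are $\mathcal{P}_c^1$-independent; it does not prevent distinct constant hyperplanes from collapsing to the same $\mathcal{P}_c^1$-line in $V^*$, so the general-position hypothesis of Theorem~\ref{thm1.2} need not hold for the restricted system, and your ``short linear algebra argument'' cannot supply it. A second, independent obstruction is that Theorem~\ref{thm1.2} requires $T(r,b_{ij})=o(T_{\tilde g}(r))$, but your $b_{ij}$ are built from the coefficients $\gamma_{im}\in\mathcal{P}_c^1$ in the relations $g_m=\sum_{i\le k}\gamma_{im}g_i$; these are uniquely determined and carry no a~priori smallness bound (already for $k=0$ one has $\gamma_{0m}=g_m/g_0$, whose characteristic is comparable to $T_g(r)$ by Lemma~\ref{lem2.5}, not $o$ of it). The multiplicity issue you flag at the end is real but secondary to these two.
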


Here a preimage of a hyperplane $H\subset\P^n$ under $f$ is said to be \textit{forward invariant} with respect to the translation $\tau_{c}(z)= z+c$ if
    \begin{equation}\label{finv}
    \tau_{c}(f^{-1}(\{H\}))\subset f^{-1}(\{H\})
    \end{equation}
where $f^{-1}(\{H\})$ and $\tau_{c}(f^{-1}(\{H\}))$ are multisets in which each point is repeated according to its multiplicity. Finitely many exceptional values are allowed in the inclusion \eqref{finv} if the holomorphic curve $f$ is transcendental.

As mentioned in the introduction, a natural difference analogue of Picard's theorem would have periodic moving targets. In order to state our generalization to that direction, we first need to define what do we exactly mean by a moving periodic hyperplane.

First, we fix the numbers $n$ and $q(\geq n)$, and observe $q$ moving hyperplanes $H_j(z)$ associated with
$\mathbf{a}_j=\bigl(a_{j0}(z), \ldots, a_{jn}(z)\bigr)$. Let us put $Q:=\{0, \ldots , q\}$ and $N:=\{0, \ldots , n\}$ for convenience. By $\mathcal{K}$ we denote a field containing all the $a_{jk}(z)$\, ($j\in Q,\, k\in N$) and
also $\mathbb{C}$, where $a_{jk}(z)$ are $c$-periodic entire functions.

%$a_{jk}(z)/a_{i\ell}(z)\not\equiv \infty$ $(i,j\in Q; k,\ell\in N)$ and also $\C$.
%We further assume for each $j\in Q$, there exists an $i_j\in N$ such that all the $a_{jk}/a_{ji_j}=b_{jk}$ $(k\in N)$
%are periodic with period~$c$.

Let $H(z)$ be an arbitrary moving hyperplane over the field $\mathcal{K}$ in $\P^{n}$, that is, a hyperplane given by
	\begin{equation} \label{thm1.31eq11}
	H(z)=\Bigl\{[x_0: \cdots : x_n] \in \P^{n} \, : \, a_0(z)x_0+ \cdots + a_n(z)x_n=0\Bigr\}\,,
	\end{equation}
where $a_0, \ldots , a_n$ are $c$-periodic entire functions. % and $a_{\nu}/a_{\mu}\in \mathcal{K}$.
Thus $H(z)$ is associated with a holomorphic mapping
     $$
     \mathbf{a}(z)=\bigl(a_0(z), \ldots , a_n(z)\bigr):\C\to\C^{n+1}.
     $$
Letting $x=[x_0:\cdots:x_n]$, we denote
	$$
	L_H(x,\mathbf{a}(z))=\langle x, \mathbf{a}(z)\rangle=a_0(z)x_0+\cdots+a_n(z)x_n.
	$$
For $x=g=[g_0:\cdots:g_n]$, we then have
	$$
	L_H(g,\mathbf{a}(z))=\langle g(z), \mathbf{a}(z)\rangle=a_0(z)g_0(z)+\cdots+a_n(z)g_n(z),
	$$
and we say that the curve $g$ and the moving hyperplane $H$ is {\it free} if $L_H(g,\mathbf{a}(z))\not\equiv 0$.

\begin{definition}
Moving hyperplanes
$$
H_j(z)=\left\{[x_0: \cdots : x_n]\, :\, \sum_{i=0}^n a_{ji}(z)x_i=0\right\}
$$
in~$\P^{n}$ over $\mathcal{K}$, and  holomorphic mappings $\mathbf{a}_j(z)=\bigl(a_{j0}(z), \ldots , a_{jn}(z)\bigr)$ of~$\C$ into
$\C^{n+1}$ associated with $H_j(z)$, $j=0, \ldots , q$, are given. Let $\widetilde{\mathcal{K}}$ be a field such that $\C \subset \widetilde{\mathcal{K}}$.
We say that $H_0(z), \ldots, H_q(z)$ are \textit{in general position} over $\widetilde{{\mathcal K}}$, if $q\geq n$ and any $n+1$ of the
vectors $\mathbf{a}_j(z)$, $j=0, \ldots , q$, are linearly independent over $\widetilde{\mathcal{K}}$.
\end{definition}

In order to measure the growth of holomorphic mappings associated with moving hyperplanes, we need a modified version of the Cartan characteristic function, and the corresponding notion of hyper-order.

\begin{definition}
Let $\mathbf{a}(z)=(a_0(z), \ldots , a_n(z)):\C\to\C^{n+1}$ be a holomorphic mapping. Then
    $$
    T^*_{\mathbf{a}}(r)=\int_{0}^{2\pi}\sup_{j\in \{0,\ldots, n\}} \log |a_j(re^{i\theta})|\frac{d\theta}{2\pi}
    $$
is the \textit{characteristic function} of $\mathbf{a}$, and
    $$
    \varsigma^*(\mathbf{a})= \limsup_{r\to\infty}\frac{\log^+\log^+ T^*_{\mathbf{a}}(r)}{\log r}
    $$
is the \textit{hyper-order} of $\mathbf{a}$.
\end{definition}

We can now state our generalization of Theorem~\ref{thm1.31}.

\begin{theorem} \label{thm1.312}
Let $f: \mathbb{C}\rightarrow \mathbb{P}^{n}$ be a holomorphic curve such that
$\varsigma(f)<1$, let $c\in \mathbb{C}$, let $p\in \{1, \ldots, n+1\}$. If
$n+p$ moving $c$-periodic hyperplanes $H_{j}$ in general position over $\mathcal{P}_{c}^{1}$ with associated holomorphic mappings $\mathbf{a}_j(z)=\bigl(a_{j0}(z), \ldots , a_{jn}(z)\bigr)$ have
forward invariant preimages under $f$ with respect to the
translation $\tau(z)=z+c$, and
    \begin{equation}\label{bfa}
    \mathbf{a}_{i_{1}\cdots i_{n+2}}=(a_{i_{1}0},\ldots, a_{i_{1}n},a_{i_{2}0},\ldots, a_{i_{2}n}, \ldots, a_{i_{n+2}0},\ldots, a_{i_{n+2}n})
    \end{equation}
satisfies $\varsigma^*(\mathbf{a}_{i_{1}\cdots i_{n+2}})<1$ for all $i_{1}\cdots i_{n+2}$,
then the image of $f$ is contained in a projective
linear subspace over $\mathcal{P}_{c}^{1}$ of dimension $\leq [n/p]$.
\end{theorem}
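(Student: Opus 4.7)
The plan is to argue by contradiction via Theorem~\ref{thm1.2} and iterate. Take a reduced representation $f=[g_{0}:\cdots:g_{n}]$ and set $f_{j}=\sum_{i=0}^{n}a_{ji}g_{i}$ for $j=0,\ldots,q:=n+p-1$. Suppose $g_{0},\ldots,g_{n}$ are linearly independent over $\mathcal{P}_{c}^{1}$ and $p\geq 2$, aiming for contradiction. General position of the $\mathbf{a}_{j}$ combined with linear independence of the $g_{i}$ forces any $n+1$ of the $f_{j}$'s to be linearly independent over $\mathcal{P}_{c}^{1}$: a relation $\sum_{j}\lambda_{j}f_{j}=0$ gives $\sum_{j}\lambda_{j}a_{ji}=0$ for each $i$ and hence all $\lambda_{j}=0$ by general position. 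After verifying that the smallness hypothesis $T(r,a_{ji})=o(T_{g}(r))$ of Theorem~\ref{thm1.2} is available from the hyperorder assumption (see the obstacles below), I apply it with $L=f_{0}\cdots f_{q}/C(g_{0},\ldots,g_{n})$. Using the identity $N(r,1/L)-N(r,L)=\sum_{j}N(r,1/f_{j})-N(r,1/C(g_{0},\ldots,g_{n}))$, valid for any ratio of entire functions, this yields
\begin{equation*}
(p-1)T_{g}(r)\leq\sum_{j=0}^{q}N\!\left(r,\frac{1}{f_{j}}\right)-N\!\left(r,\frac{1}{C(g_{0},\ldots,g_{n})}\right)+o(T_{g}(r)).
\end{equation*}

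The decisive step is to use forward invariance to show the right-hand side is $o(T_{g}(r))$. For any $(n+1)$-subset $J=\{j_{0},\ldots,j_{n}\}\subset\{0,\ldots,q\}$, $c$-periodicity of the coefficients gives the determinant identity
\begin{equation*}
C(f_{j_{0}},\ldots,f_{j_{n}})(z)=\det\bigl[a_{j_{k},i}(z)\bigr]_{0\leq k,i\leq n}\,C(g_{0},\ldots,g_{n})(z).
\end{equation*}
At any zero $z_{0}$ of some $f_{j}$, iterated forward invariance forces $f_{j}$ to vanish at $z_{0}+kc$ to order at least $m_{j}:=\mathrm{ord}_{z_{0}}f_{j}$ for $k=0,1,\ldots,n$, so the row of $f_{j}$ in the Casorati matrix $[f_{j_{k}}(z+\ell c)]_{k,\ell}$ vanishes to order at least $m_{j}$ at $z=z_{0}$; multilinearity of the determinant then yields $\mathrm{ord}_{z_{0}}C(f_{J})\geq\sum_{j\in J(z_{0})}m_{j}$ whenever $J\supseteq J(z_{0}):=\{j:f_{j}(z_{0})=0\}$. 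By the $N^{*}(r)=o(T_{g}(r))$ conclusion of Theorem~\ref{lem2.1}, $|J(z_{0})|\leq n$ outside a negligible set, so such a $J$ always exists; combining with the determinant identity and absorbing each $N(r,1/\det[a_{j_{k},i}])=o(T_{g}(r))$ into the error term produces $\sum_{j}N(r,1/f_{j})\leq N(r,1/C(g_{0},\ldots,g_{n}))+o(T_{g}(r))$. Substituting back gives $(p-1)T_{g}(r)\leq o(T_{g}(r))$, the desired contradiction for $p\geq 2$. Hence $g_{0},\ldots,g_{n}$ must be linearly dependent over $\mathcal{P}_{c}^{1}$, so the image of $f$ lies in a proper $\mathcal{P}_{c}^{1}$-projective linear subspace. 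Passing to the smallest enclosing such subspace $V$, viewing $f$ as a linearly non-degenerate holomorphic curve into $\P^{\dim V}$, and reapplying the single-step argument to the restrictions $H_{j}\cap V$ (which remain proper hyperplanes because $f_{j}\not\equiv 0$ on $V$), one aims to recover the sharp bound $\dim\leq[n/p]$.

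The main obstacle is twofold. First, Theorem~\ref{thm1.2} is stated under the smallness condition $T(r,a_{ji})=o(T_{g}(r))$, whereas only the hyperorder hypothesis $\varsigma^{*}(\mathbf{a}_{i_{1}\cdots i_{n+2}})<1$ is assumed here; bridging this gap will likely require revisiting the proof of Theorem~\ref{thm1.2} and checking that its difference logarithmic derivative estimates go through under the weaker hyperorder hypothesis. Second, the iteration to reach precisely $[n/p]$—preserving enough general position over $\mathcal{P}_{c}^{1}$ of the restricted hyperplanes in each enclosing subspace, and tracking the exact combinatorial arithmetic $(n,p)\mapsto(\dim V,n+p-\dim V)$—is the delicate point, requiring an auxiliary lemma controlling the \emph{general position defect} of the hyperplanes upon restriction.
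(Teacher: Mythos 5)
Your proposal has two genuine gaps, both of which you flag as ``obstacles'' but neither of which is resolved, and both are fatal to the route you chose. First, Theorem~\ref{thm1.2} (and Theorem~\ref{lem2.1}, which you invoke for $N^*(r)=o(T_g(r))$) requires $T(r,a_{ji})=o(T_g(r))$, whereas Theorem~\ref{thm1.312} only assumes the hyper-order condition $\varsigma^*(\mathbf{a}_{i_1\cdots i_{n+2}})<1$; the paper explicitly emphasizes that the coefficients need \emph{not} be small relative to $T_g(r)$, so this is not a technicality one can wave away, and ``revisiting the proof of Theorem~\ref{thm1.2}'' under the weaker hypothesis is not something your argument supplies. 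Second, even granting that step, your contradiction only shows (for $p\geq 2$) that $g_0,\ldots,g_n$ are linearly dependent over $\mathcal{P}_c^1$, i.e.\ the image lies in \emph{some} proper subspace; the sharp bound $[n/p]$ is left as something you ``aim to recover'' by iteration, and that iteration founders exactly where you suspect: after restricting to the minimal enclosing subspace $V$, the traces $H_j\cap V$ need not remain in general position over $\mathcal{P}_c^1$, and the arithmetic $(n,p)\mapsto(\dim V,\,n+p-\dim V)$ does not reproduce the hypothesis of the one-step statement. So the proof is incomplete precisely at the two points where the real content lies.

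For comparison, the paper's proof does not use Theorem~\ref{thm1.2} at all. It sets $h_j=L_{H_j}(f,\mathbf{a}_j)$, partitions the index set $\{1,\ldots,n+p\}$ into equivalence classes $S_k$ by $h_i\sim h_j$ iff $h_i=\alpha h_j$ with $\alpha\in\mathcal{P}_c^1\setminus\{0\}$, and shows each complement $\{1,\ldots,n+p\}\setminus S_k$ has at most $n$ elements: otherwise one finds an $(n+2)$-element set $U_0$ meeting $S_k$ in a single index and a nontrivial relation $\sum_{j\in U_0}c_jh_j\equiv 0$ over $\mathcal{P}_c^1$, and Lemma~\ref{thm1.3121} (the splitting lemma for linear relations among entire functions with forward-invariant zero preimages) forces $c_{s_0}h_{s_0}\equiv 0$, a contradiction. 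The hyper-order hypothesis on $\mathbf{a}_{i_1\cdots i_{n+2}}$ enters only to verify $\varsigma^*(h)<1$ via the inequality $T^*_h(r)\leq T^*_f(r)+T^*_{\mathbf{a}_{i_1\cdots i_{n+2}}}(r)+O(1)$, which is why no smallness of the coefficients is needed. Each class then has at least $p$ elements, so there are at most $(n+p)/p$ classes, and counting the relations contributed by each class inside any $(n+1)$-element index set yields at least $n-n/p$ independent linear relations over $\mathcal{P}_c^1$, giving the dimension bound $[n/p]$ in one pass with no iteration.
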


We have introduced the holomorphic mapping \eqref{bfa} only for the purpose of stating the relevant growth condition for the coordinate functions $a_{ji}$ of $\mathbf{a}_0,\ldots,\mathbf{a}_{n+p}$ in a condensed form. Alternatively this assumption could be replaced with a stronger but simpler condition that each of the coordinate functions $a_{ji}$ satisfy $\varsigma(a_{ji})<1$.
Note that in either case we do not need every element of $\mathbf{a}_{j}$ to be of growth $o(T_{g}(r))$, what is needed here is just that the hyper-order of the holomorphic mapping \eqref{bfa} is strictly less than $1$.

Before going into the proof, we demonstrate the sharpness of Theorem~\ref{thm1.312} by using the following example.

\begin{example}
Since $g(z):=\pi/\Gamma(1-z)=(\sin \pi z) \Gamma(z)$ is an entire
function with only simple zeros on the set of positive
integers, it follows that $g^{-1}(\{0\})=\mathbb{Z}_{> 0}$ is
forward invariant under the shift $\tau(z)=z+1$. On the other
hand, the entire function $h(z):=(\sin \pi z)/\Gamma(z)$ has
simple zeros on $\mathbb{Z}_{> 0}$ and double zeros on the set of
non-positive integers $\mathbb{Z}_{\leq 0}$. Despite of this jump in the multiplicities at the origin, the set of the zeros
of $h(z)$ is still forward invariant with respect to $\tau(z)$ in
our definition. We also note that the gamma function $\Gamma(z)$ is a meromorphic function of order $1$ and maximal type in the plane, in fact, $$T(r,\Gamma)=(1+o(1))\frac{r}{\pi}\log r$$ by, e.g., \cite[Proposition 7.3.6]{cherryy:01}, while $$T(r, \sin \pi z)=2r +O(1)=o(T(r, \Gamma) ), $$ see, e.g., \cite[p.~27]{cherryy:01}.
Further, $\sin \pi z \in \mathcal{P}_1^1$ but $\Gamma \not\in\mathcal{P}_1^1$.

Let us consider the holomorphic curve
$$
f:=\left[\frac{1}{\Gamma(z)}: \frac{1}{\Gamma(z)}:
\frac{1}{\Gamma(z+1/2)}\right]
=\left[1:1:\frac{\Gamma(z)}{\Gamma(z+1/2)}\right]:\mathbb{C}\to\mathbb{P}^2
$$
which has its image in a subset of $\mathbb{P}^2$ of
dimention~$1$. Take the four moving hyperplanes $H_j(z)$ over
$\mathcal{P}^1_c$ with $c=1$, each of which is given respectively by
the vectors
$$
\bigl(\sin \pi z, 0, 0\bigr)\,, \ \bigl(0, \sin \pi (z+1),
0\bigr)\,, \ \bigl(0, 0,\sin \pi (z+1/2)\bigr)\,,
$$
and
$$\bigl(\sin \pi z, \sin \pi(z+1), \sin \pi(z+1/2)\bigr)$$
in $(\mathcal{P}^1_1)^3$ in general position. Now it is easy to see that each of these
hyperplanes has a forward invariant preimage under $f$. For
example, $f^{-1}(\{H_1\})$ coincides with the zeros of the above
entire function $h(z)$. This shows that Theorem~\ref{thm1.312} is sharp in the case where $n=p=2$.

Similarly, when $n=3$ and $p=2, 3$, the bound~$[n/p]=1$ is
attained by the six hyperplanes given by following vectors
in~$(\mathcal{P}_1^1)^4$ in general position with the primitive fourth root of
unity~$\omega$:
$$
(\sin \pi z) \bigl(1, 0, 0, 0\bigr)\,, \ (\sin \pi z) \bigl(0, 1, 0,
0\bigr)\,, \ (\sin \pi z) \bigl(1, \omega, \omega^2, \omega^3
\bigr)\,,
$$
$$
(\cos \pi z)\bigl(0, 0, 1, 0\bigr)\,,  \ (\cos \pi z) \bigl(0, 0, 0,
1\bigr)\,, \ (\cos \pi z) \bigl(1, 1, 1, 1\bigr)\,
$$
and the curve $f:\mathbb{C}\to\mathbb{P}^3$ is given by
\begin{eqnarray*}
f&:=&\left[\frac{1}{\Gamma(z)}: -\frac{1}{\Gamma(z)}:
\frac{\omega}{\Gamma\left(z+\frac{1}{2}\right)}:-\frac{1}{\Gamma\left(z+\frac{1}{2}\right)}\right]\\
&=&\left[1:-1:\omega
\frac{\Gamma(z)}{\Gamma\left(z+\frac{1}{2}\right)}: -
\frac{\Gamma(z)}{\Gamma\left(z+\frac{1}{2}\right)}\right].
\end{eqnarray*}
This $f$ is linearly degenerate in the sense that
$$
f(\mathbb{C})=\bigl\{[z_1:z_2:z_3:z_4]\in\mathbb{P}^3~|~z_1+z_2=0,
\ z_3+ \omega z_4=0 \bigr\} \simeq \mathbb{P}^1\,.
$$
\end{example}

A counter-example is also given to show the best-possibility of the restriction of hyper-order $<1$.

\begin{example}
Consider the holomorphic curve $f(z):=[1 : \exp e^{2\pi i z}]: \mathbb{C}  \to \mathbb{P}^1$, and three two-dimensional constant vectors $(1, 0)$, $(0, -1)$, $(1, -1) $ associating to three hyperplanes of $\mathbb{P}^1$ in general position. It is easy to see that the roots of the linear equation $$ \langle (1, \exp e^{2\pi i z}), (1, -1)\rangle = 1- \exp e^{2\pi i z} = 0 $$ are forward invariant with respect to $\tau(z)=z+1$, since they are of the form $$ z = \frac{1}{2\pi i}\log(2 m \pi) \pm \frac{1}{4} + k $$ for $m\in\mathbb{Z}_{>0}$ and $k\in\mathbb{Z}$.
(For $\tau(z) = \frac{1}{2\pi i}\log(2 m \pi) \pm \frac{1}{4} + (k + 1).
$)
On the other hand, $[n/p]=[1/2]=0$, but $f(z)$ satisfies $f\not\in \mathcal{P}_{1}^{1}$.
\end{example}

In order to prove Theorem~\ref{thm1.312}, we need to introduce the following lemma, which is a generalization of \cite[Theorem 3.1]{halburdkt:14}. Since the proof is a simple modification of the proof of \cite[Theorem 3.1]{halburdkt:14}, we omit the details.

%\begin{lemma}\label{thm1.31211}
%Let $c\in \mathbb{C}$, and let $g=[g_{0}:\cdots:g_{n}]$ be a holomorphic curve such
%that $\varsigma(g)<1$ and such that all zeros of $g_{0},\, \ldots,\, g_{n}$ are forward
%invariant with respect to the translation $\tau(z)=z+c$. If $g_{i}/g_{j}\not\in \mathcal{P}_{c}^{1}$
%for all $i,j\in \{0, \ldots, n\}$ such that $i\neq j$, then $g_{0},\, \ldots,\, g_{n}$
%are linearly independent over $\mathcal{P}_{c}^{1}$.
%\end{lemma}

\begin{lemma}\label{thm1.3121}
Let $c\in \mathbb{C}$, and let $g=[g_{0}:\cdots:g_{n}]$ be a holomorphic
curve such that $\varsigma^{*}(g)<1$ and such that preimages of all zeros
of $g_{0},\, \ldots,\, g_{n}$ are forward invariant with respect to the translation
 $\tau(z)=z+c$. Let
 $$S_{1}\cup\cdots \cup S_{l}$$
 be the partition of $\{0,\, \ldots,\, n\}$ formed in such a way that
 $i$ and $j$ are in the same class $S_{k}$ if and only if $g_{i}/g_{j}\in\mathcal{P}_{c}^{1}$.
 If
 \begin{eqnarray}\label{thm1.3121eq1}
 % \nonumber to remove numbering (before each equation)
   \sum_{i=0}^{n}c_{i}g_{i}\equiv 0,
 \end{eqnarray}
 where $c_{i}\in\mathcal{P}_{c}^{1}$, then
 \begin{eqnarray*}
 % \nonumber to remove numbering (before each equation)
   \sum_{i\in S_{k}}c_{i}g_{i}\equiv 0
 \end{eqnarray*}
 for all $k\in \{1, \ldots, l\}$.
 \end{lemma}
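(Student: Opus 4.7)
The plan is to exploit the structure of the equivalence classes through the action of the shift $\tau_c(z)=z+c$. The central observation is that each class sum $F_k:=\sum_{i\in S_k}c_ig_i$ is a $\tau_c$-eigenfunction. Indeed, for any two $i,j\in S_k$ the quotient $g_i/g_j$ lies in $\mathcal{P}_c^1$, hence is $c$-periodic, which forces $\tau_cg_i/g_i=\tau_cg_j/g_j$. Denote this common ratio by $\pi_k$; it is entire because the preimages of the zeros of each $g_i$ are forward invariant under $\tau_c$, so the zeros of the numerator cover those of the denominator. Using the $c$-periodicity of each $c_i$ one immediately verifies $\tau_c F_k=\pi_k F_k$. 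Moreover, the $\pi_k$ are pairwise distinct as meromorphic functions: if $\pi_k=\pi_{k'}$ with $k\neq k'$, then $\tau_c(g_i/g_j)=g_i/g_j$ for $i\in S_k$, $j\in S_{k'}$, placing $g_i/g_j$ in $\mathcal{P}_c^1$ (its hyper-order is controlled by $\varsigma^*(g)<1$), contradicting $i\not\sim j$.

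With this structure in place, the hypothesis $\sum_{i=0}^n c_ig_i\equiv 0$ becomes $\sum_{k=1}^{l}F_k\equiv 0$, and the task reduces to the discrete Borel-type statement that $\tau_c$-eigenfunctions with pairwise distinct eigenvalues are linearly independent. The natural implementation is to apply $\tau_c^m$ to $\sum_k F_k=0$ for $m=0,1,\ldots,l-1$, which using $\tau_c^m F_k=\Pi_k^{(m)}F_k$ with $\Pi_k^{(m)}:=\prod_{s=0}^{m-1}\tau_c^s\pi_k$ (and $\Pi_k^{(0)}=1$) produces the homogeneous linear system
\begin{eqnarray*}
\sum_{k=1}^l \Pi_k^{(m)}F_k=0,\qquad m=0,\ldots,l-1.
\end{eqnarray*}
The coefficient matrix is a $\tau_c$-Vandermonde whose determinant, once shown to be a nonzero meromorphic function, yields $F_k\equiv 0$ for all $k$ via Cramer's rule. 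As a sanity check, the case $l=2$ is immediate: $F_1+F_2=0$ combined with $\tau_cF_1+\pi_2^{-1}\pi_1\cdot(\tau_cF_1)=0$ (suitably normalized) gives $(\pi_1-\pi_2)F_1=0$, forcing $F_1\equiv 0$ because $\pi_1\neq\pi_2$.

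The main obstacle is precisely the non-vanishing of this $\tau_c$-Vandermonde for general $l$. In the constant-coefficient case $\pi_k\in\mathbb{C}$ it reduces to the classical product $\prod_{k<k'}(\pi_{k'}-\pi_k)$, but in our non-autonomous setting it is a more intricate combination of iterated shifts of the $\pi_k$, and a shift-and-subtract induction on $l$ has to be performed carefully: from $\sum_k F_k=0$ and $\sum_k \pi_kF_k=0$ one extracts $\sum_{k=1}^{l-1}(\pi_k-\pi_l)F_k=0$ and iterates, tracking how the induced eigenvalues $\pi_k(\tau_c\pi_k-\tau_c\pi_l)/(\pi_k-\pi_l)$ remain pairwise distinct. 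Since the $c_i\in\mathcal{P}_c^1$ commute with $\tau_c$, this is a cosmetic modification of the argument for the constant-coefficient case in \cite[Theorem 3.1]{halburdkt:14}, which is precisely the reason the authors omit the details.
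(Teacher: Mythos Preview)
Your closing observation is the right one, and it matches the paper exactly: since the $c_i\in\mathcal{P}_c^1$ commute with $\tau_c$, the class sums $F_k=\sum_{i\in S_k}c_ig_i$ are $\tau_c$-eigenfunctions just as when the $c_i$ are constants, so whatever argument proves \cite[Theorem~3.1]{halburdkt:14} carries over unchanged. The paper says precisely this and omits the details, so at the level of the final deferral you and the paper agree.

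The problem is the specific mechanism you propose. Since $\Pi_k^{(m)}=\overline{g_{i_k}}^{[m]}/g_{i_k}$ for any representative $i_k\in S_k$, your coefficient determinant is $C(g_{i_1},\ldots,g_{i_l})\big/\prod_k g_{i_k}$, and by Lemma~\ref{lem2.6} its nonvanishing is exactly the assertion that representatives of distinct classes are linearly independent over $\mathcal{P}_c^1$ --- which is equivalent to the lemma, not a step towards it. Your shift-and-subtract induction does not close this circle: the induced ratios $\tilde\pi_k=\pi_k(\tau_c\pi_k-\tau_c\pi_l)/(\pi_k-\pi_l)$ need not remain pairwise distinct, and the new functions $G_k=(\pi_k-\pi_l)F_k$ need not inherit forward-invariant zeros, so the hypotheses do not propagate. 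More tellingly, your argument uses forward invariance only to make $\pi_k$ entire, yet the lemma is \emph{false} without that hypothesis: take $g_0=1$, $g_1=z$, $g_2=-1-z$ with $c_i=1$ and any $c\neq 0$; all pairwise ratios are non-periodic, so each class is a singleton, but no class sum vanishes (and indeed $C(g_0,g_1,g_2)\equiv 0$). The argument in \cite{halburdkt:14} is not a Vandermonde computation but an application of the difference second main theorem: one passes to a minimal nontrivial sub-relation among the nonzero $F_k$, and the forward invariance of the zeros of each $F_k=\phi_k g_{i_k}$ forces them to be absorbed by the Casorati determinant in $L$, driving $N(r,1/L)-N(r,L)$ down to $o(T(r))$ and contradicting Theorem~\ref{thm1.1}.
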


\begin{proof}[Proof of Theorem~\ref{thm1.312}]
Let $x=[x_{0}:\cdots:x_{n}]$, and let $L_{H_j}(z)\, (j=1, \ldots, n+p)$ be the linear forms defining the hyperplanes $H_j(z)=0$ as in \eqref{thm1.31eq11}. Since
by assumption any $n+1$ of the hyperplanes $H_{j},\, j=1, \ldots, n+p$,
are linearly independent over $\mathcal{P}_{c}^{1}$, it follows that
any $n+2$ of the forms $L_{H_j}(z)$ satisfy a linear relation with coefficients none of which vanishes identically in
$\mathcal{P}_{c}^{1}$. By writing $\tau(z)=z+c$, it follows by assumption that the functions $h_{j}=L_{H_{j}}(f,a)=a_{j0}f_{0}+\cdots+a_{jn}f_{n}$ satisfy
    \begin{eqnarray*}
    \{\tau(h_{j}^{-1}(\{0\}))\}\subset \{h_{j}^{-1}(\{0\})\}
    \end{eqnarray*}
for all $j=1,\ldots, n+p$, where $\{\cdot\}$ denotes a multiset which takes into
account the multiplicities of its elements.

The set of indexes $\{1, \ldots, n+p\}$ may be split into disjoint equivalence classes
$S_{k}$ by saying that $i\sim j$ if $h_{i}=\alpha h_{j}$ for some $\alpha \in \mathcal{P}_{c}^{1}\backslash \{0\}$.
Therefore
    \begin{eqnarray*}
    \{1, \ldots, n+p\}=\bigcup_{j=1}^{N}S_{j}
    \end{eqnarray*}
for some $N\in \{1, \ldots, n+p\}$.

Suppose that the complement of $S_{k}$ has at least $n+1$ elements for some
$k\in \{1, \ldots, N\}$. Choose an element $s_{0}\in S_{k}$, and denote $U=\{1,\ldots, n+p\}\backslash S_{k}\cup \{s_{0}\}$.
Since the set $U$ contains at least $n+2$ elements, there exists a subset $U_{0}\subset U$
 such that $U_{0}\cap S_{k}=\{s_{0}\}$ and $\card (U_{0})=n+2$. Therefore, there exists
 $c_{j}\in \mathcal{P}_{c}^{1}\backslash \{0\}$ such that
\begin{eqnarray}\label{mnbvcxz}
% \nonumber to remove numbering (before each equation)
  \sum_{j\in U_{0}} c_{j}h_{j}\equiv 0.
\end{eqnarray}

Denote $h=[h_{i_{1}}: \cdots : h_{i_{n+2}}]$. In order to apply Lemma~\ref{thm1.3121} to deduce a contradiction, we need to prove
$\varsigma^{*}(h)<1$. To this end, let
    $$u(z)=\sup_{k\in \{0,\ldots,n\}}\log |f_{k}(z)|$$
and
   $$v_{i_{j}}(z)=\sup_{k\in \{0,\ldots,n\}}\log |a_{i_{j}k}(z)| \qquad (j=1,\ldots,n+2).$$
Then we have
\begin{eqnarray}\label{sec3eqa}
% \nonumber to remove numbering (before each equation)
  \log |h_{i_{j}}|&=&\log |a_{i_{j}0}f_{0}+\cdots+a_{i_{j}n}f_{n}| \nonumber\\
  &\leq& \log \left(|a_{i_{j}0}f_{0}|+\cdots+|a_{i_{j}n}f_{n}| \right)\nonumber\\
  &\leq& \log (n+1)\cdot e^{u}\cdot e^{v_{i_{j}}} \nonumber\\
   &\leq& u+v_{i_{j}}+O(1)\nonumber\\
   &=& \sup_{k\in \{0,\ldots,n\}}\log |f_{k}|+ \sup_{k\in \{0,\ldots,n\}}\log |a_{i_{j}k}|+O(1)
\end{eqnarray}
for any $z$ satisfying $|h_{i_{j}}(z)|\neq 0$ and $\sup_{k\in \{0,\ldots,n\}}|a_{i_{j}k}(z)| \neq 0$.
Thus we have
\begin{eqnarray*}
% \nonumber to remove numbering (before each equation)
  \sup_{j\in \{1,\ldots, n+2\}}\log |h_{i_{j}}|\leq \sup_{k\in \{0,\ldots,n\}}\log |f_{k}|
  +\sup_{j\in \{1,\ldots, n+2\}}\sup_{k\in \{0,\ldots,n\}}\log |a_{i_{j}k}|+O(1)
\end{eqnarray*}
for any $z$ satisfying
\begin{equation}\label{notzero}
\begin{split}
& \sup_{j\in \{1,\ldots, n+2\}}|h_{i_{j}}(z)|\neq 0,\\
& \sup_{j\in \{1,\ldots, n+2\}} \sup_{k\in \{0,\ldots,n\}}|a_{i_{j}k}(z)| \neq 0.
\end{split}
\end{equation}
This gives that
\begin{eqnarray}\label{sec3eqb}
% \nonumber to remove numbering (before each equation)
 &&  \int_{0}^{2\pi}\sup_{j\in \{1,\ldots, n+2\}} \log |h_{i_{j}}(re^{i\theta})|\frac{d\theta}{2\pi}
  \leq\int_{0}^{2\pi}\sup_{k\in \{0,\ldots,n\}}\log |f_{k}(re^{i\theta})|\frac{d\theta}{2\pi}\nonumber\\
  &&+
  \int_{0}^{2\pi}\sup_{j\in \{1,\ldots, n+2\}}\sup_{k\in \{0,\ldots,n\}}\log |a_{i_{j}k}(re^{i\theta})|\frac{d\theta}{2\pi}+O(1)
\end{eqnarray}
for those positive $r$ for which the functions in \eqref{notzero} have no zeros on $|z|=r$.

Suppose that $\sup_{j\in \{1,\ldots, n+2\}}|h_{i_{j}}|$ has infinitely many zeros on the circle $\{z:|z|=r\}$
(where $r>0$). Then from Bolzano--Weierstrass theorem, there exists a  convergent subsequence $z_{t}\to z_{0}$
 as $t \to \infty$ satisfying
 $$
 \sup_{j\in \{1,\ldots, n+2\}}|h_{i_{j}}(z_{t})|=\sup_{j\in \{1,\ldots, n+2\}}|h_{i_{j}}(z_{0})|=0.
 $$
Thus we have $|h_{i_{j}}(z_{t})|=|h_{i_{j}}(z_{0})|=0\,
 (j=1,\ldots, n+2)$, i.e., $h_{i_{j}}(z_{t})=h_{i_{j}}(z_{0})=0\, (j=1,\ldots, n+2)$. Since $h_{i_{j}}\,(j=1,\ldots, n+2)$
 are all entire functions, it follows from the identity theorem of holomorphic functions, that $h_{i_{j}} \equiv 0 \,(j=1,\ldots, n+2)$, a contradiction.
Similarly it follows that also $\sup_{j\in \{1,\ldots, n+2\}} \sup_{k\in \{0,\ldots,n\}}|a_{i_{j}k}|$  can have at most finitely many zeros on the circle $\{z:|z|=r\}$.

%If entire functions $h_{i_{j}},\, f_{k},\, a_{i_{j}k}$ have zeros infinitely on the circle $\{z:|z|=r\}$
%(where $r>0$). Then from Uniqueness Theorem, we have $h_{i_{j}},\, f_{k},\, a_{i_{j}k} \equiv 0$, a contradiction.

If either one of the functions  $\sup_{j\in \{1,\ldots, n+2\}}
\sup_{k\in \{0,\ldots,n\}}|a_{i_{j}k}|$ and $\sup_{j\in \{1,\ldots, n+2\}}|h_{i_{j}}|$  have a finite number of zeros on the circle $\{z:|z|=r\}$
(where $r>0$), we refer to \cite{gundersenh:04} for the method of proof on how to deal with this case. Here in convenience for the readers,
we give the details of the proof for \eqref{sec3eqb}, following \cite{gundersenh:04}.  For those $r$ where there are zeros on the circle of radius $r$, we modify the path of integration slightly in order to avoid having zeros on the path. This is done by integrating the three integrands in \eqref{sec3eqb} around a curve $\gamma=\gamma(r,\delta)$ consisting of arcs of $|z|=r$ and small
``recesses'' of sufficiently small radius $\delta$ about each zero of $\sup_{j\in \{1,\ldots, n+2\}}|h_{i_{j}}|$ and
 $\sup_{j\in \{1,\ldots, n+2\}}
\sup_{k\in \{0,\ldots,n\}}|a_{i_{j}k}|$ on
$|z|=r$, such that these functions have no zeros on the new path of integration for any $\delta>0$. In this case, \eqref{sec3eqb} holds when the path of integration is replaced by
$\gamma$. Letting $\delta \to 0$, it follows that on each small recess
the integrands on both sides of the inequality \eqref{sec3eqb} are of the form $O(-\log \delta)$,
and the length of the recess is of the form $O(\delta)$. This implies that the corresponding integrals
around each recess tend to zero as $\delta\to0$. Since the curve $\gamma$ approaches the
circle $|z|=r$ as $\delta \to 0$, it follows that \eqref{sec3eqb} holds on $|z|=r$.

We have shown that \eqref{sec3eqb} holds for all positive $r$.
We set
$$T^*_{h}(r)=\int_{0}^{2\pi}\sup_{j\in \{1,\ldots, n+2\}} \log |h_{i_{j}}(re^{i\theta})|\frac{d\theta}{2\pi},$$
$$T^*_{f}(r)=\int_{0}^{2\pi}\sup_{k\in \{0,\ldots,n\}}\log |f_{k}(re^{i\theta})|\frac{d\theta}{2\pi},$$
and
$$T^*_{\mathbf{a}_{i_{1}\cdots i_{n+2}}}(r)=\int_{0}^{2\pi}\sup_{j\in \{1,\ldots, n+2\}}\sup_{k\in \{0,\ldots,n\}}\log |a_{i_{j}k}(re^{i\theta})|\frac{d\theta}{2\pi}.$$
Then from \eqref{sec3eqb} we have
    \begin{equation}\label{thm1.3121eq1212}
    T^*_{h}(r)\leq T^*_{f}(r)+T^*_{\mathbf{a}_{i_{1}\cdots i_{n+2}}}(r)+O(1).
    \end{equation}
Since by assumption $\varsigma(f)<1$ and $\varsigma^*(\mathbf{a}_{i_{1}\cdots i_{n+2}})<1$, it follows by \eqref{thm1.3121eq1212} that
    \begin{equation}\label{thm1.3121eq1212a}
    \varsigma^{*}(h)<1
    \end{equation}
and we can hence apply Lemma~\ref{thm1.3121}.

By using Lemma~\ref{thm1.3121}, from \eqref{mnbvcxz} we get that
\begin{eqnarray*}
% \nonumber to remove numbering (before each equation)
c_{s_{0}}h_{s_{0}} \equiv 0,
\end{eqnarray*}
which is a contradiction. So we have that the set $\{1, \ldots, n+p\}\backslash S_{k}$
has at most $n$ elements. Hence $S_{k}$ has at least $p$ elements
for all $k=1,\ldots, N$, and it follows that $N\leq (n+p)/p$.

Let $V$ be any subset of $\{1, \ldots, n+p\}$ with exactly $n+1$ elements.
Then the forms $L_{H_j}$, $j\in V$, are linearly independent. By denoting
$V_{k}=V\cap S_{k}$ it follows that
\begin{eqnarray*}
% \nonumber to remove numbering (before each equation)
  V=\bigcup _{k=1}^{N}V_{k}.
\end{eqnarray*}
Since each set $V_{k}$ gives raise to $\card (V_{k})-1$ equations over the
field $\mathcal{P}_{c}^{1}$, it follows that
we have at least
\begin{eqnarray*}
% \nonumber to remove numbering (before each equation)
  \sum_{k=1}^{N}\left(\card (V_{k})-1 \right)=n+1-N \geq n+1-\frac{n+p}{p}=n-\frac{n}{p}
\end{eqnarray*}
linearly independent relations over the field $\mathcal{P}_{c}^{1}$. Therefore the
image of $f$ is contained in a linear subspace over $\mathcal{P}_{c}^{1}$ of
dimension $\leq [n/p]$, as desired.
\end{proof}

\section{Difference analogues of truncated second main theorem}\label{deficiencysec}

In this section we introduce two alternative difference analogues of the truncated second main theorem, and give corresponding difference deficiency relations. We start with a definition of the difference counterpart of the concept of truncation.

\begin{definition}
Let $n\in \N$, $c\in\C\setminus\{0\}$ and $a\in \P$.
An $a$-point $z_0$ of a meromorphic function $h(z)$ is said to be $n$-{\bf successive} and
$c$-{\bf separated}, if the $n$ entire functions $h(z+\nu c)$ $(\nu=1,\ldots, n)$ take
the value~$a$ at $z=z_0$ with multiplicity not less than that of $h(z)$ there.
All the other $a$-points of $h(z)$ are called $n$-{\bf aperiodic} of {\bf pace}~$c$.
By $\tilde{N}_g^{[n,c]}(r,L_{H})$ we denote the counting function of $n$-{\bf aperiodic} zeros
of the function $L_H(g,\mathbf{a})=\langle g(z), \mathbf{a}(z)\rangle$ of pace~$c$.
\end{definition}

Note that $\tilde{N}_g^{[n,c]}(r,L_H)\equiv 0$ when all the zeros of $L_H(g,\mathbf{a})$
with taking their multiplicities into account are located periodically with period~$c$.
This is also the case when the hyperplane~$H$ is forward invariant by~$g$
with respect to the translation $\tau_c(z)=z+c$, i.e.
$\tau_c\bigl(g^{-1}(\{H\})\bigr) \subset g^{-1}(\{H\})$ holds.
In fact, it follows by definition that any zero with a forward invariant preimage of the function $L_H(g,\mathbf{a})$ must
be $n$-successive and $c$-separated, since
$$
g^{-1}(\{H\}) \subset \tau_{-c}\bigl(g^{-1}(\{H\})\bigr) \subset \tau_{-(n-1)c}\bigl(g^{-1}(\{H\})\bigr).
$$
In addition, we denote
\begin{eqnarray*}
% \nonumber to remove numbering (before each equation)
  N_{g}(r,L_H)=N\left(r,\frac{1}{L_H(g,\mathbf{a})} \right)=N\left(r,\frac{1}{\langle g(z), \mathbf{a}(z)\rangle} \right)
\end{eqnarray*}
and
\begin{eqnarray*}
% \nonumber to remove numbering (before each equation)
  N_{C}(r,0)=N\left(r,\frac{1}{C(g_{0},\ldots,g_{n})}\right).
\end{eqnarray*}

We give the following short notation to be used through the remainder of this paper.
Let $g(z)$ be a meromorphic function, and let $c\in \mathbb{C}$, we set
\begin{eqnarray*}
g(z)\equiv g, \, g(z+c)\equiv \overline{g}, \, g(z+2c)\equiv \overline{\overline{g}}\,  \textrm{and}\,  g(z+nc)\equiv \overline{g}^{[n]}
\end{eqnarray*}
to suppress the $z$-dependence of $g(z)$. The Casorati determinant of $g_{0},\, \ldots,\, g_{n}$ is then defined by
\begin{displaymath}
C(g_{0},\, \ldots,\, g_{n}) =
\left| \begin{array}{cccc}
g_{0} & g_{1} & \ldots & g_{n} \\
\overline{g}_{0} & \overline{g}_{1} & \ldots  &  \overline{g}_{n} \\
\vdots & \vdots & \ddots & \vdots   \\
\overline{g}_{0}^{[n]} & \overline{g}_{1}^{[n]} &  \ldots & \overline{g}_{n}^{[n]}
\end{array} \right|.
\end{displaymath}

With these definitions in hand we can show the following auxiliary result.

\begin{lemma}\label{movinglemma}
Let $g$ be a holomorphic curve of $\C$ into $\P^n(\C)$, let $n\in\N$ and $q\in\N$ be such that $q\geq n$, and let
    $$
    \mathbf{a}_j(z) = (a_{j0},\ldots,a_{jn}), \qquad j\in\{0,\ldots,q\},
    $$
where $a_{jk}(z)$ are $c$-periodic entire functions satisfying $T(r,a_{jk})=o(T_{g}(r))$ for all $j,k\in\{0,\ldots,q\}$.
If the moving hyperplanes
    \begin{equation}\label{Hj}
    H_j(z) = \left\{[x_0:\cdots:x_n] : L_{H_j}(x,\mathbf{a}_j(z))=0\right\}, \qquad j\in\{0,\ldots,q\},
    \end{equation}
are located in general position, then
\begin{equation}\label{eq}
\sum_{j=0}^q N_g(r,L_{H_{j}}) -N_{C}(r, 0) \leq \sum_{j=0}^q \tilde{N}_g^{[n,c]}(r, L_{H_{j}})+o(T_{g}(r)).
\end{equation}
\end{lemma}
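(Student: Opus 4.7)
The plan is to reduce the estimate to a pointwise order-of-vanishing comparison via the key algebraic identity
$$C(f_{i_{0}},\ldots,f_{i_{n}}) \;=\; d_{i_{0}\cdots i_{n}}\cdot C(g_{0},\ldots,g_{n}),$$
where $f_{j} := L_{H_{j}}(g,\mathbf{a}_{j}) = \sum_{i=0}^{n} a_{ji} g_{i}$ and $d_{i_{0}\cdots i_{n}}$ is the $(n+1)\times(n+1)$ determinant of the coefficient matrix $(a_{i_{k}l})_{0\le k,l\le n}$, exactly as in \eqref{eaeee}. This identity is an immediate consequence of the $c$-periodicity of the $a_{ji}$: because $a_{ji}(z+\nu c) = a_{ji}(z)$, one has $\overline{f}_{j}^{[\nu]} = \sum_{i} a_{ji}\,\overline{g}_{i}^{[\nu]}$, so the Casorati matrix of $f_{i_{0}},\ldots,f_{i_{n}}$ factors as the Casorati matrix of $g_{0},\ldots,g_{n}$ times the transpose of the coefficient matrix. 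The identity then follows by taking determinants.

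Pointwise at a generic $z_{0}\in\C$, put $\mu_{j}:=\operatorname{ord}_{z_{0}}f_{j}$ and $I(z_{0}):=\{j:\mu_{j}>0\}$. Theorem~\ref{lem2.1} applied to the $f_{j}$ guarantees that $|I(z_{0})|\le n$ off the exceptional set $\mathfrak{A}$ introduced in \eqref{A}. For such $z_{0}$, extend $I(z_{0})$ to an $(n+1)$-tuple $\{i_{0},\ldots,i_{n}\}\subset\{0,\ldots,q\}$, so that $\sum_{j=0}^{q}\mu_{j}=\sum_{k=0}^{n}\mu_{i_{k}}$. Setting
$$m_{i_{k}} \;:=\; \min_{\nu=0,\ldots,n}\operatorname{ord}_{z_{0}}f_{i_{k}}(z+\nu c),$$
the factor $(z-z_{0})^{m_{i_{k}}}$ can be pulled out of the $k$-th column of the Casorati matrix of $f_{i_{0}},\ldots,f_{i_{n}}$, so $\operatorname{ord}_{z_{0}}C(f_{i_{0}},\ldots,f_{i_{n}}) \ge \sum_{k=0}^{n} m_{i_{k}}$. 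By the very definition of an $n$-successive $c$-separated zero, the excess $\mu_{i_{k}} - m_{i_{k}}$ equals $0$ when $z_{0}$ is such a zero of $f_{i_{k}}$, and otherwise is bounded by $\mu_{i_{k}}$, i.e. by the contribution of $z_{0}$ to $\tilde{N}_{g}^{[n,c]}(\cdot,L_{H_{i_{k}}})$. Combining with the identity yields the local bound
$$\sum_{j=0}^{q}\mu_{j} - \operatorname{ord}_{z_{0}}C(g_{0},\ldots,g_{n}) \;\le\; \sum_{j=0}^{q}\tilde{\mu}_{j} + \operatorname{ord}_{z_{0}} d_{i_{0}\cdots i_{n}},$$
where $\tilde{\mu}_{j}$ denotes the $n$-aperiodic contribution of $z_{0}$ to $f_{j}$, and the final term vanishes whenever $z_{0}\notin\mathfrak{A}$.

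Finally, I would integrate this inequality against $dt/t$ over $[0,r]$ to convert orders of vanishing into counting functions. Since the choice of $\{i_{0},\ldots,i_{n}\}$ depends on $z_{0}$, I would dominate $\operatorname{ord}_{z_{0}} d_{i_{0}\cdots i_{n}}$ by $\sum_{J}\operatorname{ord}_{z_{0}} d_{J}$ over all $(n+1)$-element subsets $J\subset\{0,\ldots,q\}$; each $N(r,1/d_{J})\le T(r,d_{J})+O(1) = o(T_{g}(r))$ by the small-function hypothesis on the $a_{jk}$, and the sum is over a fixed finite collection, so it remains $o(T_{g}(r))$. The residual contribution coming from points $z_{0}\in\mathfrak{A}$ where $|I(z_{0})|>n$ is precisely the situation absorbed by $N^{*}(r) = o(T_{g}(r))$ of Theorem~\ref{lem2.1}. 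Adding these $o(T_{g}(r))$ terms produces \eqref{eq}. The main obstacle that I expect is the bookkeeping at points of $\mathfrak{A}$, where the reduction via the identity can degenerate because $d_{i_{0}\cdots i_{n}}$ itself vanishes; these points must be controlled simultaneously by the $N^{*}$-bound and by the growth of the determinants $d_{J}$, and the two error sources have to be packaged together into the single $o(T_{g}(r))$-term on the right-hand side of \eqref{eq}.
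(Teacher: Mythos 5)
Your proposal is correct and follows essentially the same route as the paper's proof: the factorization $C(f_{i_0},\ldots,f_{i_n})=d_{i_0\cdots i_n}\,C(g_0,\ldots,g_n)$ coming from $c$-periodicity of the coefficients, the column-wise extraction of the minimal vanishing order from the Casorati matrix, and the absorption of the points of $\mathfrak{A}$ and of the zeros of the coefficient determinants into the $o(T_g(r))$ term via $N^*(r)$ and $T(r,d_J)=o(T_g(r))$. Your explicit pointwise split of $\mu_{i_k}-m_{i_k}$ into the successive-separated case (excess $0$) and the aperiodic case (excess $\le\mu_{i_k}$) is in fact slightly more detailed than the paper's write-up, which only treats the successive-separated case explicitly.
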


\begin{proof}
By Theorem~\ref{lem2.1} the counting function $N^*(r)$ for those points where more than $n$ functions $ L_{H_j}$ vanish simultaneously is of the growth
    \begin{equation}\label{Nstar2}
    N^*(r) = o(T_g(r)).
    \end{equation}
The contribution to \eqref{eq} from such points can therefore be incorporated in the error term.

Suppose now that $z_0$ is an $n$-successive $c$-separated zero of $L_{H_j}$ for some $j\in \{0,\ldots,q\}$. By \eqref{Nstar2}, and Theorem~\ref{lem2.1},  we may assume that there are at most $n$ indexes within $\{0,\ldots,q\}$ such that $L_{H_j}(z_0)=0$. Therefore, by reordering the indexes if necessary we may assume that $L_{H_j}(z_0)\not=0$ for all $j\in\{n,\ldots,q\}$, and thus there is no contribution to the counting functions $N_g(r, L_{H_n}),\ldots,N_g(r, L_{H_q})$ from the point $z_0$.

Now, there are integers $m_{j}(\geq 0)$ and holomorphic functions $h_{jk}(z)$ in a neighborhood~$U$ of~$z_0$
such that
\begin{equation}\label{mj}
L_{H_j}(z+k c)=\langle g(z+k c), \mathbf{a}_{j}(z+k c) \rangle = (z-z_0)^{m_{j}} h_{jk}(z) \quad \text{for} \
0\leq j,k \leq n.
\end{equation}
Here, for convenience, we set $m_{j}= 0$ whenever $\langle g(z_{0}), \mathbf{a}_j(z_{0})\rangle \neq 0$. Since
$$
L_{H_j}(z)=\langle g(z), \mathbf{a}_{j}(z) \rangle=\sum_{k=0}^n a_{jk}(z)g_k(z),
$$
where $a_{jk}$ are $c$-periodic entire functions satisfying $T(r,a_{jk})=o(T_{g}(r))$, it follows that
\begin{eqnarray*}
& & \hspace{-15mm} \left(
\begin{array}{cccc}
L_{H_0} 	& 	L_{H_1} 	& \cdots 	& L_{H_n} \\[1ex]
\overline{L}_{H_0}		& \overline{L}_{H_1} 	& \cdots 	& \overline{L}_{H_n} \\[1ex]
\vdots  	& 	\vdots 	& 		& 	\vdots 	\\[1ex]
\overline{L}_{H_0}^{[n]} 	& \overline{L}_{H_1}^{[n]} 	& 	\cdots 	& \overline{L}_{H_n}^{[n]}
\end{array}
\right)
=
\left(
\begin{array}{cccc}
g_0 	& 	g_1 	& \cdots 	& g_n 		\\[1ex]
\overline{g}_0		& \overline{g}_1 	& \cdots 	& 	\overline{g}_n 	\\[1ex]
\vdots  	& 	\vdots 	& 		& 	\vdots 	\\[1ex]
\overline{g}_0^{[n]} 	& \overline{g}_1^{[n]} 	& 	\cdots 	& \overline{g}_n^{[n]}	
\end{array}
\right) \times A,
\end{eqnarray*}
where
    \begin{equation*}
    A=\left(
\begin{array}{cccc}
a_{00} 	& 	a_{10} 	& \cdots 	& 	a_{n0} 	\\[1ex]
a_{01}		& 	a_{11} 	& \cdots 	& 	a_{n1} 	\\[1ex]
\vdots  	& 	\vdots 	& 		& 	\vdots 	\\[1ex]
a_{0n} 	& 	a_{1n} 	& \cdots 	& 	a_{nn}	
\end{array}
\right) \,.
    \end{equation*}
Since the hyperplanes \eqref{Hj} are in general position, we may invert $A$ and obtain
$$
C(g_0, \ldots , g_n) = C\bigl(L_{H_0}, \ldots, L_{H_n}\bigr) \det(A^{-1})\,,
$$
where $T(r,\det(A^{-1}))=o(T_{g}(r))$. The cases where $z_0$ is a zero of $\det(A^{-1})$ can therefore be incorporated in the error term $o(T_{g}(r))$. Hence, assuming that  $\det(A^{-1})$ is non-zero at $z_0$, we have by \eqref{mj},
$$
C\bigl(g_0, \ldots, g_n\bigr)
=\prod_{j=0}^{n}(z-z_0)^{m_j}h(z)\,,
$$
where $h(z)$ is a holomorphic function defined on~$U$. Thus $C(g_0, \ldots, g_n)$ vanishes at~$z$ with order at least $\sum_{j=1}^{q}m_j$. This, by going through all points $z_0\in\C$, together with definitions of $N_g(r, L_{H_j})$, $N_C(r,0)$ and $\tilde{N}_g^{[n,c]}(r,L_{H_j})$ implies the assertion.
\end{proof}

The following difference analogue of truncated second main theorem is an application of Lemma~\ref{movinglemma} and Theorem~\ref{thm1.2}.
%$$
%T_g(r)=m_g(r,H)+N_g(r,H)+S(r)\,,
%$$
%to obtain  as follows.

\begin{theorem}[Difference analogue of Cartan's Second Main Theorem]\label{dCartantr}
Let $n\geq 1$, and let $g=[g_{0}: \ldots: g_{n}]$ be a holomorphic curve of $\C$ into $\P^n(\C)$ with $\varsigma := \varsigma(g)<1$, where $g_{0},\, \ldots, g_{n}$ are %be entire functions,
linearly independent over $\mathcal{P}_{c}^{1}$. Let
    $$
    \mathbf{a}_j(z) = (a_{j0},\ldots,a_{jn}), \qquad j\in\{0,\ldots,q\},
    $$
where $a_{jk}(z)$ are $c-$periodic entire functions satisfying $T(r,a_{jk})=o(T_{g}(r))$ for all $j,k\in\{0,\ldots,q\}$.
If the moving hyperplanes
    \begin{equation*}
    H_j(z) = \left\{[x_0:\cdots:x_n] : L_{H_j}(x,\mathbf{a}_j(z))=0\right\}, \qquad j\in\{0,\ldots,q\},
    \end{equation*}
are located in general position, then
\begin{equation*}
(q-n)T_g(r) \leq  \sum_{j=0}^q \tilde{N}_g^{[n,c]}(r, L_{H_j}) +o(T_{g}(r))
\end{equation*}
for all $r$ outside of a set $E$ with finite logarithmic measure.
\end{theorem}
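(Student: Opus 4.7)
The plan is to deduce Theorem~\ref{dCartantr} from Theorem~\ref{thm1.2} applied to the linear forms
\begin{equation*}
f_j := L_{H_j}(g, \mathbf{a}_j) = \sum_{i=0}^n a_{ji}(z)\, g_i(z), \qquad j = 0, \ldots, q,
\end{equation*}
and then to upgrade the resulting non-truncated estimate to the truncated one via Lemma~\ref{movinglemma}. The two preparatory results do essentially all the work; the only thing to check on our side is that the hypotheses of Theorem~\ref{thm1.2} are met.

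First I would verify linear independence. The coefficients $a_{ji}$ are $c$-periodic and entire with $T(r,a_{ji}) = o(T_g(r))$, and since $\varsigma(g) < 1$ this forces the hyper-order of each $a_{ji}$ to be strictly less than one, so $a_{ji} \in \mathcal{P}_c^1$. The ``general position'' hypothesis is naturally interpreted over $\mathcal{P}_c^1$, i.e.\ any $n+1$ of the vectors $\mathbf{a}_j$ are linearly independent over $\mathcal{P}_c^1$. Combined with the linear independence of $g_0, \ldots, g_n$ over $\mathcal{P}_c^1$, this yields the same property for the $f_j$: any relation $\sum_k c_k f_{j_k} \equiv 0$ with $c_k \in \mathcal{P}_c^1$ rewrites, after grouping, as $\sum_i\bigl(\sum_k c_k a_{j_k i}\bigr) g_i \equiv 0$, so each $\mathcal{P}_c^1$-coefficient $\sum_k c_k a_{j_k i}$ must vanish, and general position of the $\mathbf{a}_{j_k}$ then forces $c_k = 0$.

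Next, applying Theorem~\ref{thm1.2} gives
\begin{equation*}
(q-n)\, T_g(r) \leq N\!\left(r, \frac{1}{L}\right) - N(r,L) + o(T_g(r))
\end{equation*}
with $L = f_0 f_1 \cdots f_q / C(g_0, \ldots, g_n)$. Since $\prod_j f_j$ and $C(g_0,\ldots,g_n)$ are both entire, a pointwise comparison of orders yields the identity
\begin{equation*}
N\!\left(r, \frac{1}{L}\right) - N(r,L) = \sum_{j=0}^q N_g(r, L_{H_j}) - N_C(r, 0).
\end{equation*}
Lemma~\ref{movinglemma} then converts the right-hand side into truncated form,
\begin{equation*}
\sum_{j=0}^q N_g(r, L_{H_j}) - N_C(r, 0) \leq \sum_{j=0}^q \tilde{N}_g^{[n,c]}(r, L_{H_j}) + o(T_g(r)),
\end{equation*}
and chaining the inequalities produces the desired estimate outside the exceptional set of finite logarithmic measure supplied by Theorem~\ref{thm1.2}.

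The anticipated obstacle is essentially nil once the two cited results are available: the entire difficulty of the truncation is absorbed in Lemma~\ref{movinglemma}, whose key ingredient is that at an $n$-successive, $c$-separated zero of some $L_{H_j}$, the Casorati determinant $C(g_0,\ldots,g_n)$ acquires matching zeros via the factorization through the invertible coefficient matrix $A = (a_{ji})$. The only subtlety deserving care in the write-up is the linear-independence verification above, and the observation that the error $T(r,\det A^{-1}) = o(T_g(r))$ needed inside Lemma~\ref{movinglemma} is automatic from the growth bound on the $a_{ji}$.
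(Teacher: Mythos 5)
Your proposal is correct and coincides with the paper's intended argument: the paper gives no separate proof of Theorem~\ref{dCartantr}, stating only that it ``is an application of Lemma~\ref{movinglemma} and Theorem~\ref{thm1.2},'' which is precisely the chain you carry out (apply Theorem~\ref{thm1.2} to $f_j=L_{H_j}(g,\mathbf{a}_j)$, identify $N(r,1/L)-N(r,L)$ with $\sum_j N_g(r,L_{H_j})-N_C(r,0)$ since numerator and denominator of $L$ are entire, then truncate via Lemma~\ref{movinglemma}). Your verification that the $f_j$ satisfy the linear-independence hypothesis of Theorem~\ref{thm1.2} is a worthwhile detail the paper leaves implicit, and it is argued correctly.
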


From Theorem~\ref{dCartantr} we can obtain a difference analogue of the truncated deficiency relation for holomorphic curves.

\begin{corollary}
Under the assumptions of Theorem~\ref{dCartantr}, we have
\begin{eqnarray*}
% \nonumber to remove numbering (before each equation)
\sum_{j=0}^{q}\delta^{[n,c]}_{g}(0, L_{H_j})\leq n+1,
\end{eqnarray*}
where
\begin{eqnarray*}
% \nonumber to remove numbering (before each equation)
\delta^{[n,c]}_{g}(0, L_{H_j})=1-\limsup_{r\to \infty}\frac{\tilde{N}_g^{[n,c]}(r,  L_{H_j})}{T_{g}(r)}.
\end{eqnarray*}
\end{corollary}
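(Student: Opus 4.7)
The plan is to derive the deficiency relation as a direct consequence of Theorem~\ref{dCartantr} by dividing through by $T_g(r)$ and taking a limit superior, paralleling the classical derivation of Nevanlinna's deficiency relation from the second main theorem.

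First I would invoke Theorem~\ref{dCartantr} to obtain
\begin{equation*}
(q-n)\,T_g(r) \leq \sum_{j=0}^q \tilde{N}_g^{[n,c]}(r, L_{H_j}) + o(T_{g}(r))
\end{equation*}
for all $r$ outside an exceptional set $E$ of finite logarithmic measure. Since $g_0,\ldots,g_n$ are linearly independent over $\mathcal{P}_c^1$, the curve $g$ cannot be constant and $T_g(r) \to \infty$, so dividing both sides by $T_g(r)$ produces
\begin{equation*}
q - n \leq \sum_{j=0}^q \frac{\tilde{N}_g^{[n,c]}(r, L_{H_j})}{T_g(r)} + o(1), \qquad r \notin E.
\end{equation*}

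Next I would pass to the limit superior. Taking $\limsup$ as $r \to \infty$ with $r\notin E$ on the right-hand side, and applying the subadditivity of $\limsup$ for finite sums, I get
\begin{equation*}
q - n \leq \sum_{j=0}^q \limsup_{\substack{r\to\infty\\ r\notin E}}\frac{\tilde{N}_g^{[n,c]}(r, L_{H_j})}{T_g(r)} \leq \sum_{j=0}^q \limsup_{r\to\infty}\frac{\tilde{N}_g^{[n,c]}(r, L_{H_j})}{T_g(r)},
\end{equation*}
where the last step uses the trivial bound that the $\limsup$ restricted to the complement of a set of finite logarithmic measure does not exceed the unrestricted $\limsup$. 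Rearranging via the definition of the deficiency then yields
\begin{equation*}
\sum_{j=0}^{q}\delta^{[n,c]}_{g}(0, L_{H_j}) = (q+1) - \sum_{j=0}^q\limsup_{r\to\infty}\frac{\tilde{N}_g^{[n,c]}(r, L_{H_j})}{T_g(r)} \leq (q+1) - (q-n) = n+1,
\end{equation*}
as required.

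There is no genuine obstacle here; the argument is a direct book-keeping exercise. The only points requiring care are (i) making sure $T_g(r)\to\infty$ so that the $o(T_g(r))/T_g(r)\to 0$, which is guaranteed by the linear independence hypothesis, and (ii) handling the exceptional set $E$, which is standard because a set of finite logarithmic measure is thin enough that restricting the $\limsup$ to its complement only lowers the value. The inequality is sharp in the same sense as in the classical theory, since it is obtained just by rearranging the conclusion of Theorem~\ref{dCartantr}.
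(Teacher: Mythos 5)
Your proposal is correct and is exactly the standard derivation the paper intends: the corollary is stated without proof as an immediate consequence of Theorem~\ref{dCartantr}, obtained by dividing by $T_g(r)$, taking the limit superior, and rearranging via the definition of the deficiency. Your two points of care (that $T_g(r)\to\infty$ by linear independence over $\mathcal{P}_c^1\supset\C$, and that the restricted $\limsup$ over the complement of the exceptional set is bounded by the unrestricted one, which is the correct direction for the inequality) are precisely the only non-trivial book-keeping involved.
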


%
%\begin{proof}
%% $z\in \mathbb{C}\backslash \mathfrak{A}$,
%\textcolor[rgb]{1.00,0.00,0.50}{We %consider only  admi
%assume that $z_0\in \mathbb{C}\backslash \mathfrak{A}$ is  an $n$- successive $c$-separated zero
%of $f_{j}=\langle g(z), \mathbf{a}_j(z)\rangle$ for some $j\in Q=\{0,\ldots,q\}$, where
%$\mathfrak{A}$ is defined as in Theorem~\ref{lem2.1}.
%From Theorem~\ref{lem2.1}, there exists at most $n$ functions such that $f_{j}(z_0)$=0. We choose these functions
%$f_{b_{0}}, \ldots, f_{b_{n-1}}$ and any one of the remaining functions $f_{b_{n}}$. Then by using Lemma~\ref{movinglemma} and Theorem~\ref{thm1.2}, we get the conclusion.}
%\end{proof}

Instead of $n$-successive points, we can consider points with different
separation properties. For instance, we say that $a$ is a \textbf{\emph{derivative-like}} paired value of $f$
with the separation $c$ if the following property holds for all except at most finitely many $a$-points of $f$:
whenever $f(z)=a$ with the multiplicity $m$, then also $f(z+c)=a$ with the multiplicity $\max \{m-1, 0\}$.

Before introducing Theorem~\ref{thm2.1}, we give the following definition of the usual truncated counting function first, please refer, for instance,
to \cite{gundersenh:04} for details.

\begin{definition} \label{def1.1}
For a meromorphic function $f$ satisfying $f\not\equiv0$ and a
positive integer $j$, let $n_{j}(r,0,f)$ denote the number of zeros of
$f$ in $\{z: |z|\leq r\}$, counted in the following manner: a zero of
$f$ of multiplicity $m$ is counted exactly $k$ times where $k=\min\{m,j\}$. Then let $N_{j}(r,0,f)$ denote
the corresponding integrated counting function; that is,
\begin{eqnarray*}
% \nonumber to remove numbering (before each equation)
  N_{j}(r,0,f)=n_{j}(0,0,f)\log r+
  \int_{0}^{r}\frac{n_{j}(t,0,f)-n_{j}(0,0,f)}{t}dt.
\end{eqnarray*}
\end{definition}

With this definition we may state the second difference analogue of the truncated second main theorem.

\begin{theorem} \label{thm2.1}
Assume that the hypotheses of Theorem \ref{thm1.2} hold, and $0$ is a derivative-like
paired value of $f_{i}$ with the separation $c$ for all $i\in\{0,\ldots,q\}$. Then we have
\begin{equation*}
N(r,0,L)\leq \sum _{j=0}^{q}N_{n}(r,0,f_{j})+O(1),
\end{equation*}
and this gives
\begin{equation*}
(q-n)T_g(r)\leq \sum _{j=0}^{q}N_{n}(r,0,f_{j})-N(r,L)+o(T_{g}(r)),
\end{equation*}
where $r$ approaches infinity outside of an exceptional set of finite logarithmic measure.
\end{theorem}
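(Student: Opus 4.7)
The plan is to establish the first inequality $N(r,0,L)\le\sum_{j=0}^{q}N_n(r,0,f_j)+O(1)$ by a pointwise analysis of multiplicities at zeros of $\prod_j f_j$, and then substitute it into Theorem~\ref{thm1.2} to obtain the second inequality. The central observation is that every zero of an $f_j$ of multiplicity exceeding $n$ must be absorbed into the Casorati determinant $C(g_0,\ldots,g_n)$ sitting in the denominator of $L$, and the derivative-like paired hypothesis is exactly what is needed to quantify this absorption.

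Fix a point $z_0$, write $m_j = \mathrm{ord}_{z_0} f_j$ (with $m_j=0$ when $f_j(z_0)\neq 0$), and iterate the derivative-like paired property (outside of its finitely many exceptional points) to get $\mathrm{ord}_{z_0} f_j(z+kc) = \max\{m_j-k,0\}$ for every $k\ge 0$. By Theorem~\ref{lem2.1}, outside a set of finite logarithmic measure at most $n$ of the $f_j$'s vanish at $z_0$ simultaneously; relabel so that $f_{J_0},\ldots,f_{J_{s-1}}$ are the vanishing ones with $m_{J_0}\ge\cdots\ge m_{J_{s-1}}\ge 1$ and $s\le n$, and extend to an $(n{+}1)$-tuple $(J_0,\ldots,J_n)$ with the remaining $f_{J_l}$ non-vanishing at $z_0$. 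Since the coefficients $a_{ij}$ are $c$-periodic, they commute with the shift $z\mapsto z+kc$ on every row of the Casorati matrix, producing the key identity
\[
C(f_{J_0},\ldots,f_{J_n}) = \det(A_J)\cdot C(g_0,\ldots,g_n),\qquad A_J:=(a_{i,J_l})_{0\le i,l\le n},
\]
and $\det(A_J)\not\equiv 0$ by the general-position hypothesis of Theorem~\ref{thm1.2}.

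The main technical step is a combinatorial lower bound for the order of $C(f_{J_0},\ldots,f_{J_n})$ at $z_0$. The $(k,l)$-entry $f_{J_l}(z+kc)$ vanishes at $z_0$ with multiplicity $\max\{m_{J_l}-k,0\}$, so the permutation term $\prod_l f_{J_l}(z+\sigma(l)c)$ has order $\sum_l\max\{m_{J_l}-\sigma(l),0\}$, and since the order of a sum is bounded below by the minimum order of its summands, the determinant's order is at least $\min_\sigma \sum_l\max\{m_{J_l}-\sigma(l),0\}$. A pairwise swap argument (swapping positions $l_1<l_2$ with $\sigma(l_1)<\sigma(l_2)$ never increases the sum, because $m_{J_{l_1}}\ge m_{J_{l_2}}$) shows the minimum is achieved by pairing the largest multiplicities with the largest shifts, namely $\sigma(l)=n-l$. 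This yields
\[
\mathrm{ord}_{z_0} C(f_{J_0},\ldots,f_{J_n}) \;\ge\; \sum_{l=0}^{n}\max\{m_{J_l}-(n-l),0\} \;\ge\; \sum_{l=0}^{n}\max\{m_{J_l}-n,0\}.
\]
Invoking the Casorati identity, then integrating over all points $z_0$ and accounting for the finitely many exceptions of the derivative-like paired hypothesis, for the counting function $N^*(r)=o(T_g(r))$ of points with more than $n$ simultaneous zeros (Theorem~\ref{lem2.1}), and for the $c$-periodic entire factors $\det(A_J)$ satisfying $N(r,0,\det A_J)\le T(r,\det A_J)=o(T_g(r))$, we obtain
\[
N(r,0,C(g_0,\ldots,g_n)) \;\ge\; \sum_{j=0}^{q}\bigl(N(r,0,f_j)-N_n(r,0,f_j)\bigr) + O(1).
\]
Subtracting from $N(r,0,f_0\cdots f_q)$ gives the first asserted inequality, and substituting it into Theorem~\ref{thm1.2} (via $N(r,1/L)=N(r,0,L)$) yields the second.

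The main obstacle is the bookkeeping that turns the raw combinatorial pointwise estimate into the clean integrated inequality with the advertised error term. The combinatorics itself is elementary but requires the swap argument to be stated for the $\max\{\,\cdot\,,0\}$-function rather than a naive rearrangement inequality. More delicate is the handling of the points where $\det(A_J)$ vanishes (for all $\binom{q+1}{n+1}$ choices of $J$) and where more than $n$ of the $f_j$ vanish simultaneously: these contribute $o(T_g(r))$ in general, and one must choose, at each relevant $z_0$, an index set $J$ on which $\det(A_J)$ does not vanish in order to extract the sharp order estimate on $C(g_0,\ldots,g_n)$. This small-error bookkeeping ultimately gets absorbed into the $o(T_g(r))$ term in the passage to the second inequality.
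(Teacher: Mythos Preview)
Your approach is correct and essentially the same as the paper's. The paper packages the multiplicity estimate slightly differently, writing $L=f_{a_0}\cdots f_{a_{q-n-1}}/\bigl(A_{b_0\cdots b_n}H\bigr)$ with $H=\det\bigl(\overline{f}_{b_j}^{[k]}/f_{b_j}\bigr)_{0\le k,j\le n}$ and bounding the pole order of $H$ column-by-column by $\sum_j\min\{n,m_{b_j}\}$, which is equivalent to your lower bound on $\mathrm{ord}_{z_0}C(g_0,\ldots,g_n)$ but bypasses the swap/rearrangement argument; note also that your own bookkeeping (the $N^*(r)$ and $N(r,0,\det A_J)$ contributions are $o(T_g(r))$) is slightly inconsistent with the $O(1)$ you write in the displayed inequality, though this does not affect the second inequality.
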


For the proof of Theorem \ref{thm1.2} and Theorem \ref{thm2.1}, the following lemma is needed.

\begin{lemma}[\cite{halburdkt:14}] \label{lem2.6}
If the holomorphic curve $g=[g_{0}:\cdots:g_{n}]$
satisfies $\varsigma(g)<1$ and if $c\in \mathbb{C}$, then $C(g_{0},\ldots, g_{n})\equiv 0$ if and only if the entire
functions $g_{0}, \ldots, g_{n}$ are linearly dependent over the
field $\mathcal{P}_{c}^{1}$.
\end{lemma}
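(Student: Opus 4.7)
The plan is to handle the two implications separately. The forward direction (linear dependence implies $C\equiv 0$) is a direct shift computation; the converse proceeds by induction on $n$, and the main obstacle is verifying that the coefficients produced by Cramer's rule lie in $\mathcal{P}_{c}^{1}$, i.e.\ have hyper-order strictly less than $1$.

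For the easy direction I would start from a nontrivial dependence $\sum_{i=0}^{n}c_{i}g_{i}\equiv 0$ with $c_{i}\in\mathcal{P}_{c}^{1}$ not all zero, and exploit the $c$-periodicity of each $c_{i}$ by replacing $z$ with $z+kc$. This yields $\sum_{i=0}^{n}c_{i}(z)g_{i}(z+kc)\equiv 0$ for every $k\in\{0,\ldots,n\}$, which are $n+1$ identities asserting that the columns of the Casorati matrix obey a common nontrivial linear dependence with meromorphic coefficients $c_{i}(z)$; hence $C(g_{0},\ldots,g_{n})\equiv 0$.

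For the converse I would induct on $n$, with the base case $n=0$ immediate. In the inductive step, assuming $C(g_{0},\ldots,g_{n})\equiv 0$: if the Casoratian of some $n$-element subset of $\{g_{0},\ldots,g_{n}\}$ vanishes identically, apply the induction hypothesis to that subset; otherwise, after relabeling I may assume $C(g_{1},\ldots,g_{n})\not\equiv 0$. Cramer's rule applied to the $n\times n$ system
\[
\sum_{i=1}^{n}\alpha_{i}(z)\,g_{i}(z+kc)=-g_{0}(z+kc),\qquad k=0,\ldots,n-1,
\]
produces unique meromorphic solutions $\alpha_{1},\ldots,\alpha_{n}$, and $C(g_{0},\ldots,g_{n})\equiv 0$ forces the equation at $k=n$ to hold automatically, giving
\[
g_{0}(z+kc)+\sum_{i=1}^{n}\alpha_{i}(z)\,g_{i}(z+kc)=0,\qquad k=0,1,\ldots,n. \qquad (\star)
\]

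To prove each $\alpha_{i}$ is $c$-periodic I would replace $z$ by $z+c$ in $(\star)$ for $k=0,\ldots,n-1$ and subtract from $(\star)$ for $k=1,\ldots,n$; the result is an $n\times n$ linear system in the differences $\alpha_{i}(z+c)-\alpha_{i}(z)$ whose coefficient matrix, after a change of variable, has nonvanishing determinant $C(g_{1},\ldots,g_{n})(z+c)$, forcing $\alpha_{i}(z+c)\equiv\alpha_{i}(z)$. The harder part will be verifying $\varsigma(\alpha_{i})<1$. Writing $\alpha_{i}$ via Cramer's rule as a ratio of determinants in the shifts $g_{j}(z+kc)$, I would invoke the Halburd--Korhonen shift estimate $T(r,g_{j}(z+kc))=T(r,g_{j})+o(T_{g}(r))$ outside a set of finite logarithmic measure, valid precisely because $\varsigma(g)<1$, together with sub-additivity of the Nevanlinna characteristic under sums and products. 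This yields $T(r,\alpha_{i})=O(T_{g}(r))$ and hence $\varsigma(\alpha_{i})\le\varsigma(g)<1$, so $\alpha_{i}\in\mathcal{P}_{c}^{1}$. Evaluating $(\star)$ at $k=0$ then delivers the required nontrivial $\mathcal{P}_{c}^{1}$-dependence $g_{0}+\sum_{i=1}^{n}\alpha_{i}g_{i}\equiv 0$, completing the induction.
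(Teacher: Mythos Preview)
The paper does not supply its own proof of this lemma; it is quoted from \cite{halburdkt:14}. Your inductive scheme via Cramer's rule, together with the shift-and-subtract verification of periodicity, is the standard route and is essentially what appears in that reference.

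There is, however, one step in your plan that does not go through as written. You assert the shift estimate $T(r,g_{j}(z+kc))=T(r,g_{j})+o(T_{g}(r))$ is ``valid precisely because $\varsigma(g)<1$''. But Lemma~\ref{lem2.2} applies to $T(\cdot,g_{j})$ only when the hyper-order of $g_{j}$ itself is $<1$, and that does \emph{not} follow from $\varsigma(g)<1$: take $g_{0}=e^{e^{z}}$ and $g_{1}=(1+z)e^{e^{z}}$, so that $g_{0}/g_{1}=1/(1+z)$ gives $T_{g}(r)\sim\log r$ and $\varsigma(g)=0$, yet $g_{0}$ has hyper-order~$1$. The coordinates of a reduced representation can grow much faster than the Cartan characteristic of the curve, so ``sub-additivity'' applied to the $g_{j}$ directly does not bound $T(r,\alpha_{i})$ by $O(T_{g}(r))$.

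The repair is to pass to ratios. Since $\alpha_{i}=D_{i}/D$ with $D=C(g_{1},\ldots,g_{n})$ and $D_{i}$ the corresponding Cramer numerator, both are homogeneous of degree~$n$ in the shifted $g_{j}$'s; dividing row~$k$ of each determinant by $g_{\ell}(z+kc)$ for a fixed index~$\ell$ expresses $\alpha_{i}$ as a rational function of the shifts of $g_{j}/g_{\ell}$. By Lemma~\ref{lem2.5} one has $T(r,g_{j}/g_{\ell})\le(1+o(1))T_{g}(r)$, hence each ratio has hyper-order $\le\varsigma(g)<1$, and now Lemma~\ref{lem2.2} legitimately controls the shifts; sub-additivity then yields $T(r,\alpha_{i})=O(T_{g}(r))$. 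The same remark applies to your use of the induction hypothesis on a proper subset: formulate the inductive statement for tuples of entire functions whose pairwise ratios have hyper-order $<1$, a condition that passes to subsets.
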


\begin{proof}[Proof of Theorem~\ref{thm2.1}]
Suppose that $a_{0}, \ldots, a_{q-n-1}$ are any $q-n$ distinct integers
in the set $\{0, 1, \ldots, q\}$, and let $b_{0}, b_{1}, \ldots, b_{n}$
denote the remaining integers in the set $\{0,1, \ldots, q\}$.

From the assumptions of Theorem~\ref{thm2.1} (the same as the assumptions of Theorem~\ref{thm1.2}), we have that
\begin{eqnarray}\label{sec3aaa}
\left( \begin{array}{ccc}
f_{b_{0}} & \cdots & f_{b_{n}} \\
\overline{f}_{b_{0}} & \cdots  & \overline{f}_{b_{n}} \\
\vdots & \ddots & \vdots \\
\overline{f}_{b_{0}}^{[n]}& \cdots & \overline{f}_{b_{n}}^{[n]}
\end{array} \right)
=
\left( \begin{array}{ccc}
g_{0} & \cdots & g_{n} \\
\overline{g}_{0} & \cdots  & \overline{g}_{n} \\
\vdots & \ddots & \vdots \\
\overline{g}_{0}^{[n]}& \cdots & \overline{g}_{n}^{[n]}
\end{array} \right)
\cdot
\left( \begin{array}{ccc}
a_{0b_{0}} & \cdots & a_{0b_{n}} \\
a_{1b_{0}} & \cdots  & a_{1b_{n}} \\
\vdots & \ddots & \vdots \\
a_{nb_{0}}& \cdots & a_{nb_{n}}
\end{array} \right),
\end{eqnarray}
%where $\pi_{jm}$ are small periodic functions depending only on the small %periodic functions $a_{ij}$ for all $j=0,\, \ldots,\, n$ and
%$m=0,\, \ldots,\, n$.
where

\begin{eqnarray}\label{sec3aabbbb}
\left| \begin{array}{ccc}
a_{0b_{0}} & \cdots & a_{0b_{n}} \\
a_{1b_{0}} & \cdots  & a_{1b_{n}} \\
\vdots & \ddots & \vdots \\
a_{nb_{0}}& \cdots & a_{nb_{n}}
\end{array} \right|=
\left| \begin{array}{ccc}
a_{0b_{0}} & \cdots &  a_{nb_{0}}\\
a_{0b_{1}} & \cdots  & a_{nb_{1}} \\
\vdots & \ddots & \vdots \\
a_{0b_{n}}& \cdots & a_{nb_{n}}
\end{array} \right|=d_{b_{0}\cdots b_{n}}(z).
\end{eqnarray}
Lemma \ref{lem2.6} yields $C(g_{0},\, \ldots,\, g_{n})\not\equiv 0$. Since $f_{b_{0}},\, \ldots,\, f_{b_{n}}$ are linearly independent
over $\mathcal{P}_{c}^{1}$,  it follows that the determinant \eqref{sec3aabbbb} of the coefficient matrix of \eqref{sec3aaa} satisfies
\begin{eqnarray*}\label{sec3aab}
d_{b_{0}\cdots b_{n}}(z)\not\equiv 0,
\end{eqnarray*}
and so also $C(f_{b_{0}},\, \ldots,\, f_{b_{n}})\not\equiv 0$ by \eqref{sec3aaa}. Since
\begin{eqnarray}\label{sec3aac}
% \nonumber to remove numbering (before each equation)
\left(
\begin{array}{c}
f_{b_{0}}(z)\\
f_{b_{1}}(z)\\
\vdots \\
f_{b_{n}}(z)\\
\end{array}
\right)
=
\left( \begin{array}{ccc}
a_{0b_{0}} & \cdots & a_{nb_{0}} \\
a_{0b_{1}} & \cdots  & a_{nb_{1}} \\
\vdots & \ddots & \vdots \\
a_{0b_{n}} & \cdots & a_{nb_{n}}
\end{array} \right)
\cdot
\left(
\begin{array}{c}
g_{0}(z)\\
g_{1}(z)\\
\vdots \\
g_{n}(z)\\
\end{array}
\right),
\end{eqnarray}
by using Cramer's rule, we get that
\begin{eqnarray*}\label{sec3eaee}
g_{i} &=&\frac{\left|
\begin{array}{ccccccc}
a_{0b_{0}} & \cdots  &  a_{i-1 b_{0}}  & f_{b_{0}} &  a_{i+1 b_{0}}  &  \cdots  &  a_{nb_{0}} \\
a_{0b_{1}} & \cdots  &  a_{i-1 b_{1}} & f_{b_{1}} &  a_{i+1 b_{1}} &\cdots  & a_{nb_{1}} \\
\vdots &  \cdots &  \vdots  & \vdots &\vdots &  \cdots  &\vdots \\
a_{0b_{n}} & \cdots  &  a_{i-1 b_{n}} &  f_{b_{n}}& a_{i+1 b_{n}}&\cdots  & a_{nb_{n}}
\end{array} \right|}{d_{b_{0}\cdots b_{n}}(z)}\\
&=&
(-1)^{1+i+1}\frac{\left|
\begin{array}{cccccc}
a_{0b_{1}} & \cdots  &  a_{i-1 b_{1}}  &  a_{i+1 b_{1}} &\cdots  & a_{nb_{1}} \\
\vdots &  \cdots &  \vdots  &\vdots &  \cdots  &\vdots \\
a_{0b_{n}} & \cdots  &  a_{i-1 b_{n}} & a_{i+1 b_{n}}&\cdots  & a_{nb_{n}}
\end{array} \right|}{d_{b_{0}\cdots b_{n}}(z)}\cdot f_{b_{0}}\\
&+&(-1)^{2+i+1}\frac{
\left|
\begin{array}{cccccc}
a_{0b_{0}} & \cdots  &  a_{i-1 b_{0}}  &  a_{i+1 b_{0}}  &  \cdots  &  a_{nb_{0}} \\
a_{0b_{2}} & \cdots  &  a_{i-1 b_{2}}  &  a_{i+1 b_{2}}  &  \cdots  &  a_{nb_{2}}\\
\vdots &  \cdots &  \vdots  & \vdots  &  \cdots  &\vdots \\
a_{0b_{n}} & \cdots  &  a_{i-1 b_{n}} & a_{i+1 b_{n}}&\cdots  & a_{nb_{n}}
\end{array} \right|}{d_{b_{0}\cdots b_{n}}(z)}
\cdot f_{b_{1}} \\
&+&\cdots \\
&+&
(-1)^{n+1+i+1}
\frac{\left|
\begin{array}{cccccc}
a_{0b_{0}} & \cdots  &  a_{i-1 b_{0}}  &  a_{i+1 b_{0}}  &  \cdots  &  a_{nb_{0}} \\
a_{0b_{1}} & \cdots  &  a_{i-1 b_{1}}  &  a_{i+1 b_{1}}  &  \cdots  &  a_{nb_{1}}\\
\vdots &  \cdots &  \vdots  & \vdots  &  \cdots  &\vdots \\
a_{0b_{n-1}} & \cdots  &  a_{i-1 b_{n-1}} & a_{i+1 b_{n-1}}&\cdots  & a_{nb_{n-1}}
\end{array} \right|}{d_{b_{0}\cdots b_{n}}(z)}
\cdot f_{b_{n}}.
\end{eqnarray*}
We set
\begin{eqnarray}\label{sec3aad}
c^{*}_{b_{0}i}(z)&:=&
(-1)^{1+i+1}\frac{\left|
\begin{array}{cccccc}
a_{0b_{1}} & \cdots  &  a_{i-1 b_{1}}  &  a_{i+1 b_{1}} &\cdots  & a_{nb_{1}} \\
\vdots &  \cdots &  \vdots  &\vdots &  \cdots  &\vdots \\
a_{0b_{n}} & \cdots  &  a_{i-1 b_{n}} & a_{i+1 b_{n}}&\cdots  & a_{nb_{n}}
\end{array} \right|}{d_{b_{0}\cdots b_{n}}(z)}      \nonumber \\
c^{*}_{b_{1}i}(z) &:=&
(-1)^{2+i+1}\frac{
\left|
\begin{array}{cccccc}
a_{0b_{0}} & \cdots  &  a_{i-1 b_{0}}  &  a_{i+1 b_{0}}  &  \cdots  &  a_{nb_{0}} \\
a_{0b_{2}} & \cdots  &  a_{i-1 b_{2}}  &  a_{i+1 b_{2}}  &  \cdots  &  a_{nb_{2}}\\
\vdots &  \cdots &  \vdots  & \vdots  &  \cdots  &\vdots \\
a_{0b_{n}} & \cdots  &  a_{i-1 b_{n}} & a_{i+1 b_{n}}&\cdots  & a_{nb_{n}}
\end{array} \right|}{d_{b_{0}\cdots b_{n}}(z)}   \\
&\cdots&    \nonumber \\
c^{*}_{b_{n}i}(z) &:=&
(-1)^{n+1+i+1}
\frac{\left|
\begin{array}{cccccc}
a_{0b_{0}} & \cdots  &  a_{i-1 b_{0}}  &  a_{i+1 b_{0}}  &  \cdots  &  a_{nb_{0}} \\
a_{0b_{1}} & \cdots  &  a_{i-1 b_{1}}  &  a_{i+1 b_{1}}  &  \cdots  &  a_{nb_{1}}\\
\vdots &  \cdots &  \vdots  & \vdots  &  \cdots  &\vdots \\
a_{0b_{n-1}} & \cdots  &  a_{i-1 b_{n-1}} & a_{i+1 b_{n-1}}&\cdots  & a_{nb_{n-1}}
\end{array} \right|}{d_{b_{0}\cdots b_{n}}(z)} \nonumber
\end{eqnarray}
to obtain
\begin{eqnarray}\label{sec3aae}
% \nonumber to remove numbering (before each equation)
\left(
\begin{array}{c}
g_{0}(z)\\
g_{1}(z)\\
\vdots \\
g_{n}(z)\\
\end{array}
\right)
&=&
\left( \begin{array}{cccc}
c^{*}_{b_{0}0}(z) & c^{*}_{b_{1}0}(z)&  \cdots & c^{*}_{b_{n}0}(z) \\
c^{*}_{b_{0}1}(z) & c^{*}_{b_{1}1}(z)& \cdots  & c^{*}_{b_{n}1}(z) \\
\vdots & \vdots & \ddots & \vdots \\
c^{*}_{b_{0}n}(z) &c^{*}_{b_{1}n}(z)& \cdots & c^{*}_{b_{n}n}(z)
\end{array} \right)
\cdot
\left(
\begin{array}{c}
f_{b_{0}}(z)\\
f_{b_{1}}(z)\\
\vdots \\
f_{b_{n}}(z)\\
\end{array}
\right),
\end{eqnarray}
where
$c^{*}_{b_{k}j}(k=0, \ldots,n; j= 0,\ldots, n)$ are $c$-periodic  meromorphic functions. Thus for all $i=1, \ldots,n$,
\begin{eqnarray}\label{sec3aag}
\left(
\begin{array}{c}
\overline{g}^{[i]}_{0}(z)\\
\overline{g}^{[i]}_{1}(z)\\
\vdots \\
\overline{g}^{[i]}_{n}(z)\\
\end{array}
\right)
&=&
\left( \begin{array}{cccc}
\overline{c^{*}}^{[i]}_{b_{0}0}(z) & \overline{c^{*}}^{[i]}_{b_{1}0}(z)&  \cdots & \overline{c^{*}}^{[i]}_{b_{n}0}(z) \\
\overline{c^{*}}^{[i]}_{b_{0}1}(z) & \overline{c^{*}}^{[i]}_{b_{1}1}(z)& \cdots  & \overline{c^{*}}^{[i]}_{b_{n}1}(z) \\
\vdots & \vdots & \ddots & \vdots \\
\overline{c^{*}}^{[i]}_{b_{0}n}(z) &\overline{c^{*}}^{[i]}_{b_{1}n}(z)& \cdots & \overline{c^{*}}^{[i]}_{b_{n}n}(z)
\end{array} \right)
\cdot
\left(
\begin{array}{c}
\overline{f}^{[i]}_{b_{0}}(z)\\
\overline{f}^{[i]}_{b_{1}}(z)\\
\vdots \\
\overline{f}^{[i]}_{b_{n}}(z)\\
\end{array}
\right)  \nonumber \\
&=&
\left( \begin{array}{cccc}
c^{*}_{b_{0}0}(z) & c^{*}_{b_{1}0}(z)&  \cdots & c^{*}_{b_{n}0}(z) \\
c^{*}_{b_{0}1}(z) & c^{*}_{b_{1}1}(z)& \cdots  & c^{*}_{b_{n}1}(z) \\
\vdots & \vdots & \ddots & \vdots \\
c^{*}_{b_{0}n}(z) &c^{*}_{b_{1}n}(z)& \cdots & c^{*}_{b_{n}n}(z)
\end{array} \right)
\cdot
\left(
\begin{array}{c}
\overline{f}^{[i]}_{b_{0}}(z)\\
\overline{f}^{[i]}_{b_{1}}(z)\\
\vdots \\
\overline{f}^{[i]}_{b_{n}}(z)\\
\end{array}
\right),
\end{eqnarray}
and so
\begin{eqnarray}\label{sec3aahhh}
\left( \begin{array}{ccc}
g_{0} & \cdots & g_{n} \\
\overline{g}_{0} & \cdots  & \overline{g}_{n} \\
\vdots & \ddots & \vdots \\
\overline{g}_{0}^{[n]}& \cdots & \overline{g}_{n}^{[n]}
\end{array} \right)
&=&
\left( \begin{array}{ccc}
f_{b_{0}} & \cdots & f_{b_{n}} \\
\overline{f}_{b_{0}} & \cdots  & \overline{f}_{b_{n}} \\
\vdots & \ddots & \vdots \\
\overline{f}_{b_{0}}^{[n]}& \cdots & \overline{f}_{b_{n}}^{[n]}
\end{array} \right)  \nonumber \\
& \cdot &
\left( \begin{array}{cccc}
c^{*}_{b_{0}0}(z) &  c^{*}_{b_{0}1}(z) &  \cdots & c^{*}_{b_{0}n}(z) \\
c^{*}_{b_{1}0}(z) & c^{*}_{b_{1}1}(z)& \cdots  & c^{*}_{b_{1}n}(z) \\
\vdots & \vdots & \ddots & \vdots \\
c^{*}_{b_{n}0}(z) &  c^{*}_{b_{n}1} (z)& \cdots & c^{*}_{b_{n}n}(z)
\end{array} \right),
\end{eqnarray}
implying
\begin{eqnarray}\label{sec3aah}
C(g_{0}, \cdots, g_{n})=
C(f_{b_{0}}, \ldots, f_{b_{n}})\cdot \left| \begin{array}{cccc}
c^{*}_{b_{0}0}(z) &  c^{*}_{b_{0}1}(z) &  \cdots & c^{*}_{b_{0}n}(z) \\
c^{*}_{b_{1}0}(z) & c^{*}_{b_{1}1}(z)& \cdots  & c^{*}_{b_{1}n}(z) \\
\vdots & \vdots & \ddots & \vdots \\
c^{*}_{b_{n}0}(z) &  c^{*}_{b_{n}1} (z)& \cdots & c^{*}_{b_{n}n}(z)
\end{array} \right|.
\end{eqnarray}
For simplicity,
we set
\begin{eqnarray}\label{sec3aai}
A_{b_{0}b_{1}\cdots b_{n}}(z)=\left| \begin{array}{cccc}
c^{*}_{b_{0}0}(z) &  c^{*}_{b_{0}1}(z) &  \cdots & c^{*}_{b_{0}n}(z) \\
c^{*}_{b_{1}0}(z) & c^{*}_{b_{1}1}(z)& \cdots  & c^{*}_{b_{1}n}(z) \\
\vdots & \vdots & \ddots & \vdots \\
c^{*}_{b_{n}0}(z) &  c^{*}_{b_{n}1} (z)& \cdots & c^{*}_{b_{n}n}(z)
\end{array} \right|,
\end{eqnarray}
and then we have $A_{b_{0}b_{1}\cdots b_{n}}(z)\not\equiv 0$ and $T(r,A_{b_{0}b_{1}\cdots b_{n}}(z))=o(T_{g}(r))$ from \eqref{sec3aabbbb}, \eqref{sec3aad} and \eqref{sec3aai}.

From (\ref{hab}), (\ref{sec3aah}) and (\ref{sec3aai}), we have
\begin{eqnarray}\label{habbbb}
 % \nonumber to remove numbering (before each equation)
  L&=&\frac{f_{0}f_{1}\cdots f_{q}}{A_{b_{0}b_{1}\cdots b_{n}}(z)C(f_{b_{0}},\, f_{b_{1}},\, \ldots,\, f_{b_{n}})}  = \frac{f_{a_{0}}f_{a_{1}}\cdots f_{a_{q-n-1}}}{A_{b_{0}b_{1}\cdots b_{n}}(z)H},
 \end{eqnarray}
where
\begin{eqnarray}\label{eahhhh}
H=\left| \begin{array}{ccc}
1 & \cdots & 1 \\
\overline{f}_{b_{0}}/f_{b_{0}} & \cdots  & \overline{f}_{b_{n}}/f_{b_{n}} \\
\vdots & \ddots & \vdots \\
\overline{f}_{b_{0}}^{[n]}/f_{b_{0}}& \cdots & \overline{f}_{b_{n}}^{[n]}/f_{b_{n}}
\end{array} \right|
\end{eqnarray}
and $A_{b_{0}b_{1}\cdots b_{n}}(z)$ is a meromorphic function satisfying $T(r,A_{b_{0}b_{1}\cdots b_{n}}(z))=o(T_{g}(r))$.

We set $\mathfrak{A}$ as in \eqref{A} in the proof of Theorem~\ref{lem2.1}. Let $z_{0}\in \mathbb{C}\backslash \mathfrak{A}$ be a zero of $L$ of multiplicity $\mu$. Then from
(\ref{hab}), at least one of the functions $f_{0}, f_{1}, \ldots,f_{q}$ has a zero at $z_{0}$. For each $z_{0}\in \mathbb{C}\backslash \mathfrak{A}$, from Theorem~\ref{lem2.1} we can choose
the integers $a_{0},a_{1},\ldots, a_{q-n-1}$ %in (\ref{eaee})
to be a
particular set of integers $0, 1, \ldots, q$
satisfying
\begin{eqnarray}\label{eawwwww}
f_{a_{0}}(z_{0})f_{a_{1}}(z_{0})\cdots f_{a_{q-n-1}}(z_{0})\neq 0,
\end{eqnarray}
and we use these integers in \eqref{habbbb}.
From  \eqref{sec3aai}, \eqref{sec3aad} and \eqref{sec3aabbbb}, we have
$A_{b_{0}\cdots b_{n}}(z_{0})\neq \infty$.
Thus combining \eqref{habbbb}, \eqref{eahhhh} with the above analysis, we get
that $z_{0}$ is a pole
of $H$ with the multiplicity at least $\mu$.
From inspection of the form of $H$ in (\ref{eahhhh}),
we deduce that the poles of $H$ arise from the zeros of $f_{b_{i}} (i=0, \ldots, n)$.
For each $j=0, \ldots, n$,
$z_{0}$ is a zero of $f_{b_{j}}$ of multiplicity $m_{b_{j}}$,
where $m_{b_{j}}\geq 0$.
Since $0$ is the  derivative-like paired value of
$f_{i}$ with the separation $c$, we have that
whenever $f_{b_{j}}(z)=0$ with the multiplicity $m_{b_{j}}$,
we get
that $\frac{f_{b_{j}}(z+ic)}{f_{b_{j}}(z)}=\infty$ with the multiplicity
$m_{b_{j}}-\max \{m_{b_{j}}-i, 0\}=\min \{i, m_{b_{j}}\}$ except for at most finitely many zeros of $f_{b_{j}}$. Hence
%
%From above analysis and inspection of the form of $H$ in \eqref{eahhhh}, we deduce that
\begin{eqnarray*}
% \nonumber to remove numbering (before each equation)
  \mu \leq \sum_{j=0} ^{n} \min \{m_{b_{j}},n\}
\end{eqnarray*}
except at most finitely many zeros of $f_{b_0}, \ldots, f_{b_n}$. Thus the conclusions hold.
\end{proof}

Theorem~\ref{thm2.1} immediately implies the following deficiency relation for derivative-like paired values of holomorphic curves.

\begin{corollary}\label{coraaa}
Under the assumption of Theorem~\ref{thm2.1}, we have
\begin{eqnarray*}
% \nonumber to remove numbering (before each equation)
\sum_{j=0}^{q}\delta^{[n]}_{g}(0,f_{j})\leq n+1,
\end{eqnarray*}
where
\begin{eqnarray*}
% \nonumber to remove numbering (before each equation)
\delta^{[n]}_{g}(0,f_{j})=1-\limsup_{r\to \infty}\frac{N_{n}\left(r,\frac{1}{f_{j}}\right)}{T_{g}(r)}.
\end{eqnarray*}
\end{corollary}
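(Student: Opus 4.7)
The plan is to derive the corollary directly from Theorem~\ref{thm2.1} by a standard division-and-$\limsup$ argument, handling the exceptional set by the usual Nevanlinna-theoretic convention.

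First I would invoke Theorem~\ref{thm2.1} in the form
\begin{equation*}
(q-n)T_{g}(r) \leq \sum_{j=0}^{q} N_{n}\!\left(r,\tfrac{1}{f_{j}}\right) - N(r,L) + o(T_{g}(r))
\end{equation*}
valid for $r\to\infty$ outside an exceptional set $E$ of finite logarithmic measure. Since $N(r,L)\geq 0$ is a counting function of the poles of $L$, dropping $-N(r,L)$ from the right-hand side yields the simpler estimate
\begin{equation*}
(q-n)T_{g}(r) \leq \sum_{j=0}^{q} N_{n}\!\left(r,\tfrac{1}{f_{j}}\right) + o(T_{g}(r)), \qquad r\to\infty,\ r\notin E.
\end{equation*}

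Next I would divide by $T_{g}(r)$, which is permissible because the hypotheses $\varsigma(g)<1$ together with linear independence of $g_{0},\ldots,g_{n}$ over $\mathcal{P}_{c}^{1}$ force $T_{g}(r)\to\infty$, and then pass to the $\limsup$ as $r\to\infty$ with $r\notin E$. Using the subadditivity of $\limsup$ together with the fact that $o(T_{g}(r))/T_{g}(r)\to 0$, we obtain
\begin{equation*}
q-n \leq \sum_{j=0}^{q}\limsup_{\substack{r\to\infty\\ r\notin E}}\frac{N_{n}\!\left(r,\tfrac{1}{f_{j}}\right)}{T_{g}(r)}.
\end{equation*}
Because $E$ has finite logarithmic measure, the $\limsup$ over $r\notin E$ coincides with (or at worst is bounded above by) the quantity appearing in the definition of $\delta_{g}^{[n]}(0,f_{j})$, so the inequality above is compatible with the definition in the statement.

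Finally I would rewrite the result in terms of deficiencies. Since
\begin{equation*}
\sum_{j=0}^{q}\delta_{g}^{[n]}(0,f_{j}) = (q+1) - \sum_{j=0}^{q}\limsup_{r\to\infty}\frac{N_{n}\!\left(r,\tfrac{1}{f_{j}}\right)}{T_{g}(r)},
\end{equation*}
combining this identity with the previous inequality gives
\begin{equation*}
\sum_{j=0}^{q}\delta_{g}^{[n]}(0,f_{j}) \leq (q+1)-(q-n) = n+1,
\end{equation*}
as desired. The only mildly technical point is the handling of the exceptional set $E$, but this is routine in Nevanlinna theory: any adjustment of the $\limsup$ to exclude $E$ can only decrease it, so the bound on the sum of the deficiencies is unaffected; there is no genuine obstacle in the argument beyond a careful invocation of Theorem~\ref{thm2.1}.
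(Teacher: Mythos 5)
Your proof is correct and is precisely the standard argument the paper has in mind: the authors state that Corollary~\ref{coraaa} follows immediately from Theorem~\ref{thm2.1} and give no further details, and your steps (dropping $-N(r,L)\geq 0$... actually $-N(r,L)\leq 0$, dividing by $T_g(r)\to\infty$, taking $\limsup$ outside the exceptional set, and converting to deficiencies) fill in exactly that routine. The one point worth keeping as stated is your observation that restricting the $\limsup$ to $r\notin E$ can only decrease it, which is the correct direction for the inequality.
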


Theorem \ref{thm1.2} can also be used to obtain a sufficient condition, in terms of value distribution, for the growth of a holomorphic curve to be relatively fast. For this, we first need the following definition from \cite{halburdk:06AASFM}.

\begin{definition}[\cite{halburdk:06AASFM}]
We say that $a$ is an exceptional paired value of $f$ with the separation $c$ if
the following property holds for all except at most finitely many $a$-points of $f$:
Whenever $f(z)= a$ then also $f(z+c)=a$ with the same or higher multiplicity.
\end{definition}

\begin{corollary} \label{thm1.4} Let $n\geq 1$, and let $g=[g_{0}: \ldots: g_{n}]$ be a holomorphic curve of $\C$ into $\P^n(\C)$, where $g_{0},\, \ldots, g_{n}$ are linearly independent over $\mathcal{P}_{c}^{1}$.
If
    \begin{eqnarray*}
    f_{j}=\sum_{i=0}^{n}a_{ij}g_{i} \quad  j=0,\, \ldots, \, q, \, q>n,
    \end{eqnarray*}
where $a_{ij}$ are $c$-periodic entire functions satisfying $T(r,a_{ij})=o(T_{g}(r))$, such that any $n+1$ of the $q+1$ functions
 $f_{0}, \, \ldots,\, f_{q}$ are linearly independent over
  $\mathcal{P}_{c}^{1}$  and $0$ is
an exceptional paired  value of $f_{i}$ for all $i\in\{0,\ldots,q\}$, then we have $\varsigma(g) \geq 1$.
\end{corollary}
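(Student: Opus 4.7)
The plan is a proof by contradiction. Assuming $\varsigma(g)<1$, all hypotheses of Theorem~\ref{thm1.2} are in force, so it suffices to combine Theorem~\ref{dCartantr} (the truncated second main theorem for moving periodic hyperplanes) with the exceptional paired value property in order to conclude that the dominant counting terms are negligible, forcing $(q-n)T_g(r)=o(T_g(r))$, which contradicts $q>n$.

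To set up Theorem~\ref{dCartantr}, I would first identify $f_j=\sum_i a_{ij}g_i$ with $L_{H_j}(g,\mathbf{a}_j)$, where $\mathbf{a}_j=(a_{0j},\ldots,a_{nj})$ defines a $c$-periodic moving hyperplane $H_j$. Since $g_0,\ldots,g_n$ are linearly independent over $\mathcal{P}_c^1$, any relation $\sum_j \lambda_j \mathbf{a}_j\equiv 0$ in $\mathcal{P}_c^1$ transports, by bilinearity, to $\sum_j \lambda_j f_j\equiv 0$ and vice versa. Hence the assumption that any $n+1$ of the $f_j$'s are linearly independent over $\mathcal{P}_c^1$ is exactly the general position of the hyperplanes $H_0,\ldots,H_q$, so Theorem~\ref{dCartantr} applies.

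The heart of the argument is to estimate the truncated counting functions. Since $0$ is an exceptional paired value of $f_j$ with separation $c$, outside a finite exceptional set $E_j$ every zero $z_0$ of $f_j$ of multiplicity $m$ satisfies $f_j(z_0+c)=0$ with multiplicity at least $m$. Iterating this property requires that $z_0+\nu c\notin E_j$ for $\nu=0,\ldots,n-1$; the zeros failing this lie in $\bigcup_{\nu=0}^{n-1}(E_j-\nu c)$, still a finite set. Consequently, all but finitely many zeros of $f_j$ are $n$-successive and $c$-separated, so
\begin{equation*}
\tilde{N}_g^{[n,c]}(r,L_{H_j})=O(\log r)=o(T_g(r)),
\end{equation*}
where the last equality uses that, since $n+1\geq 2$ and the $g_i$ are linearly independent over $\mathbb{C}\subset\mathcal{P}_c^1$, at least one $g_i$ is non-constant, forcing $T_g(r)$ to grow at least as fast as $\log r$.

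Plugging these into Theorem~\ref{dCartantr} yields
\begin{equation*}
(q-n)T_g(r)\leq \sum_{j=0}^{q}\tilde{N}_g^{[n,c]}(r,L_{H_j})+o(T_g(r))=o(T_g(r))
\end{equation*}
as $r\to\infty$ outside a set of finite logarithmic measure. Dividing by $T_g(r)$ and letting $r\to\infty$ along this set gives $q-n\leq 0$, contradicting $q>n$. The only delicate point is the iteration of the exceptional paired value property across $n$ shifts; the potential enlargement of the exceptional set at each step is absorbed by the shift-union $\bigcup_{\nu=0}^{n-1}(E_j-\nu c)$ remaining finite.
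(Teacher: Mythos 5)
Your proof is correct, but it takes a genuinely different route from the paper's. The paper argues directly from Theorem~\ref{thm1.2}: writing $L$ in the form \eqref{habbbb} with $H$ the determinant \eqref{eahhhh} built from the ratios $\overline{f}_{b_j}^{[k]}/f_{b_j}$, the exceptional paired value hypothesis forces each such ratio (hence $H$) to be pole-free, so $L$ has no zeros outside the small set $\mathfrak{A}$, $N\left(r,\frac{1}{L}\right)$ is negligible, and \eqref{hac} yields the contradiction. You instead route through Theorem~\ref{dCartantr}: you first check that the linear independence hypothesis on the $f_j$ is equivalent to general position of the associated moving hyperplanes over $\mathcal{P}_c^1$ (correctly, using the independence of the $g_i$), and then observe that iterating the exceptional paired value property across $n$ shifts — with the exceptional set enlarged to the still-finite union $\bigcup_{\nu=0}^{n-1}(E_j-\nu c)$ — makes all but finitely many zeros of $f_j$ $n$-successive and $c$-separated, so each $\tilde{N}_g^{[n,c]}(r,L_{H_j})=O(\log r)$. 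The two arguments are close in substance, since Lemma~\ref{movinglemma} performs in aggregate the same pole/zero bookkeeping that the paper carries out via $H$, but yours has the advantage of reusing the truncated second main theorem as a black box, at the cost of being less self-contained. One caveat, common to both proofs: the final contradiction requires $\log r=o(T_g(r))$ — in your version because the finitely many aperiodic zeros still contribute $O(\log r)$, and in the paper's because the definition of exceptional paired value permits finitely many exceptional $a$-points which the paper silently discards when asserting that $H$ has no poles at all. Both proofs therefore tacitly assume $g$ is transcendental; your stated justification ("$T_g(r)$ grows at least as fast as $\log r$") gives only $T_g(r)\geq C\log r$, which is not quite $\log r=o(T_g(r))$, but this is an edge case the paper does not handle either.
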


\begin{proof}[Proof of Corollary \ref{thm1.4}. ] Suppose that $\varsigma(g)<1$. Then from Theorem \ref{thm1.2}, we have that
\begin{eqnarray}\label{hacaaa}
 % \nonumber to remove numbering (before each equation)
   (q-n)T_{g}(r)\leq N\left(r,\frac{1}{L}\right)-N(r, L)+o(T_{g}(r)),
 \end{eqnarray}
where $r$ approaches infinity outside of an exceptional set $E$ of finite logarithmic measure.

But combining \eqref{eahhhh} with the assumption that $0$ is exceptional paired value of $f_{i}$, we deduce that
$H(z)$ does not have poles. Thus $L(z)$ does not have zeros on $z\in \mathbb{C}\setminus \mathfrak{A}$,
where $\mathfrak{A} \subset E$ is defined as in the proof of Theorem~\ref{lem2.1}. So we get a contradiction from \eqref{hacaaa}, and
thus we have $\varsigma(g) \geq 1$.
\end{proof}

%\begin{proposition} \label{prop1.1}
%Let $V(x)$ satisfy {\rm  (A1)} and $C_0$ satisfy
%$0<C_0\leq2b$. Then for the solution $u(t,x)$ to
%\eqref{eqn}-\eqref{initial} one has the following two possibilities:
%\begin{itemize}
%\item[(I)] When $0 < C_0 \le b$ it holds that
%\begin{equation} \label{e1.5}
%E_u(t) = O(t^{-1+\delta}),\quad t \to +\infty
%\end{equation}
%for  any $\delta > 0$ satisfying $1-\frac{C_0}{b}<\delta <1$;

%\item[(II)] when $b < C_0 \le 2b$ it holds that
%\begin{equation} \label{e1.6}
%E_u(t) = O(t^{-\frac{C_0}{b}+\delta}),\quad \text{as }t \to +\infty
%\end{equation}
%for any $\delta > 0$.
%\end{itemize}
%\end{proposition}

%\begin{remark} \rm
%When one compares these results with the one for the scalar wave equations
% due to \cite{ITY} the obtained decay rates are (almost) optimal.
%\end{remark}

\section{Proof of Theorem \ref{thm1.2}}\label{proofsec}

In order to prove Theorem~\ref{thm1.2}, we introduce some lemmas firstly. The following
lemma is about the growth of non-decreasing real-valued functions.

\begin{lemma}[\cite{halburdkt:14}] \label{lem2.2}
Let $T:[0,+\infty)\rightarrow [0,+\infty)$ be a non-decreasing
continuous function and let $s\in (0,\infty)$. If the hyper-order of $T$ is strictly less than one, i.e.,
\begin{eqnarray*}
% \nonumber to remove numbering (before each equation)
  \limsup_{r\rightarrow\infty}\frac{\log \log T(r)}{\log r}=\varsigma <1
\end{eqnarray*}
and $\delta \in (0, 1-\varsigma)$ then
\begin{eqnarray*}
% \nonumber to remove numbering (before each equation)
  T(r+s)=T(r)+o\left( \frac{T(r)}{r^{\delta}}\right)
\end{eqnarray*}
where $r$ runs to infinity outside of a set of finite logarithmic measure.
\end{lemma}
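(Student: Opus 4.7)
My approach is to establish, for each fixed $\varepsilon>0$, that the set
\[
F_\varepsilon := \{r\geq r_0 : T(r+s)-T(r) > \varepsilon\, T(r)/r^\delta\}
\]
has finite logarithmic measure, and then to pass to the $o(\cdot)$ conclusion via a shrinking-$\varepsilon$ diagonal argument that consolidates all the exceptional behaviour into a single set $E$ with $\int_E dr/r<\infty$.

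The hyper-order hypothesis enters only at the start: since $\varsigma<1$ and $\delta<1-\varsigma$, I fix $\eta>0$ with $\varsigma+\eta+\delta<1$ and choose $r_0$ so that $\log T(r)\leq r^{\varsigma+\eta}$ for $r\geq r_0$. On $F_\varepsilon$ the bound $T(r+s)/T(r)>1+\varepsilon r^{-\delta}$ gives
\[
\log T(r+s)-\log T(r)\geq \tfrac{\varepsilon}{2}\, r^{-\delta}
\]
for all sufficiently large $r$. Integrating over $F_\varepsilon\cap[r_0,R]$ and then extending the domain to all of $[r_0,R]$ (the integrand is non-negative since $T$ is non-decreasing), the right-hand side telescopes into
\[
\int_{r_0}^R \bigl[\log T(r+s)-\log T(r)\bigr]\,dr \;\leq\; s\log T(R+s) \;\leq\; s(R+s)^{\varsigma+\eta},
\]
and therefore $\int_{F_\varepsilon\cap[r_0,R]} r^{-\delta}\,dr \leq C\, R^{\varsigma+\eta}$ with a constant $C$ depending on $\varepsilon$ but not on $R$.

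Next I dyadically decompose $F_\varepsilon$: on $[2^n,2^{n+1}]$ the factor $r^{-\delta}$ is comparable to $2^{-n\delta}$, so the previous inequality yields $|F_\varepsilon\cap[2^n,2^{n+1}]|\leq C'\, 2^{(n+1)(\varsigma+\eta+\delta)}$, and hence
\[
\int_{F_\varepsilon} \frac{dr}{r} \;\leq\; C''\sum_{n\geq n_0} 2^{n(\varsigma+\eta+\delta-1)} \;<\; \infty,
\]
where convergence is guaranteed precisely by the choice $\varsigma+\eta+\delta<1$. Finally I pick a sequence $\varepsilon_k\downarrow 0$ and radii $R_k\uparrow\infty$ such that $\int_{F_{\varepsilon_k}\cap[R_k,\infty)}dr/r\leq 2^{-k}$; the union of these tails has finite logarithmic measure, and outside it the inequality $T(r+s)-T(r)\leq \varepsilon_k T(r)/r^\delta$ eventually holds for every $k$, which is the desired $o$-statement.

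The main obstacle I anticipate is the dyadic bookkeeping in the logarithmic-measure estimate: the exponent $\varsigma+\eta+\delta-1$ is negative by only an arbitrarily thin strict margin, so one has to be careful when fixing $\eta$ at the very beginning and when tracking the constants $C,C',C''$, to ensure that no hidden $\varepsilon$-dependence slips in through the telescoping step and spoils the final diagonal argument.
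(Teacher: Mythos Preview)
The paper does not prove this lemma; it is quoted from \cite{halburdkt:14} and used as a black box, so there is no ``paper's own proof'' to compare against. Your argument is correct and is essentially the standard Borel-type growth argument one finds in the cited reference: integrate $\log T(r+s)-\log T(r)$ over the bad set, telescope to bound the integral by $s\log T(R+s)\le s(R+s)^{\varsigma+\eta}$, and conclude that the bad set has finite logarithmic measure. Two minor points worth tightening: (i) the telescoping step and the inequality $\log T(r+s)-\log T(r)\ge 0$ tacitly assume $T(r)\ge 1$ for $r\ge r_0$; this is harmless since the lemma is vacuous when $T$ is bounded and otherwise one may enlarge $r_0$, but it should be stated. (ii) In the diagonal step you should say explicitly that the $R_k$ are chosen increasing to $\infty$, so that for $r\notin E$ and $r\ge R_k$ one indeed has $r\notin F_{\varepsilon_k}$; you do this implicitly but it is the only place where the $\varepsilon$-dependence of the constants $C,C',C''$ could cause trouble, and it does not.
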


The following lemma is an extension of the analogue of the lemma on the logarithmic derivative for
finite-order meromorphic functions (\cite[Lemma 2.3]{halburdk:06JMAA}, \cite[Theorem~2.1]{halburdk:06AASFM})
to the case of functions with the hyper-order less than one.

\begin{lemma}[\cite{halburdkt:14}] \label{lem2.3}
Let $f(z)$ be a non-constant meromorphic function and $c\in \mathbb{C}$.
If $\varsigma(f)=\varsigma<1$ and $\varepsilon>0$, then
$$m\left(r,\frac{f(z+c)}{f(z)}\right)=o\left(\frac{T(r,f)}{r^{1-\varsigma-\varepsilon}}\right)$$
for all $r$ outside of a set of finite logarithmic measure.
\end{lemma}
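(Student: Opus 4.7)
\emph{Proof plan.} I would follow the classical route to the difference analogue of the lemma on the logarithmic derivative, adapted to the hyper-order less than one regime by invoking Lemma~\ref{lem2.2}.

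First, I would derive a pointwise estimate via Poisson--Jensen. For $R > r + |c|$, apply the Poisson--Jensen representation of $\log f(w)$ in $|w|\leq R$, evaluate it at $w=z$ and $w=z+c$, and subtract. The smooth part involves the difference of two Poisson kernels, which is of size $O(|c|R/(R-|z|-|c|)^2)$, while the discrete part consists of logarithms of Blaschke factors at the zeros and poles of $f$ inside $|w|<R$. This yields a pointwise estimate of the form
$$\Bigl|\log\Bigl|\frac{f(z+c)}{f(z)}\Bigr|\Bigr| \leq \frac{K|c|R}{(R-r-|c|)^2}\bigl(m(R,f)+m(R,1/f)\bigr) + D(z),$$
with $D(z)$ collecting the Blaschke contributions from zeros and poles in $|w|<R$, valid whenever $z$ and $z+c$ are regular points.

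Next, I would integrate this over $|z|=r$. Bounding the mean of $D(z)$ by $N(R,f)+N(R,1/f)$ via a Jensen-type computation gives
$$m\Bigl(r,\frac{f(z+c)}{f(z)}\Bigr) \leq \frac{K'|c|R}{(R-r-|c|)^2}\,T(R,f) + O(\log R).$$
I would then choose $R = r + \alpha(r)$ with $\alpha(r)$ shrinking at a carefully chosen rate, and apply Lemma~\ref{lem2.2} (or rather a refinement of it that permits non-constant increments, available because the hyper-order of $T(r,f)$ is strictly less than one) to replace $T(R,f)$ by $T(r,f) + o(T(r,f)/r^\delta)$ outside a set of finite logarithmic measure, with $\delta \in (0,1-\varsigma)$ arbitrary. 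Optimizing $\alpha(r)$ so that the prefactor $R/\alpha(r)^2$ grows like $r^{\varsigma+\varepsilon}$ while the Lemma~\ref{lem2.2} bound still applies yields the required estimate $m(r,f(z+c)/f(z)) = o(T(r,f)/r^{1-\varsigma-\varepsilon})$.

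The main obstacle is the balancing of two competing requirements: the Poisson--Jensen estimate wants $R-r$ large to keep the prefactor small, whereas Lemma~\ref{lem2.2} as stated controls $T(R,f) - T(r,f)$ only for fixed bounded increments. Obtaining the factor $r^{\varsigma+\varepsilon-1}$ rather than the naive $O(r)$ requires extending Lemma~\ref{lem2.2} to increments $\alpha(r) \to 0$ at a controlled rate, which is carried out via a Borel-type covering argument applied to the non-decreasing function $T(\cdot,f)$; afterwards one must track the exceptional sets arising at each scale and verify that their union remains of finite logarithmic measure. This bookkeeping, rather than the Poisson--Jensen estimate itself, is the technical heart of the proof.
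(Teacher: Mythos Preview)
The paper does not prove this lemma at all: it is quoted verbatim from \cite{halburdkt:14} and used as a black box. So there is no ``paper's own proof'' to compare against here.

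That said, your sketch is essentially the argument carried out in the cited source (and, for the finite-order precursor, in \cite{halburdk:06JMAA}): a Poisson--Jensen representation to obtain a pointwise bound on $\log|f(z+c)/f(z)|$, integration over $|z|=r$ to convert this to a proximity-function estimate involving $T(R,f)$ with $R>r$, and then a Borel-type growth lemma (of which Lemma~\ref{lem2.2} is the relevant instance) to pull $T(R,f)$ back to $T(r,f)$ with the correct error. Your identification of the balancing act between the Poisson--Jensen prefactor and the growth-lemma increment as the crux is accurate. One small correction: in the actual argument the increment is not taken to tend to zero; rather one chooses $R$ of the form $r+r/\phi(r)$ with $\phi$ growing slowly, and the refined Borel lemma controls $T(R,f)-T(r,f)$ for such increments directly. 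The exceptional-set bookkeeping is indeed where most of the work lies.
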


The following lemma shows that Cartan and Nevanlinna characteristic functions are essentially the same in the one-dimensional case.

\begin{lemma}[\cite{gundersenh:04}] \label{lem2.4}
Let $h_{1}$ and $h_{2}$ be two linearly
independent entire functions that have no common zeros, and
set $f=h_{1}/h_{2}$. For positive $r$, set
\begin{eqnarray*}
% \nonumber to remove numbering (before each equation)
  T_{f}(r)=\frac{1}{2\pi}\int_{0}^{^{2\pi}} u(re^{i\theta})d\theta-u(0),\quad
  \textrm{where}\quad u(z)=\sup \{\log|h_{1}(z)|, \log|h_{2}(z)|\}.
\end{eqnarray*}
Then
\begin{eqnarray*}
% \nonumber to remove numbering (before each equation)
  T_{f}(r)=T(r,f)+O(1) \quad
  \textrm{as}\quad    r \to \infty.
\end{eqnarray*}
\end{lemma}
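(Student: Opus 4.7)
The plan is to reduce the Cartan characteristic $T_f(r)$ to the sum $m(r,f)+N(r,f)$ by exploiting the elementary identity
$$\max\{\log|h_1(z)|,\log|h_2(z)|\}=\log|h_2(z)|+\log^+\left|\frac{h_1(z)}{h_2(z)}\right|,$$
which holds at every $z$ where $h_2(z)\neq 0$ (and therefore almost everywhere on any circle $|z|=r$). First I would substitute this identity into the definition of $T_f(r)$ to obtain
$$T_f(r)=\frac{1}{2\pi}\int_0^{2\pi}\log^+|f(re^{i\theta})|\,d\theta+\frac{1}{2\pi}\int_0^{2\pi}\log|h_2(re^{i\theta})|\,d\theta-u(0),$$
where the first integral is exactly $m(r,f)$.

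Next, since $h_2$ is entire, I would apply Jensen's formula to $h_2$, which yields
$$\frac{1}{2\pi}\int_0^{2\pi}\log|h_2(re^{i\theta})|\,d\theta=N\!\left(r,\frac{1}{h_2}\right)+C,$$
where $C$ is a constant depending only on the leading non-vanishing Taylor coefficient of $h_2$ at the origin (equal to $\log|h_2(0)|$ when $h_2(0)\neq 0$, and $\log|c_k|$ if $h_2(z)=c_k z^k+\cdots$ near $0$).

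The crucial structural input is the assumption that $h_1$ and $h_2$ have \emph{no common zeros}: this forces the poles of $f=h_1/h_2$ to coincide, with multiplicity, with the zeros of $h_2$, so that
$$N\!\left(r,\frac{1}{h_2}\right)=N(r,f).$$
Combining the previous displays gives
$$T_f(r)=m(r,f)+N(r,f)+C-u(0)=T(r,f)+O(1),$$
which is the asserted relation.

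There is no serious obstacle here; the only minor technicality to handle carefully is the contribution from the origin (the constants $C$ and $u(0)$, which may differ depending on whether $h_1(0)$ or $h_2(0)$ vanishes) and the possibility that $h_2$ has zeros on the integration circle $|z|=r$. The first is absorbed cleanly into the $O(1)$ term, and the second is exactly what Jensen's formula is designed to accommodate, as the integrand is absolutely integrable on every such circle.
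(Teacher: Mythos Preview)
Your argument is correct and is in fact the standard proof of this identity. Note, however, that the paper does not supply its own proof of this lemma: it is quoted with a citation to Gundersen--Hayman \cite{gundersenh:04} and used as a black box, so there is no in-paper proof to compare against. Your derivation via the pointwise identity $\max\{\log|h_1|,\log|h_2|\}=\log|h_2|+\log^{+}|h_1/h_2|$ together with Jensen's formula and the no-common-zeros hypothesis is exactly the classical route and would serve perfectly well as a self-contained justification.
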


For two meromorphic functions $f$ and $g$ (where $f\not\equiv 0$ and $g\not\equiv 0$),
let $N(r,0;f,g)$ denote the counting function of the common zeros of
$f$ and $g$, counted in the following manner. If $z_{0}$ is a zero of
$f$ with multiplicity $m$ and a zero of $g$ with multiplicity $n$, then $N(r,0;f,g)$ counts
$z_{0}$ exactly $k$ times, where $k=\min \{m, n\}$.

The following lemma is a generalization of \cite[Lemma 8.1]{gundersenh:04}. The
difference is that in the original version, $f_{j}$ are linear combinations of the
functions $g_{0}, \ldots, g_{n}$ with constant coefficients, while in the following version,
we generalize the coefficients into small meromorphic functions.

\begin{lemma} \label{lem2.5}
Let $g=[g_{0}:\cdots:g_{n}]$ with $n\geq1$ be a reduced representation
of a non-constant holomorphic curve $g$.
If
\begin{eqnarray*}
    % \nonumber to remove numbering (before each equation)
      f_{j}=\sum_{i=0}^{n}a_{ij}g_{i} \quad  j=0,\, \ldots, \, q, \, q>n,
\end{eqnarray*}
where $a_{ij}$ are entire functions satisfying
$T(r,a_{ij})= o(T_{g}(r))$, such that
any $n+1$ of the $q+1$ functions $f_{0},\, \ldots,\, f_{q}$ are linearly independent over $\mathcal{P}_{c}^{1}$,
then we have
\begin{eqnarray*}
T(r,g_{j}/g_{m})+N(r,0;g_{j},g_{m} )\leq (1+o(1))T_{g}(r), \quad
  \textrm{as}\quad   r \to \infty,
\end{eqnarray*}
and for any $\mu$ and $\nu$, we have
\begin{eqnarray*}
T(r,f_{\mu}/f_{\nu})+N(r,0;f_{\mu},f_{\nu} )\leq (1+o(1))T_{g}(r), \quad
  \textrm{as}\quad   r\to \infty,
\end{eqnarray*}
where  $\mu$ and $\nu$
are distinct integers in the set $\{0, \ldots, q \}$.
%,for all $r$ outside of a set of finite logarithmic measure
\end{lemma}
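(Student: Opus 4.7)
Both inequalities will follow from Jensen's formula combined with Lemma~\ref{lem2.4}. The crucial idea is that dividing out a common entire factor converts the Cartan characteristic of the ratio into a Nevanlinna characteristic, and the lost factor is recovered by Jensen as an integrated counting function of common zeros.

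\textbf{First inequality.} Using Weierstrass's theorem, I would construct an entire function $d$ whose zeros (with multiplicity) match the common zeros of $g_j$ and $g_m$, taking the minimum of their multiplicities. Writing $g_j=d\tilde g_j$ and $g_m=d\tilde g_m$, the functions $\tilde g_j,\tilde g_m$ are entire and share no common zero. Since $g_j/g_m=\tilde g_j/\tilde g_m$, Lemma~\ref{lem2.4} identifies $T(r,g_j/g_m)$ with the Cartan characteristic $T_{g_j/g_m}(r)$ up to $O(1)$. Substituting $|\tilde g_k|=|g_k|/|d|$ inside the integrand and applying Jensen to $d$ converts the $\int\log|d|$ contribution into $N(r,1/d)+O(1)=N(r,0;g_j,g_m)+O(1)$. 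The resulting identity
\begin{equation*}
T(r,g_j/g_m)+N(r,0;g_j,g_m)=\int_0^{2\pi}\max\{\log|g_j|,\log|g_m|\}\frac{d\theta}{2\pi}+O(1)
\end{equation*}
is then bounded above by $T_g(r)+O(1)=(1+o(1))T_g(r)$ thanks to the pointwise inequality $\max\{|g_j|,|g_m|\}\leq\max_k|g_k|$ and the fact that $T_g(r)\to\infty$ for the non-constant curve $g$.

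\textbf{Second inequality.} I would run exactly the same argument with $f_\mu$ and $f_\nu$ in place of $g_j$ and $g_m$ (this is legitimate because the independence hypothesis ensures $f_\mu,f_\nu\not\equiv 0$), arriving at
\begin{equation*}
T(r,f_\mu/f_\nu)+N(r,0;f_\mu,f_\nu)=\int_0^{2\pi}\max\{\log|f_\mu|,\log|f_\nu|\}\frac{d\theta}{2\pi}+O(1).
\end{equation*}
The new ingredient is the pointwise estimate $|f_j(z)|\leq\bigl(\sum_i|a_{ij}(z)|\bigr)\max_k|g_k(z)|$ coming from the definition $f_j=\sum_i a_{ij}g_i$, which implies
\begin{equation*}
\max\{\log|f_\mu|,\log|f_\nu|\}\leq\max_{0\leq k\leq n}\log|g_k|+\log^+\!\sum_{i=0}^n\bigl(|a_{i\mu}|+|a_{i\nu}|\bigr)+O(1).
\end{equation*}
Integrating, the first summand contributes $T_g(r)+O(1)$, while the second is majorised by $\sum_i\bigl(m(r,a_{i\mu})+m(r,a_{i\nu})\bigr)+O(1)$. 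Since each $a_{ij}$ is entire and $T(r,a_{ij})=m(r,a_{ij})=o(T_g(r))$ by hypothesis, this correction is $o(T_g(r))$, yielding the claimed bound $(1+o(1))T_g(r)$.

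\textbf{Main obstacle.} The proof is almost entirely routine. The only care needed is standard Nevanlinna bookkeeping: ensuring that the auxiliary factor $d$ is adjusted (e.g.\ by a prefactor $z^k$) so that Jensen's formula applies cleanly when $d(0)=0$, absorbing boundary constants at the origin into $O(1)$, and invoking $\log(a+b)\leq\log^+a+\log^+b+O(1)$ together with $m(r,\sum_i\varphi_i)\leq\sum_i m(r,\varphi_i)+O(1)$ when converting the coefficient-side integral into proximity functions of the individual $a_{ij}$. None of these steps is substantive; the substance lies entirely in combining Jensen with Lemma~\ref{lem2.4} and in the smallness hypothesis $T(r,a_{ij})=o(T_g(r))$.
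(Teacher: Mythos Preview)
Your proposal is correct and follows essentially the same route as the paper: the paper likewise factors out the common-zero divisor (writing $f_\mu=h_\mu\omega_{\mu\nu}$, $f_\nu=h_\nu\omega_{\mu\nu}$, which is exactly your $d$), invokes Lemma~\ref{lem2.4} on the coprime pair, recovers $N(r,0;f_\mu,f_\nu)$ from the $\log|\omega_{\mu\nu}|$ integral via Jensen, and bounds $\sup\{\log|f_\mu|,\log|f_\nu|\}$ pointwise by $u(z)+\sum_i\log^+|a_{i\mu}|+\sum_i\log^+|a_{i\nu}|+O(1)$. The only additional care taken in the paper is an explicit ``recesses'' argument to justify the integral identities on circles $|z|=r$ that pass through zeros of $\omega_{\mu\nu}$ or $\sup\{|f_\mu|,|f_\nu|\}$, but this is a routine technicality you have implicitly subsumed into your $O(1)$ bookkeeping.
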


\begin{proof} The proof of the first inequality is the same as the proof of
the corresponding inequality in \cite[Lemma 8.1]{gundersenh:04}. Next we prove
the second inequality. Parts of the proof are based on modifications of the ideas behind the proof of \cite[Lemma 8.1]{gundersenh:04}.
 Suppose that $f_{\mu}$ and $f_{\nu}$ are any
two distinct functions of the functions $f_{0}, f_{1}, \ldots, f_{q}$. Since
\begin{eqnarray*}
    % \nonumber to remove numbering (before each equation)
      f_{j}=\sum_{i=0}^{n}a_{ij}g_{i} \quad  j=\mu, \nu,
\end{eqnarray*}
it follows by the definition of $u(z)$ in \eqref{sec1thm1.11} that
\begin{eqnarray}\label{caaae}
c(z)&:=& \sup \{\log|f_{\mu}(z)|, \log|f_{\nu}(z)|\}= \sup \left\{\log\left|\sum_{i=0}^{n}a_{i\mu}(z)g_{i}(z)\right|, \log\left|\sum_{j=0}^{n}a_{j\nu}(z)g_{j}(z)\right|\right\} \nonumber \\
&\leq&  \sup \left\{\log \left(\sum_{i=0}^{n}|a_{i\mu}(z)|\cdot|g_{i}(z)|\right),
 \log \left(\sum_{j=0}^{n}|a_{j\nu}(z)|\cdot|g_{j}(z)|\right)\right\}\nonumber \\
&\leq& \sup \left\{ \log \left(\sum_{i=0}^{n}|a_{i\mu}(z)|e^{u(z)}\right),
 \log \left(\sum_{j=0}^{n}|a_{j\nu}(z)|e^{u(z)}\right)\right\}\nonumber \\
&\leq& u(z)+ \sup \left\{ \log \left(\sum_{i=0}^{n}|a_{i\mu}(z)|\right),
 \log \left(\sum_{j=0}^{n}|a_{j\nu}(z)|\right)\right\}\nonumber \\
&\leq& \sup_{0 \leq i \leq n} \log|g_{i}(z)| +
 \sup \left\{ \log^{+} \left(\sum_{i=0}^{n}|a_{i\mu}(z)|\right),
 \log^{+} \left(\sum_{j=0}^{n}|a_{j\nu}(z)|\right)\right\}\nonumber \\
 &\leq& \sup_{0 \leq i \leq n} \log|g_{i}(z)| +\sum_{i=0}^{n}\log^{+} |a_{i\mu}(z)|%\nonumber \\
 +\sum_{j=0}^{n}\log^{+} |a_{j\nu}(z)|+2\log(n+1),
\end{eqnarray}
whenever $z\in \mathbb{C}$.

Since $f_{\mu}$ and $f_{\nu}$ are linearly independent entire functions, there exist entire functions $h_{\mu}, h_{\nu}, \omega_{\mu\nu}$, where
$h_{\mu}, h_{\nu}$ are linearly independent and have no common zeros such that
\begin{eqnarray}\label{caaac}
    % \nonumber to remove numbering (before each equation)
     f_{\mu}=h_{\mu}\omega_{\mu\nu} \quad and \quad f_{\nu}=h_{\nu}\omega_{\mu\nu}
\end{eqnarray}
where $N(r,0, \omega_{\mu\nu})=N(r,0; f_{\mu}, f_{\nu})$.
Set
\begin{eqnarray}\label{caaaa}
    % \nonumber to remove numbering (before each equation)
     t(z)=\sup \{\log |h_{\mu}(z)|, \log |h_{\nu}(z)|\}.
\end{eqnarray}
By applying Lemma \ref{lem2.4} to $h_{\mu}$ and $h_{\nu}$, we obtain
\begin{eqnarray}\label{caaab}
    % \nonumber to remove numbering (before each equation)
     T(r, f_{\mu}/f_{\nu})=T(r,h_{\mu}/h_{\nu})=
     \frac{1}{2\pi}\int_{0}^{2\pi}t(re^{i\theta})d\theta
+O(1)   \quad \textrm{as} \quad r\to \infty.
\end{eqnarray}
From \eqref{caaac} and \eqref{caaaa}, we have
\begin{eqnarray*}
% \nonumber to remove numbering (before each equation)
  t(z)&=&\sup\{\log|h_{\mu}(z)|, \log|h_{\nu}(z)|\} \\
  &=& \sup \left\{\log \left|\frac{f_{\mu}}{\omega_{\mu\nu} }\right|, \log\left|\frac{f_{\nu}}{\omega_{\mu\nu} }\right|\right\}\\
  &=& \sup\{\log|f_{\mu}|, \log|f_{\nu}|\}-\log|\omega_{\mu\nu}|
\end{eqnarray*}
for any $z$ satisfying $\sup\{|f_{\mu}(z)|, |f_{\nu}(z)|\}\neq 0$
and $|\omega_{\mu\nu}(z)|\neq 0$. Thus we have
\begin{eqnarray}\label{eqcaaabz}
% \nonumber to remove numbering (before each equation)
  \frac{1}{2\pi}\int_{0}^{2\pi}t(re^{i\theta})d\theta =  \frac{1}{2\pi}\int_{0}^{2\pi}c(re^{i\theta})d\theta
  -\frac{1}{2\pi}\int_{0}^{2\pi}\log |\omega_{\mu\nu}(re^{i\theta})|d\theta
\end{eqnarray}
for those positive $r$ for which  $\sup\{|f_{\mu}(z)|, |f_{\nu}(z)|\}$ and
$|\omega_{\mu\nu}(z)|$ have no zeros on $|z|=r$.

If  $\sup\{|f_{\mu}(z)|, |f_{\nu}(z)|\}$ or
$|\omega_{\mu\nu}(z)|$ have zeros on the circle $\{z:|z|=r\}$ (where $r>0$), then following a
similar argument as in the proof of Theorem~\ref{thm1.312}, we will obtain that \eqref{eqcaaabz} holds on $|z|=r$.
Hence \eqref{eqcaaabz} holds for all positive $r$.

Following a similar method as above, we obtain from \eqref{caaae} that
\begin{eqnarray}\label{caaaeaaabbbcccddd}
\frac{1}{2\pi}\int_{0}^{2\pi} c(re^{i\theta})d\theta \leq T_{g}(r)+\sum_{i=0}^{n}m(r,a_{i\mu})
+\sum_{j=0}^{n}m(r,a_{j\nu})+O(1)
\end{eqnarray}
holds for all positive $r$. Hence, combining \eqref{caaab}, \eqref{eqcaaabz} and \eqref{caaaeaaabbbcccddd}, we obtain that
\begin{eqnarray}\label{sec3caaad}
    % \nonumber to remove numbering (before each equation)
     T(r, f_{\mu}/f_{\nu})&=& \frac{1}{2\pi}\int_{0}^{2\pi}c(re^{i\theta})d\theta
     -\frac{1}{2\pi}\int_{0}^{2\pi}\log |\omega_{\mu\nu}(re^{i\theta})|d\theta +O(1) \nonumber  \\
     &\leq& T_{g}(r) +\sum_{i=0}^{n}m(r,a_{i\mu})
+\sum_{j=0}^{n}m(r,a_{j\nu})  \nonumber\\
     &&-\frac{1}{2\pi}\int_{0}^{2\pi}
     \log |\omega_{\mu\nu}(re^{i\theta})|d\theta +O(1)  \nonumber \\
     &\leq& T_{g}(r) +\sum_{i=0}^{n}m(r,a_{i\mu})+\sum_{j=0}^{n}m(r,a_{j\nu})  \nonumber\\
     &&-\left(m(r,\omega_{\mu\nu})-m\left(r,\frac{1}{\omega_{\mu\nu}}\right)\right)+O(1)
     \quad \textrm{as} \quad r\to \infty.
\end{eqnarray}
Since $\omega_{\mu\nu}$ is entire, then from Nevanlinna's first main theorem, we get
\begin{eqnarray}\label{sec3caaae}
m(r,\omega_{\mu\nu})-m\left(r,\frac{1}{\omega_{\mu\nu}}\right)&=&T(r,\omega_{\mu\nu})-N(r,\omega_{\mu\nu})
-\left(T\left(r,\frac{1}{\omega_{\mu\nu}}\right)-N\left(r,\frac{1}{\omega_{\mu\nu}}\right)\right)\nonumber \\
&=&N\left(r,\frac{1}{\omega_{\mu\nu}}\right).
\end{eqnarray}
Since $T_{g}(r)\to \infty$ as $r\to \infty$, we have that
$
O(1)=o(T_{g}(r))
$, and
so from \eqref{sec3caaad} and \eqref{sec3caaae}, we have
\begin{eqnarray*}
T(r, f_{\mu}/f_{\nu}) &\leq& (1+o(1))T_{g}(r)-N(r,0,\omega_{\mu\nu}) \\
&=& (1+o(1))T_{g}(r)-N(r,0;f_{\mu},f_{\nu} )
 \quad as \quad  r\to \infty.
\end{eqnarray*}
Thus the conclusion holds.
\end{proof}

%\begin{lemma} \label{lem2.7}
%Let $c\in \mathbb{C}$, and let $g=[g_{0}:\cdots:g_{n}]$ be a holomorphic
%curve such that
% $\varsigma(g)<1$ and such that all zeros of $g_{0}, \ldots, g_{n}$
% are forward invariant with respect to the translation
% $\tau(z)=z+c$. If $g_{i}/g_{j}\not\in \mathcal{P}_{c}^{1}$ for
% all $i,j \in \{0, \ldots, n\}$ such that $i\neq j$, then
% $g_{0}, \ldots, g_{n}$ are linearly independent over
%$\mathcal{P}_{c}^{1}$.
%  \end{lemma}
%\section{Proof of the theorem \ref{thm1.2}$+$ old materials from trazuya Tohge}

\begin{proof}[Proof of Theorem~\ref{thm1.2}. ] By Theorem \ref{lem2.1}, the auxiliary function
\begin{eqnarray}\label{eaee}
h(z)=\max_{\{k_{j}\}_{j=0}^{q-n-1} \subset \{ 0, \ldots, q \} } \log |f_{k_{0}}(z)\cdots f_{k_{q-n-1}}(z)|
\end{eqnarray}
gives a finite real number for all $z\in \mathbb{C}\setminus \mathfrak{A}$,
where the set $\mathfrak{A}$ is of finite logarithmic measure as defined  in Theorem \ref{lem2.1}. We let $\{a_{0},\, \ldots,\, a_{q-n-1}\}\subset \{0,\, \ldots,\, q\}$,
and
$$\{b_{0},\, \ldots,\, b_{n}\}=\{0,\, \ldots,\, q\} \setminus
\{a_{0},\, \ldots,\, a_{q-n-1}\}.$$
Then we have \eqref{sec3aaa} and \eqref{sec3aabbbb}. By using the
same method as in the proof of Theorem~\ref{thm2.1}, we have
\eqref{sec3aah} and \eqref{sec3aai}.

Next, we set the auxiliary function
\begin{eqnarray}\label{eaf}
% \nonumber to remove numbering (before each equation)
 \widetilde{L}:=\frac{f_{0}\overline{f}_{1}\cdots \overline{f}_{n}^{[n]} f_{n+1} \cdots f_{q}}{C(g_{0},\, \cdots,\, g_{n})},
\end{eqnarray}
which is also well defined since $C(g_{0},\, \ldots,\, g_{n})\not\equiv 0$. Obviously, $\widetilde{L}$ is meromorphic.
Following the reference \cite{halburdkt:14}, we now prove
\begin{eqnarray}\label{eaffff}
N\left(r,\frac{1}{\widetilde{L}} \right)-N(r,\widetilde{L})\leq N\left(r,\frac{1}{L} \right)-N(r,L)+o(T_{g}(r)).
\end{eqnarray}
Consider first the counting functions
\begin{eqnarray}\label{eas}
% \nonumber to remove numbering (before each equation)
N\left(r,\frac{1}{\overline{f}_{j}^{[j]}} \right) \leq N\left(r+j, \frac{1}{f_{j}}\right)
\end{eqnarray}
for $j=1,\ldots, n$. In order to apply Lemma \ref{lem2.2} to the right side of inequality
\eqref{eas}, we need to consider the growth of $N(r, 1/f_{j})$. Since
each $f_{j}$ is a linear combination of entire functions $g_{0}, \ldots, g_{n}$ with small periodic coefficients, we have
\begin{eqnarray*}\label{sec4aaa}
|f_{j}|&\leq& \sum_{i=0}^{n}|a_{ij}|\cdot|g_{i}| \nonumber\\
&\leq& \sum_{i=0}^{n} |a_{ij}|\cdot \max_{i=0,\ldots,n}| g_{i}| \nonumber\\
&\leq& \max_{i=0,\ldots,n}| g_{i}|\left(\sum_{i=0}^{n}|a_{ij}|\right).
\end{eqnarray*}
So we have
\begin{eqnarray*}
\log|f_{j}| &\leq& \log \left(  \max_{i=0,\ldots,n}|g_{i}|\left(\sum_{i=0}^{n}|a_{ij}|\right)  \right) \nonumber\\
&=& \log \left(  \max_{i=0,\ldots,n}|g_{i}|\right) +  \log \left(\sum_{i=0}^{n}|a_{ij}|\right) \nonumber\\
&=& \max_{i=0,\ldots,n}\log   |g_{i}| +  \log \left(\sum_{i=0}^{n}|a_{ij}|\right) \nonumber\\
&\leq& \sup_{i=0,\ldots,n}\log|g_{i}|+\log^{+}\left(\sum_{i=0}^{n}|a_{ij}|\right)\nonumber\\
&\leq& \sup_{i=0,\ldots,n}\log|g_{i}|+\sum_{i=0}^{n}\log^{+}|a_{ij}|+O(1).
\end{eqnarray*}

Following a similar method as in the proof of Theorem~\ref{thm1.312}, %or Lemma~\ref{lem2.5},
we get that
\begin{eqnarray*}
\frac{1}{2\pi}\int _{0}^{2\pi}\log |f_{j}(re^{i\theta})|d\theta \leq T_{g}(r)+\sum_{i=0}^{n}m(r,a_{ij})+O(1)
\end{eqnarray*}
holds for all positive $r$. By Poisson-Jensen formula we have that
\begin{eqnarray}\label{eat}
% \nonumber to remove numbering (before each equation)
N\left(r,\frac{1}{f_{j}}\right)&=&\int_{0}^{2\pi}\log|f_{j}(re^{i\theta})|\frac{d\theta}{2\pi} \nonumber \\
&\leq&  T_{g}(r) +\sum_{i=0}^{n}m(r,a_{ij}) +O(1) \nonumber\\
&=&  (1+o(1))T_{g}(r), \quad \textrm{as} \quad r \to \infty.
\end{eqnarray}
Since $\varsigma(g)<1$,
it follows by \eqref{eat} that
\begin{eqnarray*}\label{eau}
 \delta_{j}:=\limsup_{r\rightarrow \infty}\frac{\log \log N\left(r,\frac{1}{f_{j}}\right)}{\log r}\leq \varsigma(g)<1
\end{eqnarray*}
for all $j=1,\ldots, n$. Therefore, by  Lemma \ref{lem2.2}, we have
\begin{eqnarray}\label{eav}
 N\left(r+j,\frac{1}{f_{j}}\right)
= N\left(r,\frac{1}{f_{j}}\right)+o\left(\frac{N(r,\frac{1}{f_{j}})}{r^{1-\delta_{j}-\varepsilon}} \right), \quad \textrm{as} \quad r \to \infty,
\end{eqnarray}
where $j=1, \ldots, n$ and $r$ tends to infinity outside of an exceptional set $E_{0}$ of finite logarithmic density. By using (\ref{eat}), the inequality (\ref{eav}) yields
\begin{eqnarray*}\label{eaw}
N\left(r,\frac{1}{\overline{f}_{j}^{[j]}}\right)
\leq N\left( r, \frac{1}{f_{j}}\right)+o\left(\frac{T_{g}(r)}{r^{1-\varsigma-\varepsilon}}\right),
\quad j=1, \ldots, n, \quad \textrm{as} \quad r \to \infty,
\end{eqnarray*}
outside of the exceptional set $E_{0}$ of finite logarithmic measure. Therefore,
\begin{eqnarray*}\label{eax}
N\left(r,\frac{1}{\widetilde{L}}\right)&-&N(r, \widetilde{L}) \nonumber\\
  &=& N\left(r,\frac{C(g_{0}, \ldots, g_{n})}{f_{0}\overline{f}_{1}\cdots
  \overline{f}_{n}^{[n]}f_{n+1}\cdots f_{q}}\right)-
  N\left(r,\frac{f_{0}\overline{f}_{1}\cdots
  \overline{f}_{n}^{[n]}f_{n+1}\cdots f_{q}}{C(g_{0}, \ldots, g_{n})}\right)  \nonumber\\
  &=& N\left(r,\frac{1}{f_{0}\overline{f}_{1}\cdots
  \overline{f}_{n}^{[n]}f_{n+1}\cdots f_{q}}\right)
  -
  N\left(r,\frac{1}{C(g_{0}, \ldots, g_{n})}\right)\nonumber\\
  &=&
  \sum_{j=0}^{n}N \left(r,\frac{1}{\overline{f}_{j}^{[j]}}\right)
  +N\left(r, \frac{1}{f_{n+1}\cdots f_{q}}\right)-N\left(r,
  \frac{1}{C(g_{0},\ldots,g_{n})} \right) \nonumber \\
  &\leq &  \sum_{j=0}^{n} N\left(r, \frac{1}{f_{j}}\right)
  +N\left(r, \frac{1}{f_{n+1}\cdots f_{q}}\right)\\
  &&- N\left(r,
  \frac{1}{C(g_{0},\ldots,g_{n})} \right)
  +o\left(\frac{T_{g}(r)}{r^{1-\varsigma-\varepsilon}} \right) \nonumber \\
  &=& N\left(r, \frac{1}{f_{0}\cdots f_{q}}\right)-N\left(r,
  \frac{1}{C(g_{0},\ldots,g_{n})} \right)
  +o\left(\frac{T_{g}(r)}{r^{1-\varsigma-\varepsilon}} \right)
  \nonumber \\
  &=&N\left(r,\frac{1}{L}\right)-N(r,L)+o\left(\frac{T_{g}(r)}{r^{1-\varsigma-\varepsilon}} \right),
  \end{eqnarray*}
where $r \to \infty$ outside of the exceptional set $E_{0}$ with finite logarithmic measure.

Next, we prove the inequality (\ref{hac}) for the auxiliary function $\widetilde{L}$. By substituting
(\ref{sec3aah}) (\ref{sec3aai}) into (\ref{eaf}), we have

\begin{eqnarray*}\label{eai}
\widetilde{L}&=&\frac{f_{0}\overline{f}_{1} \cdots \overline{f}_{n}^{[n]} f_{n+1} \cdots f_{q}}{ A_{b_{0}b_{1}\cdots b_{n}}(z)C(f_{b_{0}},\,f_{b_{1}},\,
\ldots,\, f_{b_{n}})}  \nonumber \\
&=& \frac{f_{0} \cdots f_{q} \cdot (\overline{f}_{1}/f_{1}) \cdots (\overline{f}_{n}^{[n]} /f_{n})}{A_{b_{0}b_{1}\cdots b_{n}}(z) C(f_{b_{0}},\, f_{b_{1}},\, \ldots,\, f_{b_{n}})} \nonumber \\
&=&  \frac{f_{b_{0}}\overline{f}_{b_{1}}\cdots \overline{f}_{b_{n}}^{[n]} \cdot f_{a_{0}}\cdots f_{a_{q-n-1}} \cdot (\overline{f}_{1} /f_{1}) \cdots (\overline{f}_{n}^{[n]}/f_{n})\cdot
(f_{b_{1}}/\overline{f}_{b_{1}})\cdots (f_{b_{n}}/\overline{f}_{b_{n}}^{[n]})}
{A_{b_{0}b_{1}\cdots b_{n}}(z)C(f_{b_{0}},\,f_{b_{1}},\,
\ldots,\, f_{b_{n}})} \nonumber \\
&=&  \frac{f_{a_{0}}\cdots f_{a_{q-n-1}} \cdot (\overline{f}_{1} /f_{1})\cdot (f_{b_{1}}/\overline{f}_{b_{1}})
\cdots (\overline{f}_{n}^{[n]}/f_{n})
\cdot (f_{b_{n}}/\overline{f}_{b_{n}}^{[n]})}
{\left( \frac{A_{b_{0}b_{1}\cdots b_{n}}(z)f_{0} \overline{f}_{0}\cdots \overline{f}_{0}^{[n]}C(f_{b_{0}}/f_{0},\, f_{b_{1}}/f_{0},\, \ldots,\, f_{b_{n}}/f_{0})}{f_{b_{0}}\overline{f}_{b_{1}}\cdots \overline{f}_{b_{n}}^{[n]}} \right)} \nonumber \\
&=&
 \frac{f_{a_{0}}\cdots f_{a_{q-n-1}} \cdot (\overline{f}_{1} /\overline{f}_{b_{1}})/(f_{1}/f_{b_{1}})
\cdots (\overline{f}_{n}^{[n]}/\overline{f}_{b_{n}}^{[n]})
/(f_{n}/f_{b_{n}})}
{\left( \frac{A_{b_{0}b_{1}\cdots b_{n}}(z)f_{0} \overline{f}_{0}\cdots \overline{f}_{0}^{[n]}C(f_{b_{0}}/f_{0},\, f_{b_{1}}/f_{0},\, \ldots,\, f_{b_{n}}/f_{0})}{f_{b_{0}}\overline{f}_{b_{1}}\cdots \overline{f}_{b_{n}}^{[n]}} \right)} \nonumber \\
&=&
 \frac{f_{a_{0}}\cdots f_{a_{q-n-1}} \cdot (\overline{f}_{1} /\overline{f}_{b_{1}})/(f_{1}/f_{b_{1}})
\cdots (\overline{f}_{n}^{[n]}/\overline{f}_{b_{n}}^{[n]})
/(f_{n}/f_{b_{n}})}
{\left( \frac{A_{b_{0}b_{1}\cdots b_{n}}(z) C(f_{b_{0}}/f_{0},\, f_{b_{1}}/f_{0},\, \ldots,\, f_{b_{n}}/f_{0})}{(f_{b_{0}}/f_{0})\cdot
(\overline{f}_{b_{1}}/\overline{f}_{0}) \cdots (\overline{f}_{b_{n}}^{[n]}/\overline{f}_{0}^{[n]})} \right)}
\end{eqnarray*}
Therefore,
\begin{eqnarray*}\label{eaj}
% \nonumber to remove numbering (before each equation)
 \widetilde{L}=\frac{f_{a_{0}}\cdots f_{a_{q-n-1}}}{A_{b_{0}b_{1}\cdots b_{n}}(z)G(z)},
\end{eqnarray*}
where
\begin{eqnarray}\label{eak}
% \nonumber to remove numbering (before each equation)
G(z)=\frac{\left(\frac{C(f_{b_{0}}/f_{0},\, f_{b_{1}}/f_{0},\, \ldots, \, f_{b_{n}}/f_{0})}{(f_{b_{0}}/f_{0})\cdot (\overline{f}_{b_{1}}/\overline{f}_{0})\cdots (\overline{f}_{b_{n}}^{[n]}/\overline{f}_{0}^{[n]})}\right)}
{(\overline{f}_{1}/\overline{f}_{b_{1}})/(f_{1}/f_{b_{1}})\cdots
(\overline{f}_{n}^{[n]}/\overline{f}_{b_{n}}^{[n]})/(f_{n}/f_{b_{n}})}
\end{eqnarray}
By defining
\begin{eqnarray*}\label{eal}
% \nonumber to remove numbering (before each equation)
\omega(z)=\max _{ \{b_{j}\}_{j=0}^{n}\subset \{0,\, \ldots,\, q\}}
\log |A_{b_{0}b_{1}\cdots b_{n}}(z)G(z)|,
\end{eqnarray*}
it follows that $h(z)=\log |\widetilde{L}(z)|+\omega(z)$
for any $z\in \mathbb{C}\backslash \mathfrak{A}$ such that
 $\widetilde{L}(z)$ is non-zero and finite. Thus we have

\begin{eqnarray}\label{eam}
% \nonumber to remove numbering (before each equation)
\int_{0}^{2\pi} h(re^{i\theta})d\theta=
\int_{0}^{2\pi}\log |\widetilde{L}(re^{i\theta})|d\theta +
\int_{0}^{2\pi}\omega (re^{i\theta})d\theta
\end{eqnarray}
for all positive $r$ outside of the set $$E_{\widetilde{L}}=\{r: z\in \mathbb{C}\backslash\mathfrak{A}, |z|=r, \widetilde{L}(z)=0\, \textrm{or}\, \widetilde{L}(z)=\infty \}.$$
By using a similar reasoning as in \cite[p. 451]{gundersenh:04} or in the
proof of Theorem~\ref{thm1.312}, it follows that \eqref{eam} holds for all positive $r$ outside of the exceptional set $E_{\mathfrak{A}}$. (Since $\widetilde L$ is meromorphic, there is a possibility of skipping this step by adding another exceptional set, according to Lemma~\ref{lem2.12}.)

%If $\widetilde{L}$ has infinitely many zeros or poles on the circle $\{z:|z|=r\}$ (where $r>0$), then from the identity theorem of holomorphic mappings, we have $\widetilde{L}\equiv 0$ or $\infty$, a contradiction.
%If $\widetilde{L}$ has a finite number of zeros or poles on the circle $\{z:|z|=r\}$
%(where $r>0$), please refer to \cite[p. 451]{gundersenh:04} or
%proof of Theorem~\ref{thm1.312} for details.

%Here in convenience for the readers,
 %we give the following sketch of proof.
 %For these $r$, we integrate the three integrands in \eqref{eam}
%around a curve $\gamma=\gamma(r,\delta)$ consisting of arcs of $|z|=r$ and small
%indentations of radius $\delta$ about each zero and pole of $\widetilde{L}$ on
%$|z|=r$. In this case, \eqref{eam} holds when the path of integration is
%$\gamma$ instead of  $|z|=r$. As $\delta \to 0$, on each small indentation
%the two integrands on the right-hand side of \eqref{eam} are $O(-\log \delta)$,
%and the length of the indentation is $O(\delta)$, and so the corresponding integrals
%around each indentation tend to zero. Since the whole curve $\gamma$ approaches the
%circle $|z|=r$ as $\delta \to 0$, we see that \eqref{eam} holds on $|z|=r$. Hence,

Let $\{m_{0},\, \ldots,\, m_{q-n-1} \}$ be the set of indexes
for which the maximum in (\ref{eaee}) is attained for a particular choice of $z\in \mathbb{C}\setminus \mathfrak{A}$. Then by Theorem \ref{lem2.1} it follows that
\begin{equation}\label{sec3aaj}
\log |g_{j}(z)| \leq \log|f_{m_{\nu}}(z)|+\log^{+}A(z)
\end{equation}
for all $0\leq j \leq n$ and $0\leq \nu \leq q-n-1$, and so
\begin{eqnarray}\label{ean}
% \nonumber to remove numbering (before each equation)
(q-n)T_{g}(r) &\leq& \frac{1}{2\pi}\int_{0}^{2\pi}h(re^{i\theta})d\theta+
(q-n) m(r,A(z))
\nonumber \\
&\leq& \frac{1}{2\pi}\int_{0}^{2\pi}h(re^{i\theta})d\theta+
o(T_{g}(r))
\end{eqnarray}
as $r\rightarrow \infty$ outside of the exceptional set $E_{\mathfrak{A}}$ with
finite logarithmic measure. Since the function $G$ in (\ref{eak})
contains only sums, products and quotients of fractions of the form $\overline{(f_{j}/f_{k})}^{[l]}/\overline{(f_{j}/f_{k})}^{[i]}$
where $l,i\in \{0, 1,\, \ldots,\, n\}$ satisfying $i\leq l$, and $j,k \in \{0,\, \ldots,\, q\}$, it follows by Lemmas \ref{lem2.3} and \ref{lem2.5} that
\begin{equation}\label{eao}
% \nonumber to remove numbering (before each equation)
\frac{1}{2\pi}\int_{0}^{2\pi}\omega(re^{i\theta})d\theta \leq m(r,G) + m(r,A_{m_{q-n}m_{q-n+1}\cdots m_{q}}) = o(T_{g}(r)),
\end{equation}
as $r$ approaches infinity outside of an exceptional set $E_{1}$ of
finite logarithmic measure. Finally, by Jensen's formula,
\begin{eqnarray}\label{eaq}
% \nonumber to remove numbering (before each equation)
\frac{1}{2\pi}\int_{0}^{2\pi} \log|\widetilde{L}(re^{i\theta})|d\theta=N\left(r,\frac{1}{\widetilde{L}}\right)
-N(r,\widetilde{L})+O(1)
\end{eqnarray}
as $r\rightarrow \infty$, and therefore by combining (\ref{eam}), (\ref{ean}), and (\ref{eaq}), we have
\begin{eqnarray}\label{ear}
% \nonumber to remove numbering (before each equation)
(q-n)T_{g}(r)\leq N\left(r,\frac{1}{\widetilde{L}}\right)
-N(r,\widetilde{L})+o(T_{g}(r)),
\end{eqnarray}
where $r$ approaches infinity outside of the exceptional set $E_{\mathfrak{A}}\cup E_{0} \cup E_{1}$.
Since $E_{\mathfrak{A}}$, $E_{0}$ and $E_{1}$ are all of finite logarithmic measure, their union $E_{\mathfrak{A}}\cup E_{0} \cup E_{1}$ is as well.
The assertion therefore follows by substituting  inequality \eqref{eaffff} into \eqref{ear}.
\end{proof}

%\subsection*{Acknowledgments}
%This work was supported by the NNSF of China (No.11171013 \& No.11371225), the Academy of Finland Grant No. 268009, and the Japan Society for the %Promotion of Science Grant-in-Aid for Scientific Research (C) \#22540181 and \#25400131.

\def\cprime{$'$}
\providecommand{\bysame}{\leavevmode\hbox to3em{\hrulefill}\thinspace}
\providecommand{\MR}{\relax\ifhmode\unskip\space\fi MR }
% \MRhref is called by the amsart/book/proc definition of \MR.
\providecommand{\MRhref}[2]{%
  \href{http://www.ams.org/mathscinet-getitem?mr=#1}{#2}
}
\providecommand{\href}[2]{#2}

\end{document}